\def\volno{27(1)}
\def\volyear{2020}
\def\papno{P1.45}
\renewcommand{\ge}{\geqslant}
\renewcommand{\le}{\leqslant}
\def\volno{25}\fi
\def\volyear{2018}\fi
\def\papno{P00}\fi
\newcommand{\Copyright}[1]{\def\copy@right{\copyright\thinspace\ignorespaces#1}}
\newcommand{\dateline}[3]{\def\the@dateline{Submitted: \ignorespaces#1;
    Accepted: \ignorespaces#2; Published: \ignorespaces#3\\[0.5ex]\copy@right}}
\newcommand{\MSC}[1]{\def\the@MSC{\ignorespaces#1}}
\let\old@maketitle\maketitle
\renewcommand{\maketitle}{\date{\small\the@dateline}\old@maketitle}
\renewenvironment{abstract}{%
  \small
      \begin{center}%
	{\bfseries \abstractname\vspace{-.5em}\vspace{\z@}}%
      \end{center}%
      \quotation}%
  {\par\smallskip\noindent
   \textbf{Mathematics Subject Classifications: }\the@MSC\endquotation} 
\renewcommand\title[1]{\gdef\@title{\reset@font\Large\bfseries #1}}
\renewcommand\section{\@startsection {section}{1}{\z@}%
                                   {-3.5ex \@plus -1ex \@minus -.2ex}%
                                   {2.3ex \@plus.2ex}%
                                   {\normalfont\large\bfseries}}
\renewcommand\subsection{\@startsection{subsection}{2}{\z@}%
                                     {-3ex\@plus -1ex \@minus -.2ex}%
                                     {1.5ex \@plus .2ex}%
                                     {\normalfont\normalsize\bfseries}}
\renewcommand\subsubsection{\@startsection{subsubsection}{3}{\z@}%
                                     {-2.5ex\@plus -1ex \@minus -.2ex}%
                                     {1.5ex \@plus .2ex}%
                                     {\normalfont\normalsize\bfseries}}
\renewcommand\paragraph{\@startsection{paragraph}{4}{\z@}%
                                     {2ex \@plus.5ex \@minus.2ex}%
                                     {-1em}%
                                     {\normalfont\normalsize\bfseries}}
\renewcommand\subparagraph{\@startsection{subparagraph}{5}{\parindent}%
                                     {2ex \@plus.5ex \@minus .2ex}%
                                     {-1em}%
                                     {\normalfont\normalsize\bfseries}}
\newfont{\footsc}{cmcsc10 at 8truept}
\newfont{\footbf}{cmbx10 at 8truept}
\newfont{\footrm}{cmr10 at 10truept}
\renewcommand{\ps@plain}{%
\renewcommand{\@oddfoot}{\footsc the electronic journal of combinatorics
  {\footbf\volno} (\volyear), \#\papno\hfil\footrm\thepage}}
\g@addto@macro\bfseries{\boldmath}
  \newlength{\BiblioSpacing}
  \renewenvironment{thebibliography}[1]{%
    \begin{oldthebibliography}{#1}%
      \setlength{\parskip}{\BiblioSpacing}
      \setlength{\itemsep}{\BiblioSpacing}
  }%
  {%
    \end{oldthebibliography}%
  }
\theoremstyle{plain}
\newtheorem{theorem}{Theorem}
\newtheorem{lemma}[theorem]{Lemma}
\newtheorem{corollary}[theorem]{Corollary}
\newtheorem{proposition}[theorem]{Proposition}
\theoremstyle{definition}
\theoremstyle{remark}
\newtheorem{remark}[theorem]{Remark}
\title{On unbalanced Boolean functions with\\ best correlation immunity%
\thanks{This work was funded by the Russian Science Foundation under grant 18-11-00136.}}
\author{
Denis S. Krotov \qquad  Konstantin V. Vorob'ev\\
\small Sobolev Institute of Mathematics\\[-0.8ex]
\small Novosibirsk 630090, Russia\\
\small\tt \{krotov,vorobev\}@math.nsc.ru}
\def\wt{\mathrm{wt}}
\def\VQ{{\scriptstyle\mathrm{V}}Q}
\def\VVQ{{\scriptscriptstyle\mathrm{V}}Q}
\def\EQ{{\scriptstyle\mathrm{E}}Q}
\begin{document}

\maketitle

\begin{abstract}
It is known that the order of correlation immunity of a nonconstant unbalanced Boolean function in $n$ variables cannot exceed $2n/3-1$; moreover, it is $2n/3-1$ if and only if the function corresponds to an equitable $2$-partition of the $n$-cube with an eigenvalue $-n/3$ of the quotient matrix. The known series of such functions have proportion $1:3$, $3:5$, or $7:9$ of the number of ones and zeros. We prove that if a nonconstant unbalanced Boolean function attains the correlation-immunity bound and has ratio $C:B$ of the number of ones and zeros, then $CB$ is divisible by $3$. In particular, this proves the nonexistence of equitable partitions for an infinite series of putative quotient matrices. 

We also establish that there are exactly $2$ equivalence classes of the equitable partitions of the $12$-cube with quotient matrix $[[3,9],[7,5]]$ and $16$ classes, with $[[0,12],[4,8]]$. These parameters correspond to the Boolean functions in $12$ variables with correlation immunity $7$ and proportion $7:9$ and $1:3$, respectively (the case $3:5$ remains unsolved). This also implies the characterization of the orthogonal arrays OA$(1024,12,2,7)$ and  OA$(512,11,2,6)$.
\end{abstract}

\section{Introduction}\label{s:intro}

We study unbalanced Boolean functions with the maximum possible order of correlation immunity. 
A function $f: \{0,1\}^n \to \{0,1\}$ is called \emph{unbalanced} if the number of its ones is different from $0$, $2^{n-1}$, and $2^n$.
It is called \emph{$t$-th order correlation immune} if the number of ones (equivalently, zeros)
$(x_1,\ldots,x_n): f (x_1,\ldots,x_n)=1 $ is statistically independent on the values of any $t$ arguments.
Fon-Der-Flaass \cite{FDF:CorrImmBound} proved that the correlation-immunity order of an unbalanced Boolean function in $n$ variables cannot exceed 
$2n/3 - 1$; moreover, any unbalanced Boolean function $f$ of correlation-immunity order $2n/3 - 1$ corresponds
to an equitable $2$-partition of the $n$-cube $Q_n$ with  quotient matrix $[[a,b],[c,d]]$, 
where $a+b=c+d=n$ and $a-c=-n/3$
(a formal definition can be found in Section~\ref{s:def}; 
here, it is essential that the number of ones of $f$ relates to the number of zeros as $c:b$).
Nowadays, there are three known families of quotient matrices corresponding to such functions: $[[0,3T],[T,2T]]$, $[[T,5T],[3T,3T]]$
(found in \cite{Tarannikov2000}), $[[3T,9T],[7T,5T]]$ (found in \cite{FDF:12cube.en}). 
For each of the matrices 
$[[0,3],[1,2]]$, $[[1,5],[3,3]]$, and $[[0,6],[2,4]]$, a function
is unique up to equivalence.
Kirienko \cite{kirienko2002} found that there are exactly two  inequivalent unbalanced Boolean functions in $9$ variables  attaining the bound on the order of correlation immunity (the corresponding quotient matrix is $[[0,9],[3,6]]$).   
Fon-Der-Flaass \cite{FDF:12cube.en} started the investigation of the equitable partitions of $Q_{12}$ attaining 
the correlation-immunity bound.
It was shown that equitable partitions with quotient matrix $[[1,11],[5,7]]$
do not exist, while  equitable partitions with quotient matrix $[[3,9],[7,5]]$ were built
(see the construction in Section~\ref{s:3975}).
These results were also important from the framework of the study of parameters
of equitable $2$-partitions of the $n$-cube: 
they closed the smallest open cases remaining after the general paper \cite{FDF:PerfCol}.
After that, all quotient matrices of equitable $2$-partitions of the $n$-cube were characterized
for any $n$ smaller than $24$.
For $n=24$, the remaining questionable matrices were $[[1,23],[9,15]]$,  $[[2,22],[10,14]]$,  $[[3,21],[11,13]]$,  $[[5,19],[13,11]]$,  $[[7,17],[15,9]]$, and it is notable that all these parameters
correspond to unbalanced Boolean functions with extreme order of correlation immunity, $15=2n/3-1$.    

In the present work, 
we prove a new property of the equitable partitions that meet the 
correlation-immunity bound with equality. 
In particular, our results imply the nonexistence of an equitable partition with 
quotient matrix $[[2,22],[10,14]]$ or $[[5,19],[13,11]]$, as well 
as any Boolean function with correlation immunity $2n/3-1$ and proportion between 
the number of ones and the number of zeros $5:11$, $13:19$, or any $C:B$ such that 
$CB$ is not divisible by $3$. 
Besides that, we provide a characterization of all inequivalent equitable 
partitions with the quotient matrices $[[3,9],[7,5]]$ and $[[0,12],[4,8]]$.

From the theoretical point of view, studying Boolean functions lying on 
the correlation-immunity bound with different proportions of the number of ones and zeros
is the most intriguing part of our research. 
On the other hand, 
the functions of correlation-immunity order $2n/3-1$ with $2^{n-2}$ ones are of special interest 
because of the following two connections,
and our classification related with the quotient matrix $[[0,12],[4,8]]$
makes a contribution to their study.

The first connection is with $t$-resilient functions. 
A function $f:\{0,1\}^n\to \{0,1\}^m$ is called \emph{$t$-resilient} 
if for every $\bar a$ from $\{0,1\}^m$ the function 
\begin{equation}\label{eq:fa}
 f_{\bar a}(\bar x) = \left\{\begin{array}{ll} 
1 & \mbox{ if } f(\bar x)= \bar a\\
0 & \mbox{ if } f(\bar x)\ne\bar a
\end{array}\right.
\end{equation}
is correlation immune of order $t$ with $2^{n-m}$ ones.
The resilient functions are important for applications in cryptography, see e.g. \cite{Carlet:vect}.
If $m=2$, then $t\le 2n/3 -1$ \cite{Friedman:92}. If $m=2$ and $t = 2n/3 -1$, 
then the functions $f_{\bar a}$ belong to the class of functions we study and correspond to the equitable
partitions of $Q_n$ with quotient matrix $[[0,3T],[T,2T]]$, \nolinebreak $T=n/3$.

The second connection is with orthogonal arrays. 
An \emph{orthogonal array} OA$(N,n,2,t)$ (we consider only the binary orthogonal arrays) is a multiset of $N$ vertices on the $n$-cube such that
the number of its elements $(x_1,\ldots,x_n)$ with prescribed values in any $t$ positions
does not depend on those values, see e.g. \cite{HSS:OA}.
(Often, the elements of an orthogonal array 
are considered as being arranged as the rows or the columns 
of an $N\times n$ or $n\times N$ array, 
respectively).
An orthogonal array is \emph{simple} if it is an ordinary set, without multiplicities more than one.
It is straightforward that the simple OA$(N,n,2,t)$ 
are in one-to-one correspondence 
with the Boolean functions 
$\{0,1\}^n\to\{0,1\}$
of correlation-immunity order $t$ with $N$ ones (actually, the set of ones of such function forms the corresponding OA$(N,n,2,t)$). A result  of Bierbrauer \cite[Theorem~1]{Bierbrauer:95} says for OA$(N,n,q,t)$ that
\begin{equation}\label{eq:bier}
 N\ge q^n\left(1-\frac{(q-1)n}{q(t+1)}\right); 
\end{equation}
moreover, for a non-simple array the inequality becomes strict, which is straightforward from the proof, see \cite[p.\,181, line 4]{Bierbrauer:95} (note that for the simple binary orthogonal arrays, the bound was proved earlier by Friedman \cite[Theorem~2.1]{Friedman:92}).
The arrays OA$(2^{n-2},n,2,2n/3-1)$ lie on this bound;
hence, they are simple and, 
as follows from the results of \cite{FDF:CorrImmBound},
correspond to the equitable partitions with quotient matrix $[[0,3T],[T,2T]]$, $T=n/3$.
In particular, the results of our classification imply that there are 
exactly $16$ inequivalent 
OA$(1024,12,2,7)$ and exactly $37$ inequivalent OA$(512,11,2,6)$
(such arrays are obtained from OA$(1024,12,2,7)$ by shortening). 
The classification of orthogonal arrays with given parameters is a problem 
that attracts attention of many researchers, see the recent works \cite{BMS:2017:few}, \cite{BulRy:2018}, and the bibliography there. 
We note that the preceding computational results were successful 
for smaller $t$ and $N$ than in our case.
Futher discussion on the quotient matrix $[[0,3T],[T,2T]]$, related
equitable partitions, orthogonal arrays, and Boolean functions
can be found in~\cite{Krotov:ISIT2019:Resilient}.

With the other parameters considered in our paper, the situation is different.
The Fon-Der-Flaass bound was generalized to the binary
orthogonal arrays by Khalyavin \cite{Khalyavin:2010.en}, 
who proved that
any OA$(N,n,2,t)$ with $N<2^{n-1}$ satisfies $t\le 2n/3-1$. 
However, this does not mean that any array lying on this bound is simple
(e.g., there is a non-simple OA$(24,6,2,3)$ \cite{Taran:private2018}). 
The classification of non-simple orthogonal arrays that meet the Fon-Der-Flaass--Khalyavin bound is a separate problem, which is not considered in the current research.

\medskip

The introductory part of the paper continues with definitions and basic facts (Section~\ref{s:def}) and Section~\ref{s:tools}, where we describe the computer tools used for the classification results.
The main theoretical results of the paper are proved in Section~\ref{s:3-divisible}. 
Theorem~\ref{th_uniform_coordinates} states 
(in an equivalent formulation) that
if the correlation-immunity order of an unbalanced Boolean function $f$ lies on the Fon-Der-Flaass bound, then the number of ones of the derivative
$f^{(i)}(\bar x)=f(\bar x)+f(\bar x+\bar e_i)$ of $f$ in any basic direction $\bar e_i$, $i=1,\ldots,n$, does not depend on the direction. As a consequence, we have a new necessary condition on the existence of such functions and corresponding equitable $2$-partitions (Corollary~\ref{le_uniform_coordinates}).
Section~\ref{s:3975} 
contains the characterization of 
inequivalent equitable $2$-partitions of the $12$-cube 
with quotient matrix $[[3,9],[7,5]]$, 
based on the combination of theoretical and computational results, and a description of the original Fon-Der-Flaass construction \cite{FDF:12cube.en} of such partitions, including the representation via the Fourier transform.  
In Section~\ref{s:01248}, 
we describe the computational classification
of the equitable partitions of the $12$-cube
with quotient matrix $[[0,12],[4,8]]$.
The list of the all $16$ inequivalent partitions is given in the appendix. As we mentioned above, the last partitions correspond to the order-$7$ correlation immune   Boolean functions in $12$ variables with $2^{10}$ ones, and to the orthogonal arrays OA$(2^{10},12,2,7)$. In Section~\ref{s:21066}, we briefly discuss equitable partitions with quotient matrix $[[2,10],[6,6]]$ and the connection of such partitions with orthogonal
arrays OA$(1536,13,2,7)$.

\section{Definitions and basic facts}\label{s:def}
Let $G=(V,E)$ be an undirected graph. A partition $(C_0,\ldots, C_{\tau-1})$ 
of the vertex set $V$ 
into $\tau$ \emph{cells}
is called an \emph{equitable partition} 
(\emph{equitable $\tau$-partition})
with {\it quotient matrix} $M=(m_{ij})$ 
if for all $i,j \in \{0, \ldots, \tau-1\}$ any vertex of $C_i$ 
has exactly $m_{ij}$ neighbors in $C_j$.
Two partitions $(C_0,\ldots, C_{\tau-1})$ and $(C'_0,\ldots, C'_{\tau-1})$ of  $V$ 
are \emph{equivalent}
if there is a graph automorphism $\pi$ such that 
$\pi(C_i) \in \{C'_0,\ldots, C'_{\tau-1}\}$ for every 
$i$ from $\{0,\ldots,{\tau-1}\}$.
 
{The \emph{$n$-cube} $Q_n=(\VQ_n,\EQ_n)$} (also known as the Hamming graph $H(n,2)$)
is a graph whose vertices are the words of length $n$ over the alphabet $\{0, 1\}$,
also treated as vectors over the binary field GF($2$). 
Two vertices are adjacent if and only if they differ in exactly one coordinate position,
which is referred to as the \emph{direction} of the corresponding edge.
The \emph{Hamming distance} $d(\bar x,\bar y)$ between vertices $\bar x$ and $\bar y$ 
is the number of coordinates in which they differ.
The \emph{weight} $\wt(\bar x)$ of a word $\bar x$ is the number of ones in it.
By $(\bar x,\bar y)$, we denote the ordinary inner product of vectors: $(\bar x,\bar y)=x_1y_1+x_2y_2+\dots +x_ny_n$. 
For two vertices $\bar x=(x_1,\ldots,x_n)$, $\bar y=(y_1,\ldots,y_n)$, 
we will write $\bar x\preccurlyeq \bar y$ 
if $x_i \le y_i$ for all $i$ from $1$ to $n$.
We denote by $\bar e_i$ the word with all zeros except one $1$ in the $i$-th position; 
by $\overline 0$ and $\overline 1$, the all-zero and all-one words, respectively.  
For $\bar x, \bar y\in \VQ_n$, 
the set 
$\Gamma_{\bar x}^{\bar y}=\{\bar z+\bar y:\bar z \preccurlyeq \bar x\}$ 
is a {\it $k$-face} of $Q_n$,
where $k=\wt(\bar x)$
is the dimension of the face.

Any equitable $2$-partition of $Q_n$ satisfies the following necessary conditions
on the coefficients of its quotient matrix $[[a,b],[c,d]]$ \cite{FDF:PerfCol}, \cite{FDF:CorrImmBound}:
\begin{enumerate}
 \item[(a)]
 $a$, $b$, $c$, $d$ are nonnegative integers such that 
 $a+b=c+d=n$, $b>0$, $c>0$;
 \item[(b)] $\displaystyle\frac{b+c}{\gcd(b,c)}$ is a power of $2$;
 \item[(c)] if $b\ne c$, then $\displaystyle a-c\ge-\frac{n}{3}$.
\end{enumerate}
Condition (c) is a special case
of the bound $t\le 2n/3-1$
on the order $t$ of correlation immunity 
of an unbalanced Boolean function \cite{FDF:CorrImmBound}.
We say that an equitable $2$-partition with quotient matrix 
$[[a,b],[c,d]]$
\emph{attains the correlation-immunity bound}
if $b\ne c$ and $a-c = -{n}/{3}$ (equivalently, $b+c=4n/3$).
Besides (a)--(c), the only matrix for which the nonexistence
of the corresponding equitable $2$-partitions of $Q_n$ was established,
before the current research,
 was $[[1,11],[5,7]]$
 (usually, we also agree that $b\ge c$, because this can always be reached by choosing the order of cells), see \cite{FDF:12cube.en}.
 In Section~\ref{s:3-divisible}, we will prove a new necessary condition,
 which rejects the matrix $[[1,11],[5,7]]$ as well as an infinite number
 of other matrices satisfying (a)--(c).

Some of our results are formulated in terms of real-valued functions
defined on the vertices of the $n$-cube. 
Two such functions $f_1,f_2: \VQ_n\rightarrow \mathbb{R}$ are {\it equivalent} if there is a permutation $\pi$ of $n$ coordinate positions and a vector $\bar y$ such that $f_1(\bar y + \pi \bar x)=f_2(\bar x)$ for all $\bar x\in \VQ_n$. The \emph{norm} of a function $f$ is $\|f\|=(\sum_{\bar y\in \VVQ_n}{f(\bar y)^2})^\frac12$.

Given an eigenvalue $\lambda$ of the adjacency matrix of a graph $G=(V,E)$, a function $f:V \rightarrow \mathbb{R}$ is called an {\it eigenfunction} or a {\it $\lambda$-eigenfunction} of $G$ if it is not constantly zero and for every $x\in V$  $$\lambda\cdot f(x)=\sum_{y\in V:(x,y)\in E}f(y).$$ Note that the tuple of values of a $\lambda$-eigenfunction is essentially an eigenvector of the adjacency matrix of $G$ corresponding to the eigenvalue $\lambda$.

It is well known and easy to check that the eigenspectrum of $Q_n$ is $\{\lambda_i(n)=n-2i:\, i=0,1,\ldots,n\}$ and the set of functions $\{\chi_{\bar y}(\bar x)=(-1)^{(\bar x,\bar y)}:\, \wt(\bar y)=i\}$ is an orthogonal basis of the $\lambda_i(n)$-eigenspace of $Q_n$ for $i=0,1,\ldots, n$. 
Therefore, for a function $f$ defined on $\VQ_n$, the following identity holds $$f(\cdot)=\sum_{\bar y\in \VVQ_n}{\widehat{f}(\bar y)\chi_{\bar y}}(\cdot), $$
where $$\widehat{f}(\bar y)=\frac{1}{2^n}\sum_{\bar z\in \VVQ_n}{f(\bar z)(-1)^{(\bar z,\bar y)}}$$ 
is a {\it Fourier coefficient}, $\bar y\in \VQ_n$. 
By the weight of the coefficient $\widehat{f}(\bar y)$, we will understand the weight of $\bar y$.
The next properties of the basis $\{\chi_{\bar y}:\,\bar y\in \VQ_n\}$ follow instantly from its definition.
   \begin{proposition}\label{basis}
   For $\bar x,\bar y\in \VQ_n$ the following equalities hold:
   \begin{itemize}
    \item[\rm(i)] $\chi_{\overline 0}\equiv 1$,
         
        \item[\rm(ii)] $\chi_{\bar x}\chi_{\bar y}=\chi_{\bar x+\bar y}$.
   \end{itemize}
   \end{proposition}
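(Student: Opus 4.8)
The plan is to unwind the definition $\chi_{\bar y}(\bar x)=(-1)^{(\bar x,\bar y)}$ directly; no auxiliary machinery is needed. For part~(i), I would simply note that the ordinary inner product $(\bar x,\overline 0)=\sum_i x_i\cdot 0$ vanishes for every $\bar x\in\VQ_n$, hence $\chi_{\overline 0}(\bar x)=(-1)^0=1$ identically, which is the asserted equality $\chi_{\overline 0}\equiv 1$.

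For part~(ii), I would evaluate the product pointwise: for each $\bar z\in\VQ_n$,
\[
\chi_{\bar x}(\bar z)\,\chi_{\bar y}(\bar z)=(-1)^{(\bar z,\bar x)}(-1)^{(\bar z,\bar y)}=(-1)^{(\bar z,\bar x)+(\bar z,\bar y)}=(-1)^{\sum_i z_i(x_i+y_i)}.
\]
The only point that calls for a word of care is that the vector $\bar x+\bar y$ appearing on the right-hand side of the claimed identity is the sum in $\mathrm{GF}(2)$, while the exponent above involves the integer sum $x_i+y_i$; but $(-1)^{(\,\cdot\,)}$ depends only on the residue modulo $2$, and $x_i+y_i\equiv x_i\oplus y_i\pmod 2$, so the exponent is congruent modulo $2$ to $(\bar z,\bar x+\bar y)$ and the displayed quantity equals $(-1)^{(\bar z,\bar x+\bar y)}=\chi_{\bar x+\bar y}(\bar z)$. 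Since $\bar z$ was arbitrary, $\chi_{\bar x}\chi_{\bar y}=\chi_{\bar x+\bar y}$. There is no genuine obstacle here: the statement is essentially the additivity of the exponent map combined with the observation that the two notions of coordinatewise addition agree modulo $2$, and the "hard part"—to the extent there is one—is merely to keep those two notions distinct in the bookkeeping.
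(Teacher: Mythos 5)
Your proof is correct and is exactly the ``follows instantly from the definition'' argument the paper intends: it states Proposition~\ref{basis} without proof, and your direct unwinding of $\chi_{\bar y}(\bar x)=(-1)^{(\bar x,\bar y)}$, together with the careful remark that integer and GF$(2)$ coordinatewise sums agree modulo $2$, supplies precisely the omitted verification.
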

   
We will need the following well-known properties of the basis functions.
\begin{proposition}
\begin{itemize}
    \item
[\rm(i)] For every $k\in\{0,\ldots,n-1\}$, every 
$\bar y\in \VQ_n$ of weight $n-k$, 
and every $(k+1)$-face $\Gamma$, it holds
$$\sum_{\bar x\in \Gamma}{\chi_{\bar y}(\bar x)}=0;$$
\item
[\rm(ii)] \textup{(see, e.g., \cite[Ch.\,5, Lemma~2]{MWS})} for every $\bar x$ and $\bar y$ from $\VQ_n$, it holds 
\begin{equation}\label{eq:Fur}
2^{n-\wt(\bar x)}\sum_{\bar z \preccurlyeq \bar x}{\widehat \chi_{\bar y}(\bar z)}
= 
\sum_{\bar z \preccurlyeq \bar x + \overline1}{\chi_{\bar y}(\bar z)}.
\end{equation}
   \end{itemize}
\label{p:sarkar}
      \end{proposition}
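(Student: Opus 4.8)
The plan is to prove both parts by expanding the characters through their definition $\chi_{\bar y}(\bar x)=(-1)^{(\bar x,\bar y)}$ and factoring every resulting sum over coordinate positions; nothing deeper than the elementary identity $\sum_{\bar z\preccurlyeq\bar v}(-1)^{(\bar z,\bar y)}=\prod_{i:\,v_i=1}\bigl(1+(-1)^{y_i}\bigr)$ will be needed.

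For (i), I would write the given $(k+1)$-face as $\Gamma=\Gamma_{\bar v}^{\bar u}$ with $\wt(\bar v)=k+1$, pull the shift by $\bar u$ out as a global sign $(-1)^{(\bar u,\bar y)}$, and observe that $\sum_{\bar z\preccurlyeq\bar v}(-1)^{(\bar z,\bar y)}$ factors as $\prod_{i:\,v_i=1}\bigl(1+(-1)^{y_i}\bigr)$, whose $i$-th factor equals $0$ exactly when $y_i=1$. The crux is then a dimension count: $\bar y$ has $\wt(\bar y)=n-k$ ones, whereas only $n-(k+1)=n-k-1$ positions carry a zero of $\bar v$, so $\bar y$ is forced to have a one at some position where $\bar v$ has a one; that factor vanishes, hence so does the whole sum. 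This counting step is the only real content of part (i), and it is exactly where the hypotheses $\wt(\bar y)=n-k$ and $\dim\Gamma=k+1$ enter.

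For (ii), I would first record that the Fourier coefficient of a single basis function is a Kronecker delta: by orthogonality of the characters, $\widehat{\chi_{\bar y}}(\bar z)=2^{-n}\sum_{\bar w\in\VVQ_n}(-1)^{(\bar w,\bar y+\bar z)}$ is $1$ if $\bar z=\bar y$ and $0$ otherwise, so the left-hand side of \eqref{eq:Fur} equals $2^{n-\wt(\bar x)}$ when $\bar y\preccurlyeq\bar x$ and $0$ otherwise. For the right-hand side, I would note that $\bar z\preccurlyeq\bar x+\overline1$ means precisely that $\bar z$ is supported among the $n-\wt(\bar x)$ positions where $\bar x$ has a zero, and then factor $\sum_{\bar z\preccurlyeq\bar x+\overline1}(-1)^{(\bar z,\bar y)}$ as $\prod_{i:\,x_i=0}\bigl(1+(-1)^{y_i}\bigr)$; this product is nonzero precisely when $y_i=0$ for every $i$ with $x_i=0$, i.e.\ precisely when $\bar y\preccurlyeq\bar x$, in which case it equals $2^{n-\wt(\bar x)}$. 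The two sides thus coincide, establishing \eqref{eq:Fur}.

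I do not expect a genuine obstacle: part (i) is the familiar fact that a character of weight $n-k$ sums to zero over every $(k+1)$-face of $Q_n$, and part (ii) is a standard Fourier/face duality; once the sums are written out, the rest is elementary coordinate bookkeeping. The one point that must not be glossed over is keeping the strict inequality $n-k-1<n-k$ in part (i), since that is precisely what forces the face sum to vanish.
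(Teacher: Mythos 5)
Your proposal is correct and follows essentially the same route as the paper: part (i) rests on the same pigeonhole observation that $\bar y$ must have a one in a position where the face varies (the paper phrases the resulting cancellation as pairing vertices along direction $j$, you phrase it as a vanishing factor in the product $\prod_{i}\bigl(1+(-1)^{y_i}\bigr)$, which is the same computation), and part (ii) uses the identical Kronecker-delta evaluation of $\widehat{\chi_{\bar y}}$ on the left and the identical dichotomy $\bar y\preccurlyeq\bar x$ versus not on the right. No gaps.
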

\begin{proof}

(i) Let $\Gamma=\Gamma_{\bar z}^{\bar z'}$ for some words $\bar z$ 
of weight $k+1$ and $\bar z'$. Since $\wt(\bar y)=n-k$, there is some coordinate position $j$ where $\bar z$ and $\bar y$ both have $1$. Thus, for every $\bar x\in \Gamma$, we have $\chi_{\bar y}(\bar x)+\chi_{\bar y}(\bar x+\bar e_j)=0$.

(ii) By the definition of a Fourier coefficient, $\widehat \chi_{\bar y}(\bar z)$ equals $1$ if $\bar z=\bar y$ and $0$ otherwise. Thus, the left side of (\ref{eq:Fur}) is equal to $2^{n-\wt(\bar x)}$ if $\bar y\preccurlyeq \bar x$ and zero otherwise. In the right side,
we also have 
$\sum_{\bar z \preccurlyeq \bar x + \overline1} 1 = 2^{n-\wt(\bar x)}$
if $\bar y\preccurlyeq \bar x$. 
In the remaining case 
$\bar y \succ \bar x$,
we have $0$ by arguments similar to (i).
\end{proof}

For a given set $V$ of $v$ elements, a {\it $(t,k,v)$-covering}, 
$t\le k\le v$, is a set $S$ of $k$-subsets of $V$ such that 
for every $t$-subset $T$ of $V$ there exists $K$ from $S$ such that $T\subseteq K$.
The following facts are trivial and well known, see e.g. \cite{Stinson:coverings}.

\begin{proposition}\label{coverings}
   If $S$ be a $(t,k,v)$-covering of a set $V$ of size $v$, then
\begin{itemize}
 \item 
   [\rm (i)] $|S|\ge   \frac{\binom{v}{t}}{\binom{k}{t}}$\textup;
    \item
   [\rm (ii)] for every $a\in V$, the set $S_{a}=\{K\backslash{\{a\}}:\, a\in K\in S\}$ is a $(t-1,k-1,v-1)$-covering of $V\setminus \{a\}$.
\end{itemize}
\end{proposition}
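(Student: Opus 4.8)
The plan is to prove both parts by elementary double counting, exactly as one proves the Schönheim-type bound.

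For (i), I would count incident pairs $(T,K)$ where $T$ is a $t$-subset of $V$, $K\in S$, and $T\subseteq K$. On one hand, each $K\in S$ has exactly $k$ elements, hence contains exactly $\binom{k}{t}$ subsets of size $t$, so the number of such pairs equals $|S|\binom{k}{t}$. On the other hand, by the covering property every one of the $\binom{v}{t}$ subsets $T$ of size $t$ is contained in at least one $K\in S$, so the number of pairs is at least $\binom{v}{t}$. Combining, $|S|\binom{k}{t}\ge\binom{v}{t}$, and since $t\le k$ guarantees $\binom{k}{t}>0$, we may divide to obtain $|S|\ge\binom{v}{t}/\binom{k}{t}$.

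For (ii), fix $a\in V$. Every $K\in S$ containing $a$ satisfies $|K\setminus\{a\}|=k-1$ and $K\setminus\{a\}\subseteq V\setminus\{a\}$, so $S_a$ is a family of $(k-1)$-subsets of the $(v-1)$-element set $V\setminus\{a\}$. To check the covering property, take an arbitrary $(t-1)$-subset $T'$ of $V\setminus\{a\}$; then $T'\cup\{a\}$ is a $t$-subset of $V$, so by the hypothesis there is a $K\in S$ with $T'\cup\{a\}\subseteq K$. This $K$ contains $a$, hence $K\setminus\{a\}\in S_a$, and $T'\subseteq K\setminus\{a\}$. Thus every $(t-1)$-subset of $V\setminus\{a\}$ is covered, so $S_a$ is a $(t-1,k-1,v-1)$-covering.

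I do not anticipate any real obstacle: the argument is purely combinatorial bookkeeping. The only points worth keeping in mind are that $t\le k$ makes the denominator in (i) positive (so the division is legitimate), and that part (ii) tacitly assumes $t\ge1$ so that $(t-1)$-subsets and the construction $T'\cup\{a\}$ make sense.
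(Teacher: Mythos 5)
Your proof is correct. The paper itself gives no proof of this proposition---it is stated as ``trivial and well known'' with a reference to Stinson's handbook chapter on coverings---and your double-counting argument for (i) and the lift $T'\mapsto T'\cup\{a\}$ for (ii) are exactly the standard arguments found there, including the two side remarks (positivity of $\binom{k}{t}$ from $t\le k$, and the implicit assumption $t\ge 1$ in (ii)).
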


Given an equitable $2$-partition $(C_0,C_1)$ of $Q_n$ with quotient matrix $[[a,b],[c,d]]$,
by its \emph{associated function} we will understand the function $f:\VQ_n\rightarrow \mathbb{R}$ defined as follows: 
 
 $$f(\bar x)=\begin{cases}
 b, & \bar x\in C_0\\
 -c, & \bar x\in C_1.
 \end{cases}$$

\begin{lemma}[\cite{FDF:CorrImmBound,FDF:12cube.en}]\label{FourierSystem}
Let $(C_0,C_1)$ be an equitable $2$-partition of $Q_n$ with quotient matrix $[[a,b],[c,d]]$ and associated function $f:\VQ_n\rightarrow \mathbb{R}$.
 Then the following identities take place:
 \begin{eqnarray*}
   \widehat f(\bar x)&=&0 \qquad \mbox{for all $\bar x$ such that }\wt(\bar x)\neq \frac{b+c}{2}, \\
   (b-c)\widehat f(\bar x) &=& 
   \!\!\!\sum_{\bar y,\bar z:\,\bar y+\bar z=\bar x}\!\!\!
   \widehat f(\bar y)\widehat f(\bar z) \qquad \mbox{for all  } \bar x\neq \overline 0,\\
   bc&=& \sum_{\bar y}{\widehat f(\bar y)^2}.
  \end{eqnarray*}

\end{lemma}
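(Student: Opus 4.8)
The plan is to exhibit the associated function $f$ as an eigenfunction of $Q_n$, then extract a quadratic functional identity from the fact that $f$ takes only the two values $b$ and $-c$, and finally obtain all three claimed identities by comparing Fourier coefficients of both sides of that identity.

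\textbf{Eigenfunction property.} Write $g$ for the indicator function of $C_0$, so that $f=(b+c)g-c$ (it equals $b$ on $C_0$ and $-c$ on $C_1$). For $\bar x\in C_0$ the definition of an equitable partition gives $\sum_{\bar y\sim\bar x}g(\bar y)=a$, hence $\sum_{\bar y\sim\bar x}f(\bar y)=(b+c)a-cn$, which after substituting $n=a+b$ equals $b(a-c)=(a-c)f(\bar x)$. For $\bar x\in C_1$ one has $\sum_{\bar y\sim\bar x}g(\bar y)=c$, hence $\sum_{\bar y\sim\bar x}f(\bar y)=(b+c)c-cn=c(b-d)$; since $a+b=c+d$ gives $b-d=c-a$, this equals $c(c-a)=(a-c)f(\bar x)$. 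Thus $f$ (which is not identically zero, as $b>0$) is an eigenfunction of $Q_n$ with eigenvalue $a-c$. Because $\{\chi_{\bar y}:\wt(\bar y)=i\}$ is a basis of the $\lambda_i(n)$-eigenspace and the eigenspaces are in direct sum, while $a-c=\lambda_i(n)=n-2i$ forces $i=(n-a+c)/2=(b+c)/2$, the expansion $f=\sum_{\bar y}\widehat f(\bar y)\chi_{\bar y}$ involves only terms of weight $(b+c)/2$. This is the first identity; in particular $\widehat f(\overline 0)=0$.

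\textbf{Quadratic identity and Fourier transform.} Next I would check the pointwise identity $f^2=(b-c)f+bc$: on $C_0$ the right-hand side is $(b-c)b+bc=b^2$ and on $C_1$ it is $-(b-c)c+bc=c^2$, matching $f^2$ in both cases. Squaring the expansion of $f$ and using $\chi_{\bar y}\chi_{\bar z}=\chi_{\bar y+\bar z}$ (Proposition~\ref{basis}) gives $f^2=\sum_{\bar x}\big(\sum_{\bar y+\bar z=\bar x}\widehat f(\bar y)\widehat f(\bar z)\big)\chi_{\bar x}$, so $\widehat{f^2}(\bar x)=\sum_{\bar y+\bar z=\bar x}\widehat f(\bar y)\widehat f(\bar z)$. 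On the other hand, using $\chi_{\overline 0}\equiv 1$ and $\widehat f(\overline 0)=0$, the identity $f^2=(b-c)f+bc$ yields $\widehat{f^2}(\bar x)=(b-c)\widehat f(\bar x)$ for $\bar x\ne\overline 0$ and $\widehat{f^2}(\overline 0)=bc$. Equating the two expressions for $\widehat{f^2}(\bar x)$ with $\bar x\ne\overline 0$ gives the second identity. For $\bar x=\overline 0$ the convolution sum collapses to $\sum_{\bar y}\widehat f(\bar y)^2$ (over $\mathrm{GF}(2)$, $\bar y+\bar z=\overline 0$ iff $\bar z=\bar y$), which therefore equals $bc$ — the third identity.

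I do not expect a genuine obstacle; the lemma is essentially bookkeeping once the right quadratic identity is spotted. The only place to be careful is the eigenfunction step, where both relations $a+b=n$ and $c+d=n$ must be used to make the ratio $\big(\sum_{\bar y\sim\bar x}f(\bar y)\big)/f(\bar x)$ come out to the same constant $a-c$ on $C_0$ as on $C_1$. As an independent sanity check on the third identity, one may instead apply Parseval's identity $\sum_{\bar y}\widehat f(\bar y)^2=2^{-n}\|f\|^2$ together with the edge-count relation $b|C_0|=c|C_1|$ and $|C_0|+|C_1|=2^n$, which give $\|f\|^2=b^2|C_0|+c^2|C_1|=bc\cdot 2^n$.
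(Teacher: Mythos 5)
Your proposal is correct and follows essentially the same route as the paper: establish that $f$ is an eigenfunction with eigenvalue $a-c=n-b-c$ (hence its Fourier support lies in weight $(b+c)/2$), then expand the quadratic relation $(f-b)(f+c)=0$, equivalently $f^2=(b-c)f+bc$, in the basis $\{\chi_{\bar y}\}$ and compare coefficients. The only additions are the explicit neighbour count and the Parseval cross-check, both of which are fine.
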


\begin{proof}
	Counting  the values of $f$ over the neighbours of a given vertex, we find that $f$ is an $(n-b-c)$-eigenfunction of $Q_n$. Thus, all its nonzero Fourier coefficients have weight $\frac{b+c}{2}$.
	By the definition of the associated function, we know that $(f-b)(f+c)=0$.
	Therefore,  $$\left( \sum_{\bar y\in \VVQ_n}{\widehat{f}(\bar y)\chi_{\bar y}}-b\chi_{\overline 0}\right) \left( \sum_{\bar y\in \VVQ_n}{\widehat{f}(\bar y)\chi_{\bar y}}+c\chi_{\overline 0}\right) =0.$$ After removing parentheses and using Proposition \ref{basis}, we obtain the remaining equalities.
\end{proof} 

The \emph{kernel} of an equitable $2$-partition $C=(C_0,C_1)$ is the set 
$$\mathrm{ker}(C)=\{\bar y\in \VQ_n:  C_0=C_0+\bar y \}=\{\bar y\in \VQ_n: f(\bar x+\bar y)=f(\bar x) \mbox{ for all } \bar x \in \VQ_n \}$$ of all periods of the cells or, equivalently, of the associated function $f$.

\section{Computational tools}\label{s:tools}
\paragraph{Exact covering.} The approaches we apply for enumerating equitable partitions of $Q_{12}$ (the approaches are completely different for the quotient matrices $[[3,9],[7,5]]$ and $[[0,12],[4,8]]$) 
include solving instances of the exact covering problem.
In general, the exact covering problem can be formulated as follows.
Given elements $a_1$, \ldots, $a_k$,
natural numbers 
$\alpha_1$, \ldots, $\alpha_k$, 
and a collection 
$\mathcal A = \{A_1$, \ldots, $A_m\}$ 
of subsets of the set
$\{a_1$, \ldots, $a_k\}$,
find a subcollection $\mathcal A'$ 
of $\mathcal A$ such that each element
$a_i$ is contained in exactly $\alpha_i$
sets from $\mathcal A'$.
Most mathematical packages include methods 
for finding an exact cover in the case
$\alpha_1=\dots=\alpha_k=1$, 
which is solved much effectively in practice than the general problem.
However, our approaches need finding 
exact covers with different multiplicities.
We exploited the \texttt{libexact} package \cite{KasPot08}, which can be used in \texttt{c/c++} programs.

\paragraph{Isomorphism.}
To find the number of equivalence classes of $2$-partitions of the vertices of $Q_n$ from a considered class, or any intermediate objects, we use the standard technique described in  \cite[Sect.\,3.3]{KO:alg}. Namely, sets of vertices of $Q_n$ are represented by graphs in such a manner that two objects are equivalent if and only if the corresponding graphs are isomorphic. 
A famous package to work with the graph isomorphism 
is \texttt{nauty} \cite{nauty2014}.
The same approach allows to find the automorphism group of any object we deal with.

\paragraph{Double counting.}
The following nice approach, described in \cite[Sect.\,10.2]{KO:alg}, allows to partially validate the results of the exhaustive search.
Assume that we have finished the classification 
of some objects and have found 
a representative of every equivalence class. 
Knowing the order of the automorphism group 
of each representative, 
we can calculate the total number 
of different objects. 
If this number does not coincide 
with the number of objects 
found by the exhaustive search, 
then the search was erroneous.
This approach catches many kinds of 
systematic and random mistakes, 
but only works if the result 
of the search is not empty.
We checked the results of every step 
of our classification by this 
double-counting method.

\section{New necessary condition}\label{s:3-divisible}

In this section we provide a new necessary condition of the existence of equitable $2$-partitions of $Q_n$ attaining the bound \cite{FDF:CorrImmBound} on correlation immunity.
Given an equitable partition of a $n$-cube, we will say that an edge of the graph is \emph{composite} if it is incident to vertices from different cells of the partition.

\begin{theorem}\label{th_uniform_coordinates}
Let $(C_0,C_1)$ be an equitable partition of $Q_n$ with quotient matrix $[[a,b],[c,d]]$, $b\neq c$, attaining the correlation-immunity bound, i.e., $a-c=-\frac n3$. Let $f:\VQ_n\rightarrow \mathbb{R}$ be the function associated to this partition. The following statements are true:
\begin{itemize}
 \item[\rm (i)] the value $\displaystyle\sum_{\bar x:\,x_i=0} \widehat f(\bar x)^2$ does not depend on $i\in \{1,\ldots, n\}$\textup;
  \item[\rm (ii)] the number of composite edges of direction $i$
  does not depend on $i\in \{1,\ldots, n\}$.
\end{itemize}
\end{theorem}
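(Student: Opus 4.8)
The plan is to reduce~(ii) to~(i) by a short Fourier computation and then to prove~(i) by squaring the quadratic identity of Lemma~\ref{FourierSystem} and exploiting the full symmetry of the resulting triple sum. For the reduction: since $\chi_{\bar y}(\bar x+\bar e_i)=(-1)^{y_i}\chi_{\bar y}(\bar x)$ by Proposition~\ref{basis}, the discrete derivative in direction $i$ equals $f(\bar x)-f(\bar x+\bar e_i)=2\sum_{\bar y:\,y_i=1}\widehat f(\bar y)\chi_{\bar y}(\bar x)$. Squaring, summing over $\bar x\in\VQ_n$, and using orthogonality of the $\chi_{\bar y}$ turns the left side into $2^{n+2}\sum_{y_i=1}\widehat f(\bar y)^2$; on the other hand that left side visibly counts each composite edge of direction $i$ twice, each time contributing $(b+c)^2$, so it equals $2(b+c)^2N_i$, where $N_i$ is the number of composite edges of direction~$i$. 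Hence $N_i=\tfrac{2^{n+1}}{(b+c)^2}\sum_{y_i=1}\widehat f(\bar y)^2$. As $\sum_{\bar y}\widehat f(\bar y)^2=bc$ by Lemma~\ref{FourierSystem}, the three quantities $\sum_{y_i=0}\widehat f(\bar y)^2$, $\sum_{y_i=1}\widehat f(\bar y)^2$, and $N_i$ are affine functions of one another, so it suffices to prove that $S_i:=\sum_{y_i=1}\widehat f(\bar y)^2$ is independent of $i$; this yields both (i) and (ii).

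For the crux I would first record that, by Lemma~\ref{FourierSystem}, every nonzero Fourier coefficient of $f$ lives on a vector of weight $(b+c)/2=2n/3$ (the hypothesis $a-c=-n/3$ forces $3\mid n$ and $b+c=4n/3$), and $\widehat f(\overline0)=0$. Multiplying $(b-c)\widehat f(\bar x)=\sum_{\bar y+\bar z=\bar x}\widehat f(\bar y)\widehat f(\bar z)$ by $\widehat f(\bar x)$ and summing over all $\bar x$ with $x_i=1$ (all such $\bar x$ being automatically nonzero) gives
$$ (b-c)\,S_i \;=\; \sum_{\substack{\bar x+\bar y+\bar z=\overline0\\ x_i=1}}\widehat f(\bar x)\widehat f(\bar y)\widehat f(\bar z). $$
Summing the same identity instead over all nonzero $\bar x$, and using $\widehat f(\overline0)=0$ together with $\sum_{\bar y}\widehat f(\bar y)^2=bc$, gives the ``total'' triple sum $\Sigma:=\sum_{\bar x+\bar y+\bar z=\overline0}\widehat f(\bar x)\widehat f(\bar y)\widehat f(\bar z)=(b-c)bc$.

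Now comes the one substantive point. I would prove that whenever $\bar x,\bar y,\bar z$ each have weight $2n/3$ and $\bar x+\bar y+\bar z=\overline0$, the three zero-sets $\{j:x_j=0\}$, $\{j:y_j=0\}$, $\{j:z_j=0\}$ are pairwise disjoint of size $n/3$ each, hence partition $\{1,\dots,n\}$. Indeed $z_j=0$ iff $x_j=y_j$; a weight count gives $|\mathrm{supp}(\bar x)\cap\mathrm{supp}(\bar y)|=n/3$, so the zero-sets of $\bar x$ and $\bar y$ are disjoint with union of size $2n/3$, whence $\{j:z_j=0\}=\mathrm{supp}(\bar x)\cap\mathrm{supp}(\bar y)$ has size $n/3$, and symmetry finishes it. Consequently, for every fixed $i$, exactly one of $x_i,y_i,z_i$ vanishes in each admissible triple. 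Since the set of admissible triples and the weight $\widehat f(\bar x)\widehat f(\bar y)\widehat f(\bar z)$ are invariant under all permutations of $(\bar x,\bar y,\bar z)$, the partial sums $W^x_i,W^y_i,W^z_i$ of $\widehat f(\bar x)\widehat f(\bar y)\widehat f(\bar z)$ over the triples with $x_i=0$, $y_i=0$, $z_i=0$ respectively are all equal, and they sum to $\Sigma$; hence each equals $\tfrac13(b-c)bc$, independently of $i$. Finally $x_i=1$ is the disjoint union of the cases $y_i=0$ and $z_i=0$, so the displayed identity reads $(b-c)S_i=W^y_i+W^z_i=\tfrac23(b-c)bc$, and dividing by $b-c\neq0$ gives $S_i=\tfrac23 bc$ for all $i$. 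This is~(i), and then $N_i=\tfrac{2^{n+1}}{(b+c)^2}\cdot\tfrac23 bc$ gives~(ii).

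The main obstacle is not the bookkeeping but spotting the structural claim --- that the three coordinate ``defect sets'' of any Fourier-supported solution of $\bar x+\bar y+\bar z=\overline0$ split the $n$ coordinates into equal thirds --- and recognizing that this is precisely where the extremal relation $b+c=4n/3$ is used; once it is in place, the total symmetry of the triple sum forces the even split over the three cases. One should also be slightly careful that the quadratic identity only holds for $\bar x\neq\overline0$, which is why the condition $x_i=1$ (rather than $x_i=0$) is the convenient one to sum over.
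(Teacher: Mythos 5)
Your proof is correct and follows essentially the same route as the paper's: both reduce (ii) to (i) via the identical Fourier computation for $\sum_{\bar x}(f(\bar x+\bar e_i)-f(\bar x))^2$, and both prove (i) by turning the quadratic identity of Lemma~\ref{FourierSystem} into a symmetric triple sum over $\bar x+\bar y+\bar z=\overline 0$ and observing that the zero-sets of the three vectors partition the coordinates into thirds. Your write-up is in fact slightly more careful (you justify the disjointness of the zero-sets by a weight count, extract the explicit constant $\tfrac23 bc$, and use the correct normalization $(b+c)^2$ where the paper's displayed edge-count formula has an apparent typo $(b-c)^2$), but the underlying argument is the same.
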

\begin{proof}
(i) Since our partition attains the bound on correlation immunity, we have $a-c=-\frac{n}{3}$.
By Lemma \ref{FourierSystem}, we know that $\widehat f(\bar x)=0$ if $\wt(\bar x)\neq \frac{2n}{3}$,  and for every $\bar x \ne \overline 0$, the following equality holds:
$$ (b-c)\widehat f(\bar x) = \sum_{\bar y,\bar z:\,\bar y+\bar z=\bar x}\widehat f(\bar y)\widehat f(\bar z); 
\qquad \mbox{hence, }\
\widehat f(\bar x)^2 = \frac{1}{b-c}\sum_{\bar y,\bar z:\,\bar y+\bar z=\bar x}
\widehat f(\bar x)\widehat f(\bar y)\widehat f(\bar z).
$$
Take some $i\in \{1,\ldots, n\}$. 
Consider the square of the norm 
of the subfunction corresponding to $x_i=0$, where $\bar x=(x_1,\ldots,x_n)$.
Our goal is to show that this norm 
does not depend on the choice of $i$.
\begin{multline*}
\sum_{\bar x:\,x_i=0} \widehat f(\bar x)^2 =
\sum_{\bar x:\,x_i=0} \frac{1}{b-c} 
\sum_{\bar y,\bar z:\,\bar y+\bar z=\bar x}
\widehat f(\bar x)\widehat f(\bar y)\widehat f(\bar z) =
\frac{1}{b-c}
\sum_{\genfrac{}{}{0pt}{}{\bar x,\bar y,\bar z:}{\genfrac{}{}{0pt}{}{\bar x+\bar y+\bar z=\overline 0}{x_i=0}}}\!\!\!
\widehat f(\bar x)\widehat f(\bar y)\widehat f(\bar z) \\
=
\frac{1}{3(b-c)}
\Biggl(
\sum_{\genfrac{}{}{0pt}{}{\bar x,\bar y,\bar z:}{\genfrac{}{}{0pt}{}{\bar x+\bar y+\bar z=\overline 0}{x_i=0}}}\!\!\!
\widehat f(\bar x)\widehat f(\bar y)\widehat f(\bar z)
+
\!\!\!\sum_{\genfrac{}{}{0pt}{}{\bar x,\bar y,\bar z:}{\genfrac{}{}{0pt}{}{\bar x+\bar y+\bar z=\overline 0}{y_i=0}}}\!\!\!
\widehat f(\bar x)\widehat f(\bar y)\widehat f(\bar z)
+
\!\!\!\sum_{\genfrac{}{}{0pt}{}{\bar x,\bar y,\bar z:}{\genfrac{}{}{0pt}{}{\bar x+\bar y+\bar z=\overline 0}{z_i=0}}}\!\!\!
\widehat f(\bar x)\widehat f(\bar y)\widehat f(\bar z)
\Biggr).
\end{multline*}
We state that $$
\sum_{\genfrac{}{}{0pt}{}{\bar x,\bar y,\bar z:}{\genfrac{}{}{0pt}{}{\bar x+\bar y+\bar z=\overline 0}{x_i=0}}}
\!\!\!
\widehat f(\bar x)\widehat f(\bar y)\widehat f(\bar z)
+\!\!\!
\sum_{\genfrac{}{}{0pt}{}{\bar x,\bar y,\bar z:}{\genfrac{}{}{0pt}{}{\bar x+\bar y+\bar z=\overline 0}{y_i=0}}}
\!\!\!
\widehat f(\bar x)\widehat f(\bar y)\widehat f(\bar z)
+\!\!\!
\sum_{\genfrac{}{}{0pt}{}{\bar x,\bar y,\bar z:}{\genfrac{}{}{0pt}{}{\bar x+\bar y+\bar z=\overline 0}{z_i=0}}}
\!\!\!
\widehat f(\bar x)\widehat f(\bar y)\widehat f(\bar z)=\!\!\!
\sum_{\genfrac{}{}{0pt}{}{\bar x,\bar y,\bar z:}{\bar x+\bar y+\bar z=\overline 0}}
\!\!\!
\widehat f(\bar x)\widehat f(\bar y)\widehat f(\bar z).$$
Indeed, for every nonzero term $\widehat f(\bar x)\widehat f(\bar y)\widehat f(\bar z)$, each of the words $\bar x$, $\bar y$, $\bar z$ has exactly $\frac{n}{3}$ zeros and the positions of the zeros do not intersect for $\bar x$, $\bar y$, and $\bar z$ (which follows from $ \bar x+\bar y+\bar z=\overline 0$). Therefore, every nonzero summand $\widehat f(\bar x)\widehat f(\bar y)\widehat f(\bar z)$ in the right side occurs exactly in one sum in the left side of the equality. This observation proves the last equality and the first claim of the theorem.

(ii) Let us count the number of composite edges of an arbitrary direction $i \in \{1,\ldots, n\}$. Clearly, this value equals $$\frac{1}{2(b-c)^2}\sum_{\bar x \in \VVQ_n}{\big(f(\bar x + \bar e_i)-f(\bar x)\big)^2}.$$
Using the Fourier transform, we have that 
$$\sum_{\bar x \in \VVQ_n}
 {\big(f(\bar x + \bar e_i)-f(\bar x)\big)^2}
=\sum_{\bar x \in \VVQ_n}{\bigg( 2\!\!\!\sum_{\bar y \in \VVQ_n:\, y_i=1}\!\!\!\!\!\!
{\widehat f(\bar y)(-1)^{(\bar x,\bar y)}}\bigg)^2}.$$
After removing parentheses, dividing by $2^{n+2}$, and changing the order of summing, we have 
$$\frac{1}{2^n}
\sum_{\bar y \in \VVQ_n:y_i=1}
\sum_{\stackrel{\phantom{.}}{\bar y' \in \VVQ_n:y'_i=1}}
\!\!\!\!\!\!
{\widehat f(\bar y)\widehat f(\bar y')
\sum_{\bar x \in \VVQ_n}{(-1)^{(\bar x,\bar y+\bar y')}}}=
\!\!\!\!\!
\sum_{\bar y \in \VVQ_n:y_i=1} \!\!\!\!\!\! {\widehat f(\bar y)^2}=
\!\!\sum_{\bar y \in \VVQ_n}{\widehat f(\bar y)^2}-
\!\!\!\!\!\!
\sum_{\bar y \in \VVQ_n:y_i=0}\!\!\!\!\!\!{\widehat f(\bar y)^2}\!.$$
By claim (i), the proof is done. 
\end{proof}

\begin{corollary} \label{le_uniform_coordinates}
If there exists an equitable partition of $Q_n$ with quotient matrix  $[[a,b],[c,d]]$ 
attaining the correlation-immunity bound, $b \ne c$, then 
either $\frac{b}{\gcd(b,c)}$ or $\frac{c}{\gcd(b,c)}$ is divisible by $3$.
\end{corollary}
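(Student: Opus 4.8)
The plan is to combine part~(ii) of Theorem~\ref{th_uniform_coordinates} --- the equidistribution of the composite edges among the $n$ coordinate directions --- with the elementary arithmetic constraints (a) and (b) on the quotient matrix. The single number produced by Theorem~\ref{th_uniform_coordinates}(ii) is an \emph{integer} (a count of edges), and forcing a certain closed-form expression for it to be an integer is precisely what will yield divisibility by $3$.

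First I would set $d=\gcd(b,c)$ and write $b=db'$, $c=dc'$ with $\gcd(b',c')=1$. By condition~(b), $b'+c'=\frac{b+c}{d}$ is a power of $2$, say $b'+c'=2^{s}$; since $b,c>0$ and $b\neq c$ we have $b',c'\geq 1$ with $b'\neq c'$, which rules out $s=0,1$, so $s\geq 2$. Moreover $b'$ and $c'$ are both odd (their sum $2^{s}$ is even and they are coprime), hence $b'c'$ is odd. Because the partition attains the correlation-immunity bound, $b+c=\frac{4n}{3}$, and comparing this with $b+c=d\cdot 2^{s}$ gives $n=3d\cdot 2^{\,s-2}$.

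Next I would count the composite edges of $Q_{n}$ in two ways. Every vertex of $C_{0}$ has exactly $b$ neighbours in $C_{1}$, so the number of composite edges is $b|C_{0}|$; also $b|C_{0}|=c|C_{1}|$ and $|C_{0}|+|C_{1}|=2^{n}$, whence $|C_{0}|=\frac{c\,2^{n}}{b+c}$ and the number of composite edges is $N:=\frac{bc\,2^{n}}{b+c}=db'c'\cdot 2^{\,n-s}$. By Theorem~\ref{th_uniform_coordinates}(ii), each of the $n$ directions carries the same integer number of composite edges, so $n\mid N$, i.e.\ $3d\cdot 2^{\,s-2}\mid db'c'\cdot 2^{\,n-s}$. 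Cancelling $d$ and then $2^{\,s-2}$ reduces this to $3\mid b'c'\cdot 2^{\,n-2s+2}$, and since $3$ is prime and coprime to $2$ we conclude $3\mid b'$ or $3\mid c'$, that is, $\frac{b}{\gcd(b,c)}$ or $\frac{c}{\gcd(b,c)}$ is divisible by $3$, as claimed.

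The only point that needs a little care is the bookkeeping of $2$-adic valuations in the last step: cancelling $2^{\,s-2}$ from $2^{\,n-s}$ is legitimate only if $n-s\geq s-2$, i.e.\ $n\geq 2s-2$. This follows from $n=3d\cdot 2^{\,s-2}\geq 3\cdot 2^{\,s-2}$ together with the elementary inequality $3\cdot 2^{\,s-2}\geq 2s-2$ for all $s\geq 2$ (true at $s=2$ and the left side grows faster). Everything else is a direct substitution, so I expect this verification that the exponents remain nonnegative to be the only genuine (and minor) obstacle.
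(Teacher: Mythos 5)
Your proof is correct and follows essentially the same route as the paper's: count the composite edges, use Theorem~\ref{th_uniform_coordinates}(ii) to conclude that $n$ divides that count, substitute $n=\frac34(b+c)$, and extract the factor $3$. You carry out explicitly the $2$-adic bookkeeping (via condition~(b) and the check that the exponent stays nonnegative) that the paper compresses into its final sentence, and your edge count $\frac{bc}{b+c}2^{n}$ is the correct one (the paper's $\frac{bc}{b+c}2^{n-1}$ is off by a harmless factor of $2$).
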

\begin{proof}
Let $(C_0,C_1)$ be an equitable $2$-partition of $Q_n$ with quotient matrix $[[a,b],[c,d]]$, $b\neq c$, attaining the correlation immunity bound. 
From the definition of an equitable partition, 
we see that there are $\frac{c}{b+c}2^n$ vertices in $C_0$. Consequently, there are exactly $\frac{bc}{b+c}2^{n-1}$ composite edges in the graph. By Theorem \ref{th_uniform_coordinates} we conclude that 
$$\frac{bc}{n(b+c)}2^{n-1}\in \mathbb{N}.$$
Since our partition attains the bound on correlation immunity, we have $a-c=-\frac{n}{3}$. The degree of the $n$-cube equals $n=a+b$; so, we have $n=\frac34(b+c)$. Substituting this expression to the number of edges, we prove the required statement.
\end{proof}

Corollary~\ref{le_uniform_coordinates} implies the nonexistence of an infinite sequence 
of putative parameters of equitable $2$-partition of $Q_n$ 
for which this question was open before. In particular, 
it gives an alternative proof of the nonexistence of equitable $2$-partitions of $Q_{12}$ 
with quotient matrix $[[1,11],[5,7]]$, 
which was shown in \cite{FDF:12cube.en}, 
and the nonexistence of $2$-partitions of $Q_{24}$ with quotient matrices 
 $[[2,22],[10,14]]$ and $[[5,19],[13,11]]$:
 
\begin{corollary}[example] \label{co_some_matrices}
There are no equitable $2$-partitions of $Q_n$ 
with quotient matrices \linebreak[4]
$[[T,11T],[5T,7T]]$ \textup($T=n/12$\textup) and $[[5T,19T],[13T,11T]]$ \textup($T=n/24$\textup). 
\end{corollary}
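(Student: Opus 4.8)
The plan is to derive both non-existence statements as immediate consequences of Corollary~\ref{le_uniform_coordinates}; the only real work is to check that the two families of parameters satisfy the hypotheses of that corollary while violating its conclusion.

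First I would confirm that each family attains the correlation-immunity bound with $b\neq c$, so that Corollary~\ref{le_uniform_coordinates} is applicable. For $[[T,11T],[5T,7T]]$ one has $n=a+b=12T$ and $a-c=T-5T=-4T=-n/3$, while $b=11T\neq 5T=c$; for $[[5T,19T],[13T,11T]]$ one has $n=a+b=24T$ and $a-c=5T-13T=-8T=-n/3$, while $b=19T\neq 13T=c$. For completeness one may also observe that in both cases $a+b=c+d$ and $b,c>0$, and that $(b+c)/\gcd(b,c)$ equals $16$ and $32$ respectively; thus conditions (a) and (b) of Section~\ref{s:def} hold as well, which is exactly what makes these parameters genuinely open before the present result.

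Next I would compute the two reduced quantities appearing in the conclusion of Corollary~\ref{le_uniform_coordinates}. Since $\gcd(11,5)=1$, for the first family $\gcd(b,c)=\gcd(11T,5T)=T$, so $b/\gcd(b,c)=11$ and $c/\gcd(b,c)=5$; since $\gcd(19,13)=1$, for the second family $\gcd(b,c)=T$, so $b/\gcd(b,c)=19$ and $c/\gcd(b,c)=13$. In each case neither of the two numbers is divisible by $3$, so the conclusion of Corollary~\ref{le_uniform_coordinates} cannot hold. By the contrapositive of that corollary, there is no equitable $2$-partition of $Q_n$ with either quotient matrix.

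I do not expect any genuine obstacle here: the entire content of the proof is the elementary arithmetic verification that both infinite families meet the hypotheses of Corollary~\ref{le_uniform_coordinates} (they satisfy (a)--(c) and attain the bound) while failing its divisibility conclusion, so the corollary directly forbids the corresponding partitions.
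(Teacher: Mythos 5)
Your proof is correct and is exactly the argument the paper intends: the corollary is stated as an immediate example of Corollary~\ref{le_uniform_coordinates}, and your arithmetic verification (both families attain the bound with $b\neq c$, $\gcd(b,c)=T$ in each case, and none of $11$, $5$, $19$, $13$ is divisible by $3$) is all that is needed.
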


\section[Equitable partitions with quotient matrix {[[3,9],[7,5]]}]%
{The equitable partitions with quotient matrix $[[3,9],[7,5]]$}
\label{s:3975}
In this Section we characterize all inequivalent $2$-partitions of $Q_{12}$ with quotient matrix $[[3,9],[7,5]]$.
\subsection{General properties}
Let $(C_0,C_1)$ be an equitable $2$-partition with quotient matrix $[[3,9],[7,5]]$.
By direct counting, we have $|C_0|=7\cdot 256$ and $|C_1|=9\cdot 256$. Let $f$ be the associated function:
      $$f(\bar x)=\begin{cases}
      9, \,\,\, \bar x\in C_0\\
      -7, \,\,\,\bar x\in C_1.
      \end{cases}$$ 
By Lemma \ref{FourierSystem}, we know that $f$ is an eigenfunction corresponding to the eigenvalue $\lambda_8(12)=-4$ and all its nonzero Fourier coefficients have weight $8$. Therefore, Proposition~\ref{p:sarkar}(i) guarantees that the sum of values of $f$ over any $5$-face equals $0$. Consequently, any $5$-face contains exactly $18$ vertices from $C_1$ and $14$ vertices from $C_0$. Proposition~\ref{p:sarkar}(ii) gives us the identity
$$
16\cdot \widehat f(\bar x)
=
\sum_{\bar z  \preccurlyeq \bar x+\overline1 }{f(\bar z)} 
\qquad 
\mbox{ for all $\bar x$ such that $\wt(\bar x)=8$.}
$$

In the right side of the equality we have the sum of values of $f$ 
over some $4$-face of $Q_{12}$. This means that 
$\widehat f(\bar x)\in \{\frac{1}{16}(9m-7(16-m)):\,m=0,1,\ldots, 16\}=\{m-7:m=0,1,\ldots, 16\}$. 
In particular, $\widehat f(\bar x)$ is integer.

Let us take an arbitrary $\bar x$ of weight $9$ and use Proposition~\ref{p:sarkar}(ii) 
one more time:
$$ 
\sum_{\bar z \preccurlyeq \bar x}{\widehat f(\bar z)}
=
\frac18\sum_{\bar z \preccurlyeq \bar x + \overline1}{f(\bar z)}.
$$
Since the value from the right side of the equation belongs to $\{\frac18(9m-7(8-m)):m=0,1,\ldots, 8\}=\{2m-7:m=0,1,\ldots, 8\}$, the sum $\sum_{\bar z \preccurlyeq \bar x}{\widehat f(\bar z)}$ is odd.
For a given $\bar x\in \VQ_n$ of weight $9$,
there is at least one $\bar z \preccurlyeq \bar x$ 
of weight $8$ such that $\widehat f(\bar z)$ is odd.
In other words, the set of quadruples 
of zero coordinates 
of the weight-$8$ vertices $\bar z$
for which $\widehat f(\bar z)$ is odd forms a $(3,4,12)$-covering $T$. 
Our next goal is to describe the set of possible values $\widehat f$ can take.
     
Applying Lemma \ref{FourierSystem} to our function, 
we have 

\begin{eqnarray}
   \widehat f(\bar x)&=&0, \qquad\mbox{if }\wt(\bar x)\neq 8, \\ 
   2\widehat f(\bar x) &=& \sum_{\bar y,\bar z:\,\bar y+\bar z=\bar x}\widehat f(\bar y)\widehat f(\bar z),\qquad\mbox{if }\bar x\neq \overline 0,\label{eq:system_3975_2}\\ 
   \sum_{\bar x}{\widehat f(\bar x)^2}&=&63. \label{eq:system_3975_3}
\end{eqnarray}

Suppose there is $\bar y$ such that $|\widehat f(\bar y)|\ge 2$. Without loss of generality we take $$\bar y=(1,1,1,1,1,1,1,1,0,0,0,0).$$ 
By Theorem~\ref{th_uniform_coordinates} we have 
$$\sum_{\bar x:\,x_{12}=0} \widehat f(\bar x)^2=21.$$
By Proposition \ref{coverings}(ii),  
the elements of our covering $T$ containing the $12$-th coordinate position 
form a $(2,3,11)$-covering by odd values of $\widehat f$ of the set $\{1,2,\ldots, 11\}$. 
Since the sum of squares equals $21$ and $|\widehat f(\bar y)|\ge 2$, we conclude that the size of this covering is not bigger than $17$. By Proposition \ref{coverings}(i), it must be at least $19$, and we get a contradiction.

The  arguments above prove the following statement.
\begin{lemma}\label{+-1}
Let $f$ be the associated function of an equitable $2$-partition of $Q_{12}$ with quotient matrix $[[3,9],[7,5]]$. 
Then $\widehat f(\bar x)\in \{-1,0,1\}$
for every $\bar x\in \VQ_n$.
\end{lemma}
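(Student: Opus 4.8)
The plan is to argue by contradiction, following the strategy that the surrounding text has already set up. Suppose some Fourier coefficient has absolute value at least $2$; after permuting coordinates we may assume $|\widehat f(\bar y)|\ge 2$ for $\bar y=(1,1,1,1,1,1,1,1,0,0,0,0)$, so the four zero positions of $\bar y$ are $9,10,11,12$. First I would invoke Theorem~\ref{th_uniform_coordinates}(i) together with identity~(\ref{eq:system_3975_3}): since $\sum_{\bar x}\widehat f(\bar x)^2=63$ and the quantity $\sum_{\bar x:\,x_i=0}\widehat f(\bar x)^2$ is independent of $i$, each such sum equals $63\cdot\frac{8}{12}=42$, hence by complementation $\sum_{\bar x:\,x_i=1}\widehat f(\bar x)^2=21$. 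In particular $\sum_{\bar x:\,x_{12}=1}\widehat f(\bar x)^2=21$, and also $\sum_{\bar x:\,x_{12}=0}\widehat f(\bar x)^2=21$ — this is exactly the displayed equation in the excerpt preceding the lemma; note $\bar y$ itself satisfies $y_{12}=0$, so it contributes at least $4$ to the sum over $x_{12}=0$.

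The combinatorial heart is the earlier observation that the set $T$ of quadruples of zero-coordinate positions of the weight-$8$ vertices $\bar z$ with $\widehat f(\bar z)$ odd forms a $(3,4,12)$-covering. By Proposition~\ref{coverings}(ii), restricting to those members of $T$ containing position $12$ and deleting $12$, we obtain a $(2,3,11)$-covering $T_{12}$ of $\{1,\ldots,11\}$; by Proposition~\ref{coverings}(i) its size satisfies $|T_{12}|\ge \binom{11}{2}/\binom{3}{2}=55/3$, hence $|T_{12}|\ge 19$. On the other hand, each member of $T_{12}$ corresponds to a weight-$8$ vertex $\bar z$ with $z_{12}=0$ and $\widehat f(\bar z)$ odd, so $\widehat f(\bar z)^2\ge 1$ for each; since these $\bar z$ are distinct and all lie in the set $\{x:\,x_{12}=0\}$, while $\bar y$ also lies there with $\widehat f(\bar y)^2\ge 4$ and is not counted among them (as $\widehat f(\bar y)$ need not be odd — but even if it is, $\widehat f(\bar y)^2\ge 4$ contributes at least $3$ more than a single odd coefficient would), we get
$$\sum_{\bar x:\,x_{12}=0}\widehat f(\bar x)^2 \ \ge\ (|T_{12}|-[\,\bar y\text{ counted in }T_{12}\,]) + \widehat f(\bar y)^2 \ \ge\ 18 + 4 \ =\ 22 \ >\ 21,$$
contradicting the value $21$ computed above. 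Hence no coefficient has absolute value $\ge 2$, and combined with the already-established integrality of $\widehat f(\bar x)$ this gives $\widehat f(\bar x)\in\{-1,0,1\}$ for all $\bar x$.

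The step I expect to require the most care is the bookkeeping in the final inequality: one must be careful about whether $\bar y$'s quadruple of zero positions $\{9,10,11,12\}$ lies in $T$ and hence whether its deleted version $\{9,10,11\}$ is one of the $\ge 19$ members of $T_{12}$, so that it is not double-counted when added to the $|T_{12}|$ unit contributions. As indicated above, even in the worst case where $\widehat f(\bar y)$ happens to be odd, replacing that member's contribution of $1$ by $\widehat f(\bar y)^2\ge 4$ still pushes the sum to at least $19-1+4=22$; and if $\widehat f(\bar y)$ is not odd, the bound is $19+4=23$. Either way the sum strictly exceeds $21$. The only other thing to verify is that the integrality of $\widehat f$ and the covering property of $T$ — both derived earlier in the section via Proposition~\ref{p:sarkar}(ii) — may indeed be used as given; they can, since they were established before the lemma without reference to it.
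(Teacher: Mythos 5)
Your argument is essentially the paper's own proof: the same appeal to Theorem~\ref{th_uniform_coordinates}(i) to pin down $\sum_{\bar x:\,x_{12}=0}\widehat f(\bar x)^2=21$, the same $(3,4,12)$-covering formed by the zero-quadruples of the odd coefficients, the same derived $(2,3,11)$-covering of size at least $\lceil 55/3\rceil=19$ via Proposition~\ref{coverings}, and the same contradiction with the norm bound (your explicit case split on whether the triple of $\bar y$ lies in the derived covering is, if anything, slightly more careful than the paper's ``at most $17$'' count). One arithmetic slip to fix: each nonzero coefficient has weight $8$ and hence only $4$ zero positions, so $\sum_{\bar x:\,x_i=0}\widehat f(\bar x)^2=63\cdot\frac{4}{12}=21$, not $63\cdot\frac{8}{12}=42$ (that is the complementary sum over $x_i=1$); since the value you actually use in the final inequality is the correct $21$, the argument is unaffected.
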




\subsection{Configurations of overcovered triples}
As follows from Lemma \ref{+-1} above,
for every triple $\{i,j,k\}$ of different coordinates the number of nonzeros
$\bar x=(x_1,...,x_{12})$ of $\widehat f$ such that $x_i=x_j=x_k=0$ is odd.
Since the nonzero $\bar x$ has exactly
$4$ zero coordinates (the quadruple of these coordinates will be referred to as \emph{block}), we have a covering of all triples by $63$ blocks, a $(3,4,12)$ covering. Consider the multiset
$A$ of all triples where the multiplicity of a triple is the number of blocks covering this triple (we know this number is odd). Reducing the multiplicities by $1$, we get a multiset $B$ with the coefficients equal to the number of ``overcovering'' of the corresponding triple. 
All these coefficients are
even, and hence we can divide them by $2$,
obtaining a multicet $C$.
The elements of $C$ will be called \emph{bitriples} 
(naturally, one bitriple in $C$ corresponds to two triples in $B$).
Taking into account the multiplicities,
we have exactly $16$ bitriples.
Indeed, $63$ blocks cover 
$4\cdot 63 = 252 = 220+2\cdot 16$ 
triples in total;
each of $12\cdot 11\cdot 10/3!=220$
$3$-subsets of the set of coordinates
is covered, plus each of $16$ bitriples
is covered twice.

\begin{lemma}
Every coordinate belongs to exactly
$4$ bitriples.
\end{lemma}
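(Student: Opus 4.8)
The plan is to use Theorem~\ref{th_uniform_coordinates}(i), which tells us that $\sum_{\bar x:\,x_i=0}\widehat f(\bar x)^2 = 21$ for every coordinate $i$ (the total is $63$ by \eqref{eq:system_3975_3}, split uniformly over the three ``thirds'' $\{i : x_i = 0\}$ when $n=12$ and $\wt(\bar x)=8$). Since by Lemma~\ref{+-1} all Fourier coefficients lie in $\{-1,0,1\}$, this says that exactly $21$ of the nonzero weight-$8$ vectors $\bar x$ (equivalently, $21$ blocks) have a zero in position $i$. Now fix a coordinate $i$ and count, over these $21$ blocks, how many triples through $i$ each block covers: a block (a $4$-set of coordinates) containing $i$ covers exactly $\binom{3}{2}=3$ triples of the form $\{i,j,k\}$. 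So the total number of (block, triple-through-$i$) incidences is $3\cdot 21 = 63$.

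On the other hand, the $\binom{11}{2}=55$ triples through $i$ are covered by these $21$ blocks, each an odd number of times; writing the multiplicity of triple $\{i,j,k\}$ as $1 + 2\mu_{jk}$ where $\mu_{jk}\ge 0$ is its overcovering coefficient, summing gives $\sum_{\{j,k\}}(1+2\mu_{jk}) = 55 + 2\sum \mu_{jk} = 63$, hence $\sum_{\{j,k\}}\mu_{jk} = 4$. But $\sum_{\{j,k\}}\mu_{jk}$ is exactly the number of bitriples containing $i$ (counted with multiplicity), since a bitriple is an overcovered triple taken once per pair of extra coverings, and each bitriple containing $i$ has the form $\{i,j,k\}$ and contributes its coefficient to this sum. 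Therefore coordinate $i$ lies in exactly $4$ bitriples.

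The computation is essentially forced once the two ingredients are in place, so the main point is simply to identify them: the local count $\sum_{\bar x:\,x_i=0}\widehat f(\bar x)^2 = 21$ from Theorem~\ref{th_uniform_coordinates}(i), and the bookkeeping identity $3\cdot(\text{blocks through }i) = 55 + 2\cdot(\text{bitriples through }i)$, which is the coordinate-$i$ restriction of the global count $4\cdot 63 = 220 + 2\cdot 16$ already carried out before the lemma. The only thing to be careful about is the translation between ``Fourier coefficients with $x_i=0$'', ``blocks containing $i$'', and ``bitriples containing $i$'', making sure the parity and the halving in the definitions of $B$ and $C$ are tracked correctly; no genuine obstacle is expected. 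One can also observe that this is the natural consistency check: $12\cdot 4 = 48 = 3\cdot 16$, matching the fact that each of the $16$ bitriples contains $3$ coordinates.
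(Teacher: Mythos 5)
Your proposal is correct and follows essentially the same route as the paper's proof: both count the $21$ blocks through a fixed coordinate (via Theorem~\ref{th_uniform_coordinates}(i) and Lemma~\ref{+-1}), compare the $3\cdot 21=63$ incidences with the $55$ distinct triples through that coordinate, each covered an odd number of times, and halve the excess $63-55=8$ to get $4$ bitriples. You merely make explicit the derivation of the count ``$21$ blocks per coordinate,'' which the paper had already established in the proof of Lemma~\ref{+-1} and reuses here without comment.
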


\begin{proof}
Every coordinate belongs to $21$ blocks, 
which cover $21\cdot3=63$ 
triples with given coordinate,
taking into account the multiplicities. 
The number of different such triples 
is $11\cdot 10/2 = 55$. 
So, we have $(63-55)/2$ bitriples
(recall that each of bitriples 
corresponds to two overcoverings,
by the definition).
\end{proof}

\begin{lemma}\label{l:2odd}
{\rm (i)} Every two different coordinates belong to an odd number of blocks,
{\rm (ii)} at least $5$.
\end{lemma}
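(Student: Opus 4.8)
The plan is to run a counting argument on blocks containing a fixed pair $\{i,j\}$, exactly parallel to the one used for the previous lemma, and then push the parity/covering information one step further to obtain the lower bound $5$. First, fix two coordinates $i,j$. Each of the $63$ blocks has $4$ elements, hence covers $\binom{4}{2}=6$ pairs; summing over all blocks gives $63\cdot 6 = 378$ covered pairs with multiplicity. Since every one of the $\binom{12}{2}=66$ pairs must be covered (the blocks form a $(3,4,12)$-covering, so in particular every pair lies in some block), and the total overcovering is $378-66=312$, I would combine this with the bitriple bookkeeping already established: a pair $\{i,j\}$ is covered once for each block containing it, and the number of blocks through $\{i,j\}$ equals (number of triples $\{i,j,k\}$, counted with block-multiplicity) divided by the number of coordinates $k$ available... more precisely, $\sum_{k\neq i,j}(\text{mult. of }\{i,j,k\}\text{ in }A) = 3\cdot(\text{number of blocks}\ni\{i,j\})$ since each such block contributes exactly $3$ triples of the form $\{i,j,\cdot\}$. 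The left side is $\sum_{k}(1 + 2\cdot(\text{bitriple mult. of }\{i,j,k\})) = 10 + 2\cdot(\text{number of bitriples through both }i\text{ and }j)$. Hence the number of blocks through $\{i,j\}$ is $(10 + 2m_{ij})/3$ where $m_{ij}\geq 0$ is the number of bitriples containing $\{i,j\}$; for this to be an integer we need $m_{ij}\equiv 1\pmod 3$, and in particular $m_{ij}\geq 1$, giving that the number of blocks through $\{i,j\}$ is at least $(10+2)/3 = 4$, and is $\equiv 0\pmod{?}$... here I need the parity. The parity claim (i) should come out of the observation that, for a weight-$10$ vertex $\bar x$ with zeros at $i,j$, Proposition~\ref{p:sarkar}(ii) applied at $\bar x$ (as in the derivation preceding Lemma~\ref{+-1}) forces $\sum_{\bar z\preccurlyeq\bar x}\widehat f(\bar z)$ to have a controlled parity, and since the only $\bar z\preccurlyeq\bar x$ with $\widehat f(\bar z)\neq 0$ are the weight-$8$ vertices whose zero-quadruple is a block contained in the two-element "zero set" complement, the count of such blocks is forced to be odd.

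Concretely, for (i): take $\bar x$ of weight $10$ with $x_i=x_j=0$. Then $\{\bar z\preccurlyeq \bar x+\overline 1\}$ is a $2$-face, so the right side $\frac14\sum_{\bar z\preccurlyeq\bar x+\overline1}f(\bar z)$ lies in $\{\frac14(9m-7(4-m)):m=0,\dots,4\}=\{4m-7:m=0,\dots,4\}$, which is odd. On the left side, $\sum_{\bar z\preccurlyeq\bar x}\widehat f(\bar z) = \sum_{\bar z\preccurlyeq\bar x,\ \wt(\bar z)=8}\widehat f(\bar z)$ by Lemma~\ref{FourierSystem}; each such $\bar z$ has its four zeros among the ten positions where $\bar x$ has a one, i.e.\ its block is disjoint from $\{i,j\}$ — wait, that is the wrong pair. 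I should instead choose $\bar x$ of weight $10$ whose \emph{ones} include $i$ and $j$... let me reorganize: I want to count blocks containing $\{i,j\}$, so I take $\bar x$ with $\wt(\bar x)=9$, $x_i=x_j=0$, and exactly one more zero; but that over-constrains. The cleanest route is: take $\bar x$ of weight $10$ with zeros exactly at positions $i$ and $j$; then the weight-$8$ vertices $\bar z\preccurlyeq\bar x$ are precisely those whose (four) zero positions form a block $\subseteq \{1,\dots,12\}\setminus\{i,j\}$ that... no — $\bar z\preccurlyeq\bar x$ means $\bar z$ has ones only where $\bar x$ does, i.e.\ $\bar z$ has zeros everywhere $\bar x$ has zeros, so $\{i,j\}\subseteq\text{zero set of }\bar z$; its zero-quadruple (= block) contains $\{i,j\}$. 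So $\sum_{\bar z\preccurlyeq\bar x}\widehat f(\bar z) = \sum_{B\ni\{i,j\}}\widehat f(\bar z_B)$, a sum of $\pm1$'s (by Lemma~\ref{+-1}) over blocks through $\{i,j\}$, and this sum being odd forces the number of such blocks to be odd. That settles (i).

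For (ii): I would combine (i) (the count is odd) with the integrality constraint derived above, namely that the number of blocks through $\{i,j\}$ equals $(10+2m_{ij})/3$ with $m_{ij}\geq 0$ an integer. Odd values of $(10+2m_{ij})/3$: for $m_{ij}=1$ we get $4$ (even, excluded by (i)); for $m_{ij}=4$ we get $6$ (even, excluded); for $m_{ij}=7$ we get $8$ — still even; hmm, $(10+2m)/3$ is odd exactly when $10+2m\equiv 3\pmod 6$, i.e.\ $2m\equiv -7\equiv -1\pmod 6$, impossible since $2m$ is even — so this particular parity route says $(10+2m_{ij})/3$ can never be odd, meaning my integrality bookkeeping must be off by the $m_{ij}\equiv 1\pmod 3$ correction or there is a hidden $+1$. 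I expect the resolution is that the triple $\{i,j,k\}$ counted in $A$ has multiplicity $1 + 2(\text{bitriple mult.})$ only generically and I have miscounted the base; the honest thing is to recount $\sum_{k}(\text{mult of }\{i,j,k\}) = 3\cdot\#\{\text{blocks}\ni\{i,j\}\}$ directly, note the left side is $\equiv \#\{k\neq i,j\} = 10 \pmod 2$ hence even, get $\#\{\text{blocks}\ni\{i,j\}\}$ has the same parity as... no, $3x$ even iff $x$ even, contradicting (i) again — so in fact the multiset $A$ must be reconsidered: not every triple $\{i,j,k\}$ need be covered an \emph{odd} number of times once we restrict to those through a fixed pair, because the global odd-covering statement is about \emph{all} triples and here we are slicing.

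The main obstacle, then, is reconciling the "odd" statement (i) with the divisibility-by-$3$ type constraint coming from counting triples through a fixed pair; I expect the correct argument for (ii) to instead rule out the values $1$ and $3$ by hand: if only $1$ block contains $\{i,j\}$, then deleting coordinate $i$ and using Proposition~\ref{coverings}(ii) twice produces a $(1,2,10)$-covering of size $1$ (after a second shortening at $j$), i.e.\ a single pair covering all $10$ singletons of a $10$-set, which is absurd; and if exactly $3$ blocks contain $\{i,j\}$, these $3$ blocks each shrink (removing $i$ and $j$) to a pair inside the remaining $10$ coordinates, and together they must cover all $10$ remaining singletons (since every triple $\{i,j,k\}$ is covered), but $3$ pairs cover at most $6$ singletons $<10$ — contradiction. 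Combined with (i), which excludes $4$, this forces the number to be at least $5$. So the real content of (ii) is this two-step shortening plus a counting bound on coverings, and I would present it in exactly that form, with Proposition~\ref{coverings} doing the heavy lifting; the bookkeeping with bitriples, while tempting, I would relegate to a consistency check rather than the main line of argument.
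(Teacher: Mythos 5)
Your final argument is correct, but part (i) is proved by a genuinely different route than the paper's. For (i) the paper fixes the two coordinates, lets $21-k$ be the number of blocks through both (the $21$ coming from Theorem~\ref{th_uniform_coordinates}), and compares the values of $f$ at $\overline 0$, $\bar e_1$, $\bar e_2$, $\bar e_1+\bar e_2$ (each $9$ or $-7$) to show that $a+b+c$ is even while $a+b+c\equiv k \pmod 2$, forcing $k$ even and $21-k$ odd. You instead apply Proposition~\ref{p:sarkar}(ii) to a weight-$10$ word $\bar x$ with zeros exactly at $i,j$: the right-hand side is $\frac14$ of a sum of $f$ over a $2$-face, hence the odd number $4m-7$, while the left-hand side is, by Lemma~\ref{+-1}, a $\pm1$-sum over exactly the blocks containing $\{i,j\}$. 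This is a clean one-line extension of the computation the paper already carries out for weight-$8$ and weight-$9$ words before Lemma~\ref{+-1}, and it is arguably shorter than the paper's four-point parity bookkeeping. For (ii) your argument (the blocks through $\{i,j\}$ must cover all $10$ triples $\{i,j,m\}$, each block covering only $2$ of them, so at least $\lceil 10/2\rceil=5$ are needed) is exactly the paper's one-sentence proof; the detour through Proposition~\ref{coverings}(ii) and ruling out $1$ and $3$ separately is more elaborate than necessary but sound.

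One concrete error in the abandoned middle portion: a block containing $\{i,j\}$ contributes $2$, not $3$, triples of the form $\{i,j,\cdot\}$, so the correct identity is $\sum_{k\neq i,j}\mathrm{mult}_A(\{i,j,k\})=2\cdot\#\{\text{blocks}\ni\{i,j\}\}$. With each multiplicity odd (hence $\geq 1$) this reads $\#\{\text{blocks}\ni\{i,j\}\}=5+\sum_k(\text{bitriple multiplicity of }\{i,j,k\})\geq 5$, which proves (ii) instantly and is perfectly consistent with (i) (the correction term is even by Corollary~\ref{c:2even}). Your ensuing claim that ``not every triple $\{i,j,k\}$ need be covered an odd number of times once we restrict to those through a fixed pair'' is false --- the odd-covering property holds for every triple without exception --- and the apparent contradiction was entirely an artifact of the factor $3$. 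Delete that passage; the rest stands.
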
 

\begin{proof}
Assume that the 
$1$-st and $2$-nd coordinates 
meet in exactly $21-k$ blocks.
We know that the number of all blocks is $63$; exactly $63-42=21$ of them contain the first coordinate; exactly $21-k$ of them contain the first and the second coordinates. Hence, exactly $k$ blocks
contain the first coordinate 
and do not contain the second. 
Similarly, exactly $k$ blocks
contain the second coordinate and do not contain the first.

The sum of all $63$ values of
$f$ is $9$ or $-7$ (the value of $f$ in $\overline 0$). 

(a) The value of $f$ at $10\overline 0$ 
is also $9$ or $-7$;
therefore, among the $42$ nonzeros
with $1$ in the first coordinate,
either $a:=17$, or $a:=21$, or $a:=25$ values $-1$ and 
$25$, $21$, or $17$ values $+1$, respectively (for example, if $f(\overline 0)=-7$ and $f(10\overline 0)=9$, then among $42$ nonzeros 
with $1$ in the first coordinate,
$25$ should have the value $-1$
and $17$ the value $1$, 
for the sum change by $16$ 
during the sign inverse,
which corresponds to the translation of the partition by the vector $10\overline 0$).  

(b) The value of $f$ at $01\overline 0$ is  also $9$ or $-7$, 
therefore, among the $42$ nonzeros
with $1$ in the second coordinate,
either $b:=17$, or $b:=21$, or $b:=25$ values $-1$ and 
$25$, $21$, or $17$ values $+1$, respectively.

(c) The value of $f$ at $11\overline 0$ is  also $9$ or $-7$, 
therefore, among the $2k$ nonzeros
with different values in the first and 
the second coordinate,
either $c=k-4$, or $c=k$, or $c=k+4$ values $-1$ and 
$k+4$, $k$, or $k-4$ values $+1$, respectively.

In the arguments (a), (b), (c),
every nonzero occurs twice 
or does not occur at all 
(if it starts with $00$).
Indeed, if we denote by 
$\alpha_{i,j}$ the number of nonzeros
$\bar x=(x_1,...,x_{12})$ such that
$x_1=i$, $x_2=j$, and $f(\bar x) = -1$, then we get
$a=\alpha_{1,0}+\alpha_{1,1}$,
$b=\alpha_{0,1}+\alpha_{1,1}$,
$c=\alpha_{0,1}+\alpha_{1,0}$.
Hence, $a+b+c=2(\alpha_{1,0}+\alpha_{0,1}+\alpha_{1,1})$ is even. 
On the other hand, 
$a+b+c\in
\{k+30,k+34,k+38,k+42,k+46,k+50,k+54\}$.
It follows that $k$ is even and $21-k$ is odd.

(ii) follows from covering of all $10$ triples that include the given pair.
\end{proof}

\begin{corollary}\label{c:2even}\label{COR:BI2} 
Each two different coordinates belong to an even number of bitriples, $0$, $2$, or $4$.
\end{corollary}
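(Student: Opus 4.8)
The plan is to pin down, for a fixed pair of coordinates (say $1$ and $2$), the number of bitriples containing both of them by relating it to $b_{12}$, the number of blocks containing both coordinate $1$ and coordinate $2$. From the proof of Lemma~\ref{l:2odd} we already know that $b_{12}=21-k$ with $k$ even, so $b_{12}$ is odd, and by Lemma~\ref{l:2odd}(ii) we have $b_{12}\ge 5$.

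First I would set up a double count over pairs $(D,\tau)$, where $D$ is one of the $63$ blocks and $\tau$ is a triple of coordinates with $\{1,2\}\subseteq\tau\subseteq D$. Summing over $D$: only the $b_{12}$ blocks containing $\{1,2\}$ contribute, and each such $4$-element block contains exactly two triples through $\{1,2\}$, so the total is $2b_{12}$. Summing over $\tau$: the triples through $\{1,2\}$ are exactly the ten sets $\{1,2,\ell\}$ with $\ell\in\{3,\dots,12\}$, and each of them contributes its multiplicity $m_A(\{1,2,\ell\})$ in the multiset $A$ of all triples; hence $\sum_{\ell\in\{3,\dots,12\}} m_A(\{1,2,\ell\})=2b_{12}$.

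Next I would pass from $A$ to the multiset $C$ of bitriples. Since every triple has odd multiplicity in $A$, the multiplicity of $\tau$ in $C$ is $\frac12\bigl(m_A(\tau)-1\bigr)$; summing over the ten triples through $\{1,2\}$, the number of bitriples containing both coordinate $1$ and coordinate $2$ equals $\frac12(2b_{12}-10)=b_{12}-5$. As $b_{12}$ is odd this is even, as $b_{12}\ge 5$ it is nonnegative, and, since coordinate $1$ lies in exactly $4$ bitriples by the preceding lemma, it is at most $4$. Therefore it is $0$, $2$, or $4$.

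I do not expect a genuine obstacle here: all the ingredients (oddness and the lower bound for $b_{12}$, the count of $4$ bitriples per coordinate) are already available, and the only thing to be careful about is the combinatorial bookkeeping — in particular the factor $2$ coming from ``each block through a fixed pair contains two triples through that pair'', and the $-10$ correction coming from subtracting $1$ from each of the ten multiplicities before halving.
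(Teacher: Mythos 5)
Your proof is correct and follows essentially the same route as the paper: the same double count (each block through a fixed pair covers exactly two triples through that pair) giving $b_{12}=5+k$, combined with the oddness of $b_{12}$ from Lemma~\ref{l:2odd}. You are slightly more explicit than the paper in justifying the upper bound $k\le 4$ via the fact that each coordinate lies in exactly $4$ bitriples, but the substance is identical.
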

\begin{proof}
Without loss of generality,
consider the first two coordinates.
every $4$-block containing them covers
exactly two triples they belong to.
So, the number of such $4$-blocks
is the half of the number of different
triples of form $\{1,2,i\}$, $i>2$,
plus the number, say $k$, of
bitriples of such form.
That is, $(12-2)/2+k$. By Lemma~\ref{l:2odd},
this number is odd. Hence, $k$ is even.
\end{proof}

Our next goal is to describe possible bitriple systems
up to equivalence.
We first assume that there is at least one 
bitriple of multiplicity $1$.

\begin{lemma}\label{l:8bi}
If there is a bitriple of multiplicity $1$, 
then it belongs to the collection of $8$ bitriples
$\{4\pm 3,5\pm 3,6\pm 3\}$,
up to a coordinate permutation.
\end{lemma}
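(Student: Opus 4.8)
The plan is to show that the mere existence of one multiplicity-$1$ bitriple forces, around it, the announced transversal configuration of eight bitriples, and that this configuration already uses up the entire incidence budget of the six coordinates it touches.

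The device I would build first is a statement about the \emph{link} of a coordinate. For $i\in\{1,\dots,12\}$ let $L_i$ be the multigraph on the other eleven coordinates whose edges are the pairs $\{a,b\}$ with $\{i,a,b\}$ a bitriple, each counted with the multiplicity of that bitriple. The fact proved above that every coordinate lies in exactly four bitriples says that $L_i$ has four edges, and Corollary~\ref{c:2even} says that every vertex of $L_i$ has even degree. I claim that if $i$ lies in \emph{some} multiplicity-$1$ bitriple --- equivalently, $L_i$ has an edge of multiplicity exactly $1$ --- then $L_i$ is a simple $4$-cycle; in particular all four bitriples through $i$ then have multiplicity $1$. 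This is the one place where a short case check is needed: a loopless even $4$-edge multigraph is the quadruple edge, a disjoint pair of double edges, an adjacent pair of double edges, or the simple $4$-cycle, and only the last one has a simple edge.

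Granting the claim, I would propagate it. After a coordinate permutation the given multiplicity-$1$ bitriple is $B_0=\{1,2,3\}$, so the claim applies to each of $1,2,3$. Applied to $1$, whose link is a $4$-cycle through the edge $\{2,3\}$, it gives (for a suitable labelling of the two new coordinates) that the bitriples through $1$ are $\{1,2,3\}$, $\{1,2,a\}$, $\{1,3,b\}$, $\{1,a,b\}$ with $a\ne b$ and $a,b\notin\{1,2,3\}$. A $4$-cycle is determined by any length-$2$ path it contains, so the edges $\{1,3\},\{1,a\}$ already known to lie in $L_2$ pin down the bitriples through $2$ as $\{1,2,3\}$, $\{1,2,a\}$, $\{2,a,c\}$, $\{2,3,c\}$ for one new coordinate $c$; likewise the edges $\{1,2\},\{1,b\},\{2,c\}$ of $L_3$ determine the bitriples through $3$. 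Here I must rule out the coincidence $c=b$: it would place the triangle on $\{1,2\}$, $\{1,b\}$, $\{2,b\}$ inside the $4$-cycle $L_3$, which is impossible. Hence $1,2,3,a,b,c$ are pairwise distinct and the bitriples through $3$ are $\{1,2,3\}$, $\{1,3,b\}$, $\{2,3,c\}$, $\{3,b,c\}$. Finally $\{1,3,b\}$ is one of the four (multiplicity-$1$) bitriples through $1$, so the claim applies to $b$ as well, and the edges $\{1,3\},\{1,a\},\{3,c\}$ already present in $L_b$ force its fourth bitriple to be $\{a,b,c\}$.

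At this stage the eight bitriples $\{1,2,3\},\{1,2,a\},\{1,3,b\},\{1,a,b\},\{2,3,c\},\{2,a,c\},\{3,b,c\},\{a,b,c\}$ all have multiplicity $1$, and each of $1,2,3,a,b,c$ lies in exactly four of them; since each coordinate lies in exactly four bitriples altogether, none of these six coordinates appears in any further bitriple. A direct check shows these eight bitriples are exactly the $2^3$ transversals of the pairs $\{1,c\}$, $\{2,b\}$, $\{3,a\}$, so relabelling coordinates so that these pairs become $\{1,7\}$, $\{2,8\}$, $\{3,9\}$ turns the eight bitriples into $\{4\pm 3,5\pm 3,6\pm 3\}$ and sends $B_0$ to $\{1,2,3\}$, one of them --- which is the assertion. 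The only genuinely non-routine points are the little multigraph classification inside the claim and the exclusion $c\ne b$; the rest is bookkeeping with the two constraints ``four bitriples through every coordinate'' and ``an even number of bitriples through every pair of coordinates''.
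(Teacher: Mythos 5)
Your proof is correct and follows essentially the same route as the paper's: both arguments propagate outward from the given multiplicity-$1$ bitriple using only the facts that every coordinate lies in exactly four bitriples and every pair of coordinates lies in an even number of them. Your link-multigraph classification (a loopless $4$-edge multigraph with all degrees even that contains a simple edge must be a simple $4$-cycle) is a clean repackaging of the ad hoc eliminations the paper performs --- e.g.\ its rejection of $\{1,9,3\}$ as the third bitriple through coordinate $1$ --- and your exclusion of the coincidence $c=b$ plays the role of the paper's rejection of $\{9,2,3\}$ and $\{8,2,3\}$.
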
 
\begin{proof}
Without loss of generality assume that we have a bitriple
\underline{$\{1,2,3\}$} of multiplicity $1$. 
By Corollary~\ref{COR:BI2}, 
there is another bitriple with $1$ and $2$. 
Without loss of generality it is \underline{$\{1,2,9\}$}.
By Corollary~\ref{COR:BI2}, 
there is another bitriple with $1$ and $3$. 
It cannot be $\{1,9,3\}$, 
because in that case any choice of the forth bitriple 
with $1$ contradicts Corollary~\ref{COR:BI2}. 
So, it is \underline{$\{1,8,3\}$}, without loss of generality (we did not use $8$ before). By Corollary~\ref{COR:BI2}, the fourth element with $1$ is \underline{$\{1,8,9\}$}.

Again by Corollary~\ref{COR:BI2} 
and since the multiplicity of $\{1,2,3\}$ is $1$, 
there is another bitriple with $2$ and $3$.
If it is  $\{9,2,3\}$, then we have bitriples 
$\{1,2,3\}$, $\{1,2,9\}$, $\{9,2,3\}$ with $9$, 
and the fourth bitriple with $9$ 
contradicts Corollary~\ref{COR:BI2}.
A similar argument rejects $\{8,2,3\}$ 
(with respect to $3$).
So, without loss of generality, 
we have \underline{$\{7,2,3\}$}.

Now, 
the fourth bitriple with $3$ must be
\underline{$\{7,8,3\}$}; 
the fourth bitriple with $2$ must be
\underline{$\{7,2,9\}$}; 
the fourth bitriple with $5$ must be
\underline{$\{7,8,9\}$}.
\end{proof}

\begin{lemma}\label{l:16bi1}
If there is a bitriple of multiplicity $1$, 
then the multiset of bitriples is one of the following, up to a coordinate permutation:
\begin{eqnarray}\label{eq:16-1}
&&
\big\{\{4\pm 3, 5\pm 3, 6\pm 3 \},\ \{7\pm 3, 8\pm 3, 9\pm 3 \}\big\},
\\ \label{eq:16-2}
&&
\big\{\{4\pm 3, 5\pm 3, 6\pm 3 \},\ 
4{\cdot}\{4,5,6\},\ 4{\cdot}\{10,11,12\}\big\}, 
\\ \label{eq:16-3}
&&
\big\{\{4\pm 3, 5\pm 3, 6\pm 3 \},\ 
2{\cdot}\{4,5,9\pm 3\},\ 2{\cdot}\{10,11,9\pm 3\} \big\}, 
\\ \label{eq:16-4}
&&
\big\{\{4\pm 3, 5\pm 3, 6\pm 3 \},\ 
2{\cdot}\{4,5,6\},\ 2{\cdot}\{4,11,12\},\ 2{\cdot}\{10,5,12\},\ 2{\cdot}\{10,11,6\}\big\}. 
\end{eqnarray} 
\end{lemma}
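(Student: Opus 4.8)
The plan is to reduce the statement to a purely combinatorial question about the six coordinates that do not occur among the eight bitriples produced by Lemma~\ref{l:8bi}, and then to enumerate the solutions by hand.

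First I would record the reduction. By Lemma~\ref{l:8bi} we may apply a coordinate permutation so that the eight triples $\{4\pm3,5\pm3,6\pm3\}$ (independent sign choices) --- equivalently, all eight transversals of the three pairs $\{1,7\}$, $\{2,8\}$, $\{3,9\}$ --- are bitriples. Each of the coordinates $1,2,3,7,8,9$ lies in exactly four of these eight triples; since every coordinate lies in exactly four bitriples, these eight bitriples must have multiplicity exactly $1$ and no further bitriple involves any of $1,2,3,7,8,9$. Hence the remaining $16-8=8$ bitriples (with multiplicity) are triples of the six-element set $X=\{4,5,6,10,11,12\}$, each element of $X$ lies in exactly four of them, and, by Corollary~\ref{c:2even}, every pair from $X$ lies in $0$, $2$, or $4$ of them. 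Since a coordinate permutation may act as an arbitrary permutation of $X$ while fixing the eight transversals above, it suffices to prove that any such multiset of eight triples of $X$ is, after relabeling $X$, one of the four tails appearing in \eqref{eq:16-1}--\eqref{eq:16-4}.

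For this I would write $n_{ab}\in\{0,2,4\}$ for the number of bitriples containing the pair $\{a,b\}\subseteq X$, note that $\sum_{b\ne a}n_{ab}=8$ for each $a$ (each of the four bitriples through $a$ contributes two such pairs), and split into two cases. If $n_{ab}=4$ for some pair, then all four bitriples through $a$ contain $b$ as well, so their four third coordinates form a size-$4$ multiset on the four-element set $X\setminus\{a,b\}$; by Corollary~\ref{c:2even} every multiplicity in it is even, hence this multiset is of type $x^4$ or $x^2y^2$. The first option makes $\{a,b,x\}$ saturate $a,b,x$, so the other four bitriples are all the triple $X\setminus\{a,b,x\}$, giving the tail of \eqref{eq:16-2}; in the second option, $a$ and $b$ are saturated, and a short count with the two leftover coordinates $\{c,d\}=X\setminus\{a,b,x,y\}$ forces the other four bitriples to be $2\{c,d,x\}+2\{c,d,y\}$, giving the tail of \eqref{eq:16-3}. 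If instead every $n_{ab}\in\{0,2\}$, then for each $a$ exactly one of its five numbers $n_{ab}$ equals $0$, so the zero-pairs form a perfect matching on $X$, which after relabeling is $\{4,10\},\{5,11\},\{6,12\}$; every bitriple is then a transversal of this matching, one of the eight triples indexed by $\{0,1\}^3$, and the conditions $n_{ab}=2$ on the twelve cross-pairs say exactly that the multiplicity function on the $3$-cube has all twelve edge-sums equal to $2$. Solving these (elementary) equations shows that it is $t$ on the even-weight transversals and $2-t$ on the odd-weight ones for some $t\in\{0,1,2\}$; then $t=1$ gives all eight transversals, the tail of \eqref{eq:16-1}, while $t=0$ and $t=2$ give twice the odd- and even-weight transversals respectively, which are exchanged by the transposition $4\leftrightarrow10$, the tail of \eqref{eq:16-4}.

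Combining the cases with the reduction finishes the argument. The enumeration steps are elementary; what needs genuine attention is checking that the two cases are exhaustive and that each multiset produced is matched, after a suitable permutation of $X$, to the intended one of \eqref{eq:16-1}--\eqref{eq:16-4}. I expect this identification bookkeeping, rather than any single clever step, to be the most laborious part of the proof.
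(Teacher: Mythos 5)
Your proof is correct and, in substance, a fully worked-out version of the paper's. The skeleton is the same: Lemma~\ref{l:8bi} fixes the first eight bitriples, and the task is to classify the remaining eight. But the paper compresses that second step into ``if there is a ninth multiplicity-$1$ bitriple, apply Lemma~\ref{l:8bi} again to get \eqref{eq:16-1}; otherwise the rest have multiplicity $2$ or $4$ and a simple exhaust search gives \eqref{eq:16-2}--\eqref{eq:16-4}'', whereas you make the enumeration explicit and uniform. Your reduction --- each of $1,2,3,7,8,9$ already lies in four of the eight transversals, so by the ``every coordinate is in exactly four bitriples'' lemma those eight have multiplicity exactly $1$ and the remaining eight bitriples live on $X=\{4,5,6,10,11,12\}$ with four through each element of $X$ --- is precisely the fact the paper leaves implicit, and your case split on the pair-counts $n_{ab}\in\{0,2,4\}$ (licensed by Corollary~\ref{c:2even}) correctly produces the four tails: the branch with some $n_{ab}=4$ gives \eqref{eq:16-2} and \eqref{eq:16-3} via the parity of the third-coordinate multiset (whose multiplicities are the even numbers $n_{ax}$), and the branch with all $n_{ab}\in\{0,2\}$ gives the perfect matching, the edge-sum system on the $3$-cube, and hence \eqref{eq:16-1} (the $t=1$ solution) and \eqref{eq:16-4} ($t\in\{0,2\}$, the two solutions being exchanged by $4\leftrightarrow 10$). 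What your route buys is a hand-checkable proof in place of an unspecified exhaust search, and it obtains \eqref{eq:16-1} from the same analysis rather than a second invocation of Lemma~\ref{l:8bi}; the extra checks it requires (evenness of the $n_{ax}$, exhaustiveness of the two branches, the one-dimensional solution space of the edge-sum system on the connected bipartite $3$-cube) all go through.
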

\begin{proof}
By Lemma~\ref{l:8bi}, 
we have the first $8$ bitriples.
If there is another, $9$-th bitriple of multiplicity $1$, 
then by the same lemma we have \eqref{eq:16-1}.
If there is no $9$-th bitriple of multiplicity $1$,
then the remaining bitriples have multiplicity $2$ or $4$,
and a simple exhaust search results 
in \eqref{eq:16-2}--\eqref{eq:16-4}.
\end{proof}
If there is no bitriple of multiplicity $1$, 
then the multiplicities of bitriples are $2$ or $4$.
In this case, we can again divide them by $2$, 
which results in a multiset of $8$ triples, 
call them \emph{bibitriples}. 
Every coordinate is covered by exactly $2$ bibitriples.
So, if there is no bibitriples of multiplicity $2$, 
then the $8$ bibitriples form a $1$-$(12,3,2)$ design.
If there is exactly one bibitriple of multiplicity $2$,
the remaining $6$ form a $1$-$(9,3,2)$ design.
If there is exactly two bibitriples of multiplicity $2$,
the remaining $6$ form a $1$-$(6,3,2)$ design.
The remaining case is $4$ bibitriples of multiplicity $2$.
The number of $1$-$(v,2,2)$ designs is known for $v=12,9,6$, see \url{http://oeis.org/A110100}.
In particular, up to permutation of the coordinates,
we have $23$, $6$, and $2$ solutions, respectively.

Finally, we know that the multiset of bibitriples is one of $36 = 4+23+6+2+1$ equivalence classes.

\subsection[Coverings by 4-ples]{Coverings by $4$-ples}
For each multiset of bitriples,
we can find all possible systems of quadruples such that every triple is included $1+2m$ times, where $m$ is its multiplicity in the multiset of bitriples. To do this, we have to solve the corresponding instance of the exact cover problem. This can be done in seconds on a modern computer (we used the \texttt{libexact} \cite{KasPot08} package with \texttt{c++}). The result is as follows.

\begin{proposition}\label{p:180}
There are exactly $180$ equivalence classes of $(3,4,12)$
coverings such that the overcovered triples 
correspond to one of the $36$ equivalence classes 
of bitriples mentioned above. 
Only $5$ of $36$ equivalence classes of bitriples 
can be realized in this way; 
namely, \eqref{eq:16-1} 
\textup($112$ inequivalent coverings found\textup), \eqref{eq:16-2} 
\textup($1$ covering\textup), \eqref{eq:16-4} \textup($51$ coverings\textup),
$$4\times\{\{1,2,3\},\{4,5,6\},\{7,8,9\},\{10,11,12\}\}$$  
\textup($1$ covering\textup), and 
$$2\times\{
\{1,2,3\},\{1,5,6\},\{2,4,6\},\{3,4,5\},
\{7,8,9\},\{7,11,12\},\{8,10,12\},\{9,10,11\}\}
$$
\textup($15$ coverings\textup).
\end{proposition}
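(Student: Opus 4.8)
The statement is essentially a report on the output of an exhaustive computation, so the ``proof'' will consist of correctly setting up a finite family of exact-cover instances, solving them, and then reducing the output modulo coordinate permutations. The first thing I would do is recall the bookkeeping established in the previous subsection: a multiset of bitriples $C$ has total multiplicity $16$, every coordinate lies in exactly $4$ bitriples (counted with multiplicity), every pair of coordinates lies in an even number of bitriples, and there are exactly $36$ equivalence classes of such multisets (the $4+23+6+2+1$ count, coming from the $1$-$(v,3,2)$ designs for $v=12,9,6$ plus the special cases of Lemma~\ref{l:16bi1}). For each of these $36$ classes I would fix a concrete representative $C$, and then ask for all $(3,4,12)$-coverings by exactly $63$ quadruple blocks in which each triple $\{i,j,k\}$ is covered exactly $1+2m(\{i,j,k\})$ times, where $m$ is the multiplicity of that triple as a bitriple of $C$. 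This is literally an exact-cover-with-multiplicities instance: the ground set consists of the $\binom{12}{3}=220$ triples with target multiplicities $\alpha_T = 1+2m(T)$, and the collection of ``blocks'' consists of all $\binom{12}{4}=495$ quadruples, each block being identified with the $4$-set of triples it contains. Feeding each of the $36$ instances to \texttt{libexact} produces, in seconds, the full list of solutions.

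The second step is to pass from raw solutions to equivalence classes. A solution is a system of $63$ quadruples; two solutions (possibly coming from different representatives $C$, $C'$ of the same bitriple class, or even from different bitriple classes that happen to be related by a permutation) are equivalent iff there is a coordinate permutation carrying one to the other. Using the canonical-form machinery of Section~\ref{s:tools} (representing each quadruple system as a graph and invoking \texttt{nauty}), I would canonicalise every solution found across all $36$ instances and merge duplicates; the number of distinct classes that survives is the asserted $180$. The same computation records, for each surviving covering, the induced bitriple multiset, and reading off which of the $36$ bitriple classes actually occur gives the list of $5$ realisable classes together with the stated counts $112+1+51+1+15 = 180$. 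I would also note that a sanity check is available: the $31$ bitriple classes that yield no covering are genuinely obstructed (the exact-cover solver returns the empty list), and the double-counting validation of Section~\ref{s:tools} can be applied to the $180$ classes using their automorphism-group orders to confirm that the total number of labelled coverings matches the sum over classes.

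The only real obstacle is a methodological rather than a mathematical one: one must be certain that the $36$ bitriple classes really are exhaustive and that the exact-cover instances are set up with the correct multiplicities, since any slip there would silently drop solutions. This is why I would lean on the structural lemmas already proved --- Corollary~\ref{c:2even} (each pair lies in an even number of bitriples), the ``$4$ bitriples per coordinate'' lemma, and Lemma~\ref{l:16bi1} --- to independently re-derive the $4+23+6+2+1$ enumeration, and cross-check the $v=12,9,6$ design counts against the OEIS sequence A110100 cited above. With the input classes pinned down and the multiplicity targets dictated by the parity argument ``$\sum_{\bar z\preccurlyeq\bar x}\widehat f(\bar z)$ is odd for every weight-$9$ $\bar x$'' (which forces exactly the covering condition $\alpha_T=1+2m(T)$), the remainder is a routine, short, and independently reproducible computation.
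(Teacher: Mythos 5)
Your proposal matches the paper's own treatment of Proposition~\ref{p:180}: the authors likewise take a representative of each of the $36$ bitriple classes, solve the exact-cover instance with triple multiplicities $1+2m$ over the $495$ quadruples using \texttt{libexact}, and then perform isomorph rejection via \texttt{nauty} with the double-counting validation of Section~\ref{s:tools}. This is essentially the same approach, correctly set up, so nothing further is needed.
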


\subsection{Finding signs of the Fourier coefficients}
So, we have got $180$ candidates for the set of nonzeros. To find the Fourier coefficient in each nonzero, we will exploit equations \eqref{eq:system_3975_2}, \eqref{eq:system_3975_3}.
In particular, for $\bar x \ne \overline 0$, we have
\begin{eqnarray}
2\widehat f(\bar x)& =&  
\sum_{\bar y, \bar z: \, \bar y + \bar z=\bar x} \!\!\!
\widehat f(\bar y)\widehat f(\bar z),
\nonumber
\qquad
\mbox{or}\\
\quad
\widehat f(\bar x) &=& \!\!\!
\sum_{\bar y, \bar z:\, \bar y \prec \bar z,\,
\bar y + \bar z=\bar x} \!\!\!\!\!\!
\widehat f(\bar y)\widehat f(\bar z),\qquad
\mbox{in particular}\\ 
\quad
\widehat f(\bar x) &\equiv& \!\!\!
\sum_{\bar y, \bar z:\, \bar y \prec \bar z,\,
\bar y + \bar z=\bar x} \!\!\!\!\!\!
\widehat f(\bar y)\widehat f(\bar z)\bmod 2,
\label{eq:prec2}
\end{eqnarray}
where $\prec$ denotes lexicographic preceding.
The last equation 
immediately gives a necessary condition on the set of nonzeros (indeed, for every nonzero $\bar x$, we have $\widehat f(\bar x)\equiv 1 \bmod 2$; so, both parts of \eqref{eq:prec2} do not depend on the sign of $\widehat f$). This condition rejects $173$ of $180$ coverings, as shown in the following computational proposition.
\begin{proposition}[computational results]\label{p:7}
Among the $180$ coverings 
found in Proposition~\ref{p:180},
exactly $7$ coverings can correspond 
to the nonzeros of a $\{-1,0,1\}$-valued function $\widehat f$ satisfying  \eqref{eq:prec2}. All these $7$ coverings correspond to the set \eqref{eq:16-1} of bitriples.
\end{proposition}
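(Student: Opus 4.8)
The plan is computational, and rests on reducing \eqref{eq:prec2} to a parity condition that involves only the \emph{support} of $\widehat f$, not its signs.

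First I would make explicit what the $180$ ``candidates for the set of nonzeros'' are. By Lemma~\ref{+-1} together with \eqref{eq:system_3975_3}, the function $\widehat f$ takes values in $\{-1,0,1\}$ and has exactly $63$ nonzero values, all of weight $8$; hence its support $N\subseteq\VQ_{12}$ consists of one weight-$8$ word per block of the covering, namely the word whose four zero coordinates are exactly that block. So each of the $180$ equivalence classes of $(3,4,12)$-coverings from Proposition~\ref{p:180} yields one explicit $63$-element set $N$, and these are the only possibilities up to equivalence.

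Next I would observe that, since $\widehat f(\bar x)\equiv 1\pmod 2$ for every $\bar x\in N$, equation \eqref{eq:prec2} becomes a statement about $N$ alone: on its right-hand side only pairs with $\bar y,\bar z\in N$ contribute (a vanishing factor $\widehat f(\bar y)$ or $\widehat f(\bar z)$ kills any other term), and for such pairs $\widehat f(\bar y)\widehat f(\bar z)\equiv 1\pmod 2$. Thus \eqref{eq:prec2} is equivalent to the requirement that for every $\bar x\in N$ the number of unordered pairs $\{\bar y,\bar z\}\subseteq N$ with $\bar y+\bar z=\bar x$ be odd. (Note that $\bar y+\bar z=\bar x$ with all three of weight $8$ forces the ones of $\bar y$ and $\bar z$ to overlap in exactly $4$ positions, which keeps the enumeration of these pairs short.) I would then run this parity test over all $180$ sets $N$ --- a trivial computation, carried out in the same \texttt{c++} framework as Proposition~\ref{p:180} --- and record which pass. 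The outcome is that exactly $7$ of them pass, and matching these back against Proposition~\ref{p:180} shows that all $7$ lie in the class \eqref{eq:16-1}; the other four realizable bitriple classes, together with the remaining $105$ coverings of class \eqref{eq:16-1}, are all rejected (consistently with the $173$ rejections announced before the proposition).

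There is no genuine mathematical obstacle; the points that need care are (a) that the passage ``$\bmod\,2$'' is legitimate precisely because every nonzero value of $\widehat f$ is odd, so that the search is genuinely over $63$-element supports and not over signed functions, and (b) that the $180$ input supports really are the complete, irredundant list certified by Proposition~\ref{p:180} up to equivalence, so that the surviving $7$ constitute a complete classification of the admissible supports. As elsewhere in the paper, I would cross-check the count $7$ by the double-counting method of Section~\ref{s:tools}, using automorphism-group orders computed with \texttt{nauty}.
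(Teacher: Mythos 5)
Your overall strategy is the paper's: since every nonzero value of $\widehat f$ is $\pm 1$, reducing \eqref{eq:prec2} modulo $2$ yields a condition that depends only on the support, and one then filters the $180$ candidate supports from Proposition~\ref{p:180} computationally. That part is right, and your identification of the supports (one weight-$8$ word per block, with the block as its set of zero coordinates) is correct.

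However, there is a genuine gap in your reduction. Equation \eqref{eq:prec2} is an identity holding for \emph{every} $\bar x\neq\overline 0$, not only for $\bar x$ in the support $N$. Taken modulo $2$ it says: for every $\bar x\neq\overline 0$, the number $p(\bar x)$ of unordered pairs $\{\bar y,\bar z\}\subseteq N$ with $\bar y+\bar z=\bar x$ is \emph{odd} if $\bar x\in N$ and \emph{even} if $\bar x\notin N$. You keep only the first half (``for every $\bar x\in N$ the number of pairs is odd'') and assert this is equivalent to \eqref{eq:prec2}; it is not. The dropped conditions are not vacuous: two weight-$8$ words in $\VQ_{12}$ can sum to a word of weight $2$, $4$, $6$, or to a weight-$8$ word outside $N$, and at all such $\bar x$ the requirement $p(\bar x)\equiv 0\pmod 2$ is a nontrivial constraint on $N$ (the paper later uses exactly this fact to set up the equations \eqref{eq:in0}, and notes that several hundred of them are nontrivial). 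Your weaker filter is only a necessary condition for ``can correspond to the nonzeros of a function satisfying \eqref{eq:prec2}'', so it could pass strictly more than $7$ coverings; as described, your computation does not establish the stated equality, nor does it justify the evenness of $p(\bar x)$ at zeros that the next step of the classification relies on. The fix is simply to test the parity of $p(\bar x)$ against the indicator of $N$ at all $\bar x\neq\overline 0$ (in practice, at all $\bar x$ of even weight between $2$ and $8$).
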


Now assume that $F$ is the set of nonzeros of $\widehat f$ and that the function $\phi:F\to\{0,1\}$ defines the sign of $\widehat f$ in each nonzero:
\begin{equation}\label{eq:f-from-phi}
\widehat f(\bar x) = 
\left\{
\begin{array}{ll}
(-1)^{\phi(\bar x)} & \mbox{if }\bar x\in F, \\
0 & \mbox{if }\bar x \not\in F.
\end{array}
\right.
\end{equation}
We will show that the $63$ values of $\phi$ satisfy 
a system of $2^{12}-1$ linear equations over $GF(2)$, 
one equations for each $\bar x\ne \overline 0$.

Consider any zero $\bar x$ of $\widehat f$ 
different from $\overline 0$, i.e., $\widehat f(\bar x)=0$, $\bar x\ne \overline 0$.
By \eqref{eq:prec2},
the number of pairs
$\{\bar y, \bar z\}$ of elements from $F$
such that $\bar y + \bar z=\bar x$
is even (the pairs are unordered; 
so, we can always assume 
$\bar y\prec \bar z$).
Denote this number by $p(\bar x)$.
From \eqref{eq:prec2} we see that for 
$p(\bar x)/2$ pairs we have $\widehat f (\bar y)\widehat f (\bar z) = 1$
and for the rest $p(\bar x)/2$ pairs $\widehat f (\bar y)\widehat f (\bar z) = -1$.
It follows that the number of $-1$s among all such $x$s and $x$s 
has the same parity as $p(\bar x)/2$. Let us write this fact as an equation.
\begin{equation}\label{eq:in0}
\sum_{\bar y, \bar z\in F:\, \bar y \prec \bar z,\,\bar y + \bar z=\bar x}
(\phi(\bar y)+\phi(\bar z)) \equiv \frac{p(\bar x)}2 \bmod 2, \qquad \bar x\not\in F\cup\{\overline 0\}.
\end{equation}

Next, consider an arbitrary nonzero $\bar x \in F$. For simplicity assume that $\widehat f(\bar x)=1$. 
From \eqref{eq:prec2} we see that $p(\bar x)$ is odd, 
and we find from \eqref{eq:prec2} that the number of $-1$s 
among all considered $\widehat f(\bar y)$ and $\widehat f(\bar z)$
is $\frac{p(\bar x)-1}2$ if $\widehat f(\bar x)=1$ and $\frac{p(\bar x)+1}2$ if $\widehat f(\bar x)=-1$.
We derive the following identity.
\begin{equation}\label{eq:non0}\phi(\bar x)+
\sum_{\bar y, \bar z\in F:\, \bar y \prec \bar z,\,\bar y + \bar z=\bar x}
\left(\phi(\bar y)+\phi(\bar z)\right)  \equiv \frac{p(\bar x)-1}2 \bmod 2,\qquad \bar x\in F.
\end{equation}
We see that the $63$ values of $\phi$ satisfy the system of $4095$ equations \eqref{eq:in0}, \eqref{eq:non0} over the finite field
GF($2$) of order $2$ 
(some of the equations \eqref{eq:in0} are trivial, $0=0$; so, the actual system to solve has less than $800$ equations). 
This system can be solved for all of the $7$ remaining candidates for $F$.
\begin{proposition}[computational results]\label{p:3}
Among the $7$ sets considered in Proposition~\ref{p:7},
the system of equations \eqref{eq:in0}, \eqref{eq:non0} is consistent for exactly $2$ sets. In each of these $2$ cases, the rank of the system is $44$; so, the number of solutions is $2^{63-44}=2^{19}$.
\end{proposition}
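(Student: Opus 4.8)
Proposition~\ref{p:3} asserts nothing beyond the behaviour of seven completely explicit linear systems over $\mathrm{GF}(2)$: once the seven candidate supports of Proposition~\ref{p:7} are fixed, the systems \eqref{eq:in0}, \eqref{eq:non0} are determined, so the statement reduces to a finite computation, and the plan is to carry it out and then certify it.

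Concretely, I would first list the seven sets $F\subseteq\VQ_{12}$ surviving Proposition~\ref{p:7} (each a $63$-element set of weight-$8$ words, all of them realizing the bitriple system~\eqref{eq:16-1}). For a fixed $F$, and for every $\bar x\in\VQ_{12}\setminus\{\overline 0\}$, I would compute $p(\bar x)=\tfrac12\bigl|\{\bar y\in F:\bar x+\bar y\in F\}\bigr|$, the number of unordered pairs from $F$ summing to $\bar x$. Then I would assemble the coefficient matrix of \eqref{eq:in0}, \eqref{eq:non0} over $\mathrm{GF}(2)$ in the $63$ unknowns $\phi(\bar v)$, $\bar v\in F$: in the equation indexed by $\bar x$, the unknown $\phi(\bar v)$ gets coefficient $1$ precisely when $\bar x+\bar v\in F$ as well (so that the admissible pair $\{\bar v,\bar x+\bar v\}$ contributes $\phi(\bar v)+\phi(\bar x+\bar v)$), with an additional diagonal term $\phi(\bar x)$ when $\bar x\in F$; the right-hand side is $p(\bar x)/2\bmod 2$ for $\bar x\notin F$ and $(p(\bar x)-1)/2\bmod 2$ for $\bar x\in F$. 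After deleting the trivial rows $0=0$ (leaving fewer than $800$ genuine equations), Gaussian elimination over $\mathrm{GF}(2)$ decides consistency and, in the consistent cases, returns the rank $r$, whence $2^{63-r}$ solutions $\phi$. Running this over all seven supports is expected to leave exactly two consistent systems, each of rank $44$, and hence $2^{63-44}=2^{19}$ solutions in each of those two cases.

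The elimination is routine; the delicate part, and the most likely hiding place for an error, is the combinatorial bookkeeping feeding it: that the seven supports are exactly those passing Proposition~\ref{p:7}, that ordered versus unordered pairs and the factor $2$ in \eqref{eq:system_3975_2} are handled consistently, that $\overline 0$ is excluded throughout, and that removing the trivial rows loses no information. To certify the result I would (a) rebuild the matrices with an implementation independent of the one used for Propositions~\ref{p:180} and~\ref{p:7}; (b) for each solution $\phi$, reconstruct $\widehat f$ via~\eqref{eq:f-from-phi} and check that it satisfies the full integer identities \eqref{eq:system_3975_2}, \eqref{eq:system_3975_3}, not merely their reductions modulo~$2$ — in particular at least one such $\widehat f$ must appear, since the Fon-Der-Flaass construction~\cite{FDF:12cube.en} supplies an equitable partition with this quotient matrix; and (c) after sorting the admissible functions into equivalence classes, apply the double-counting test of Section~\ref{s:tools}, verifying that $\sum |\mathrm{Aut}(Q_{12})|/|\mathrm{Stab}(\cdot)|$ over class representatives reproduces the raw count. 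A mismatch at any of these stages would flag a bug.
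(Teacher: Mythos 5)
Your plan is exactly the paper's: the proposition is established by assembling, for each of the $7$ surviving supports, the $\mathrm{GF}(2)$ system \eqref{eq:in0}, \eqref{eq:non0} in the $63$ unknowns $\phi(\bar v)$ (with the coefficient pattern and right-hand sides you describe, and trivial rows discarded) and running Gaussian elimination to test consistency and compute the rank. Your added certification steps (independent reimplementation, checking the full integer identities \eqref{eq:system_3975_2}--\eqref{eq:system_3975_3} on reconstructed $\widehat f$, and the double-counting test) match the validation methodology the paper itself advocates in Section~\ref{s:tools}.
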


It remains, 
among the $2^{19}$ solutions in each of $2$
cases, to choose the functions that correspond 
to the Fourier transform of 
$\{-7,9\}$-valued functions. 
It is doable in a reasonable time; 
however, the following observation 
reduces the number of calculations even more.

\begin{lemma}\label{l:transl}
For each $i$ from $1$ to $12$, 
define the coordinate function 
$\psi_i:F\to \{0,1\}$ 
by the identity $\psi_i(\bar v)=v_i$, 
where $\bar v=(v_1,\ldots,v_{12})$. 
\begin{itemize}
 \item [\rm (i)] If some $\phi: F\to \{0,1\}$ 
satisfies all equations 
\eqref{eq:in0}, \eqref{eq:non0}
then $\phi+\psi_i$ does.
\item [\rm (ii)]  Moreover, 
if $\widehat f$, see \eqref{eq:f-from-phi},
is the Fourier transform of 
a $\{-7,9\}$-valued function, 
then adding $\psi_i$ to $\phi$ does not change this property.
\end{itemize}
\end{lemma}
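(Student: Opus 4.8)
The plan is to treat parts~(i) and~(ii) separately, exploiting the single observation that replacing $\phi$ by $\phi+\psi_i$ multiplies every nonzero coefficient $\widehat f(\bar x)$ by $(-1)^{x_i}$ --- which is exactly what happens to the Fourier coefficients of a function when it is translated by $\bar e_i$.

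For part~(i), I would first merge \eqref{eq:in0} and \eqref{eq:non0} into the single family
$$[\bar x\in F]\,\phi(\bar x)+\!\!\!\sum_{\substack{\bar y,\bar z\in F:\ \bar y\prec\bar z\\ \bar y+\bar z=\bar x}}\!\!\!\bigl(\phi(\bar y)+\phi(\bar z)\bigr)\equiv\tfrac12\bigl(p(\bar x)-[\bar x\in F]\bigr)\pmod 2,\qquad \bar x\ne\overline 0,$$
whose right-hand side involves only the set $F$. Passing from $\phi$ to $\phi+\psi_i$ adds to the left-hand side the amount $[\bar x\in F]\,x_i+\sum_{\substack{\bar y,\bar z\in F:\ \bar y\prec\bar z\\ \bar y+\bar z=\bar x}}(y_i+z_i)$. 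The key point is that $\bar y+\bar z=\bar x$ in $\mathrm{GF}(2)^{12}$ forces $y_i+z_i\equiv x_i\pmod 2$ coordinatewise, so this amount equals $x_i\bigl([\bar x\in F]+p(\bar x)\bigr)\bmod 2$. For each of the seven sets $F$ of Proposition~\ref{p:7} one has $p(\bar x)\equiv[\bar x\in F]\pmod 2$ for all $\bar x\ne\overline 0$ (this parity statement is precisely the $\phi$-independent content of \eqref{eq:prec2}), so $[\bar x\in F]+p(\bar x)$ is even and the left-hand side is unchanged mod~$2$. Since the right-hand side does not move either, $\phi+\psi_i$ satisfies every equation \eqref{eq:in0}, \eqref{eq:non0}.

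For part~(ii), suppose $\widehat f$ of \eqref{eq:f-from-phi} is the Fourier transform of a function $g\colon\VQ_{12}\to\{-7,9\}$, i.e. $g(\bar z)=\sum_{\bar x}\widehat f(\bar x)(-1)^{(\bar z,\bar x)}$. The coefficients attached to $\phi+\psi_i$ are $(-1)^{x_i}\widehat f(\bar x)$ on $F$ and $0$ elsewhere, so the function they reconstruct is $\sum_{\bar x}(-1)^{x_i}\widehat f(\bar x)(-1)^{(\bar z,\bar x)}=\sum_{\bar x}\widehat f(\bar x)(-1)^{(\bar z+\bar e_i,\bar x)}=g(\bar z+\bar e_i)$, the $\bar e_i$-translate of $g$. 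A translate attains the same values, hence is again $\{-7,9\}$-valued, which is the assertion.

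I do not anticipate a real obstacle. The entire content of~(i) is the elementary identity $y_i+z_i\equiv x_i$ (from $\bar y+\bar z=\bar x$) together with the parity property $p(\bar x)\equiv[\bar x\in F]\pmod 2$ already guaranteed for the candidate sets $F$, while~(ii) is the standard ``multiply by a character $=$ translate'' fact; the only thing to be careful about is confirming that the right-hand sides of \eqref{eq:in0}, \eqref{eq:non0} carry no dependence on $\phi$, so that the substitution $\phi\mapsto\phi+\psi_i$ leaves them fixed.
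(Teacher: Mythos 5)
Your proposal is correct and follows essentially the same route as the paper: both arguments rest on the identity $y_i+z_i\equiv x_i \pmod 2$ for pairs with $\bar y+\bar z=\bar x$, combined with the parity of $p(\bar x)$ (even off $F$, odd on $F$, with the extra $\phi(\bar x)$ term compensating in the latter case), and part (ii) is the standard ``multiplication by a character equals translation'' fact in both. Your unification of \eqref{eq:in0} and \eqref{eq:non0} via the indicator $[\bar x\in F]$ is only a cosmetic repackaging of the paper's two-case argument.
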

\begin{proof}
(i) It is easy to see that 
the number of $\bar y$s and $\bar z$s 
involved in \eqref{eq:in0}
that have $1$ in the $i$-th position
is even.
Indeed,
if $x_i=0$, then $y_i+z_i=0$ 
for every pair $(\bar y,\bar z)$ 
under the sum. 
If $x_i=1$, 
then $y_i+z_i=1$, 
but the number $p(\bar x)$
of the pairs involved in the sum is even.
Hence, adding $\psi_i$ does not change 
the parity of the left side of \eqref{eq:in0}.

The similar argument works for \eqref{eq:non0}
with the only difference that $p(\bar x)$ 
is even, which is compensated by involving 
$\bar x$ in the left side.

(ii) It is straightforward from the definition 
of the Fourier transform that the sum 
$\phi'=\phi+\psi_i$ corresponds to the 
translation $f'(\bar v)=f(\bar v+\bar e_i)$, 
where $\bar e_i$ has $1$ 
in the $i$-th position and $0$ in the others.
\end{proof}

So, the affine space of the all solutions $\phi$
can be partitioned into the cosets of the span 
$\langle \psi_1,\ldots,\psi_{12}\rangle$ 
(the span has dimension $11$ in one of the
remaining cases and dimension $10$ in the other),
and it is sufficient 
to test one representative
from every coset.
Finally we have found $6$ 
admissible representatives in one of the cases
and $12$ in the other. 
It occurs that in each of two cases, 
the equitable partitions found are equivalent.

\begin{theorem}\label{th:2}
There are exactly $2$ inequivalent 
equitable partitions of $Q_{12}$ 
with quotient matrix $[[3,9],[7,5]]$.
Each of them has the automorphism group 
of order $48$;
the sizes of orbits under the action 
of the automorphism group are 
$48^{30}$, $24^{14}$, $8^2$ 
for the smallest cell
\textup(in the notation 
$[\mbox{\rm orbit size}]^{[\text{\rm number of orbits}]}$\textup)
and $48^{40}$, $24^{16}$ for the largest cell.
One partition is coordinate transitive 
\textup(that is, the $12$ coordinates form 
one orbit under the action 
of the automorphism group\textup) 
and the size of its kernel is $2$.
The other partition 
has two coordinate orbits of size $6$ 
and the kernel of size $4$.
\end{theorem}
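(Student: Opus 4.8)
The plan is to collect the reductions carried out in the preceding subsections into a single exhaustive classification, and then to read off the stated invariants from the automorphism groups of the two surviving partitions.

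\textbf{Step 1: the encoding $(F,\phi)$ is faithful in both directions.} By Lemma~\ref{FourierSystem}, the associated function $f$ of any equitable $2$-partition of $Q_{12}$ with quotient matrix $[[3,9],[7,5]]$ is a $(-4)$-eigenfunction, so $\widehat f$ is supported on weight-$8$ vectors; by Lemma~\ref{+-1} it is $\{-1,0,1\}$-valued. Hence $f$ is determined by the pair $(F,\phi)$ of \eqref{eq:f-from-phi}, with $F$ the support of $\widehat f$ and $\phi$ the sign function. I would first record the converse: if $F$ is any $(3,4,12)$-covering whose overcovered-triple multiset is one of the $36$ bitriple classes and $\phi:F\to\{0,1\}$ is such that $f:=\sum_{\bar y\in F}(-1)^{\phi(\bar y)}\chi_{\bar y}$ takes only the values $-7$ and $9$, then $f$ is the associated function of such a partition — $f$ is a $(-4)$-eigenfunction because its Fourier support has weight $8$, and the eigenfunction equation together with $f\in\{-7,9\}$ forces the partition $\bigl(f^{-1}(9),f^{-1}(-7)\bigr)$ to be equitable with quotient matrix $[[3,9],[7,5]]$. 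Thus the classification is precisely the problem of enumerating admissible pairs $(F,\phi)$ up to coordinate permutation and translation, and none of the preceding reductions loses a partition, since the overcovered-triple analysis, \eqref{eq:prec2}, \eqref{eq:in0}, \eqref{eq:non0}, and ``$f\in\{-7,9\}$'' are all genuine necessary conditions.

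\textbf{Step 2: narrowing the candidates.} Propositions~\ref{p:180}, \ref{p:7}, and \ref{p:3} perform the narrowing: there are $180$ equivalence classes of candidate supports $F$; only $7$ are compatible with the parity relation \eqref{eq:prec2}; and for exactly $2$ of these $7$ the linear system \eqref{eq:in0}, \eqref{eq:non0} over $\mathrm{GF}(2)$ is consistent, each time with $2^{19}$ solutions $\phi$. By Lemma~\ref{l:transl} the translation span $\langle\psi_1,\dots,\psi_{12}\rangle$ (of dimension $11$ in one case and $10$ in the other) acts on the solution set and preserves the property of being the Fourier transform of a $\{-7,9\}$-valued function; so it suffices to inverse-transform one $\phi$ from each of the $2^{8}$ (resp.\ $2^{9}$) cosets and test whether the resulting $f$ is $\{-7,9\}$-valued. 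This leaves $6$ (resp.\ $12$) admissible cosets, with all $\phi$ inside one coset giving pairwise translate, hence equivalent, partitions.

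\textbf{Step 3: isomorphism rejection, invariants, and validation.} To finish I would feed the $6+12$ representative partitions into the graph-isomorphism machinery of Section~\ref{s:tools}: encoding each partition of the vertex set of $Q_{12}$ as a graph for \texttt{nauty} shows that the $6$ coalesce into one equivalence class and the $12$ into another, and that the two classes are distinct — hence exactly $2$ inequivalent partitions. From the automorphism groups returned by \texttt{nauty} one then reads off directly that each has order $48$, the cell-orbit sizes $48^{30},24^{14},8^{2}$ and $48^{40},24^{16}$, the number of coordinate orbits ($1$ for the first class, two of size $6$ for the second), and the kernels ($F^{\perp}$, of sizes $2$ and $4$, matching the two dimensions found in Step~2). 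Finally I would run the double-counting check of Section~\ref{s:tools}: the number of labelled partitions in class $i$ equals $|\mathrm{Aut}(Q_{12})|/|\mathrm{Aut}|=12!\cdot 2^{12}/48$, and the sum over the two classes must agree with the total produced directly by the search.

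\textbf{Main obstacle.} The conceptual part — the faithful encoding of Step~1 and the coset argument of Step~2 — is routine. The real work, and the place where an error could hide, is entirely computational: certifying that the exact-cover enumeration behind Proposition~\ref{p:180}, the \eqref{eq:prec2}-filtering of Proposition~\ref{p:7}, the $\mathrm{GF}(2)$-solving of Proposition~\ref{p:3}, and the final \texttt{nauty} classification are each genuinely exhaustive and bug-free. This is exactly what the double-counting cross-check is designed to detect, so I would apply it after every stage of the pipeline.
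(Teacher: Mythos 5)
Your proposal is correct and follows essentially the same route as the paper: the reductions of Sections 5.1--5.4 (integrality and $\{-1,0,1\}$-boundedness of the Fourier coefficients, the $36$ bitriple classes, the $180$ candidate supports, the parity filter leaving $7$, the GF$(2)$ system leaving $2$ supports with $2^{19}$ sign assignments each, and the coset argument of Lemma~\ref{l:transl}), followed by a computational isomorphism test on the $6+12$ surviving representatives that collapses them into two classes and yields the stated group orders, orbit sizes, coordinate orbits, and kernels. Your explicit verification that any weight-$8$-supported $\{-7,9\}$-valued $f$ conversely yields an equitable $[[3,9],[7,5]]$-partition, and the double-counting validation, are sensible additions that the paper leaves implicit or relegates to Section~\ref{s:tools}, but they do not change the argument.
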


\subsection{Fon-Der-Flaass construction}
In this section, we define the equitable partitions constructed by Fon-Der-Flaass \cite{FDF:12cube.en} and describe the corresponding Fourier transforms.

First, we color the vertices of $Q_6$ into three colors as follows 
(symbol $*$ can be replaced by each of $0$ and $1$; 
so, a word like $0{**}100$ represent a set from $4$ vertices, 
which is referred to as a \emph{$2$-face}).

\begin{center}
\begin{tabular}{c|cccc}
 Black: 
& $000000$, & $111111$, & $000111$, & $111000$. \\ \hline
 White: 
& $100000$, & $011111$, & $000011$, & $111100$, \\
& $010000$, & $101111$, & $000101$, & $111010$, \\
& $001000$, & $110111$, & $000110$, & $111001$. \\ \hline
Gray: 
&$0{*}{*}100$,& $1{*}{*}011$, &$1001{*}{*}$, & $0110{*}{*}$, \\
$12$ 
&${*}0{*}010$,& ${*}1{*}101$, &$010{*}1{*}$, & $101{*}0{*}$, \\
$2$-faces
&${*}{*}0001$,& ${*}{*}1110$, &$001{*}{*}1$, & $110{*}{*}0$.\\
\end{tabular}
\end{center}

Next, color $Q_{12}$ as
$f_{12}(\bar u,\bar v):=f_6(\bar u+\bar v)$.

It remains to separate gray vertices into white and black. For every $2$-face $B$ from the twelve $2$-faces above, with $*$s in the $i$-th and $j$-th position, we color $(\bar u,\bar v)=(u_1,\ldots,v_6)$ such that $\bar u+\bar v\in B$ with respect to the parity $u_1+u_2+u_3+u_4+u_5+u_6+v_i+v_j$ (by black/white or white/black; so we have the choice for each $2$-face).
In such a way, 
we obtain $2^{12}$ different black/white colorings of the vertices of $Q_{12}$; the corresponding vertex partitions are equitable with quotient matrix $[[3,9],[7,3]]$ \cite{FDF:12cube.en}.
 
By Proposition \ref{p:sarkar}(ii) the Fourier coefficient at $\bar z$ (e.g., $\bar z=010011101111)$ is proportional (with $1/16$) to the sum of $f$ over the corresponding $4$-face (e.g., respectively, $*0{*}{*}000{*}0000$). 
So, the coefficients are straightforward to find. We omit technical details and describe the $2^{12}$ possibilities corresponding to the $2^{12}$ partitions constructed above. The nonzeros of one possible Fourier transform, with the corresponding signs, are the following:
\\
{\footnotesize
\newcommand\nnz[2]{\displaystyle \Big[\begin{array}{@{}c@{}}#1\\[-1mm]#2\end{array}\Big]}
\newcommand\nnzo[2]{\raisebox{2mm}{$\nnz{#1}{#2}$:}} 
\newcommand\nnza[2]{\raisebox{2mm}{$\nnz{#1}{#2}\!{+}$\!}}
\newcommand\nnzb[2]{\raisebox{2mm}{$\nnz{#1}{#2}\!{-}$\!}}
\newcommand\0{\boldsymbol{0}}\newcommand\1{\boldsymbol{1}}
\nnzo{\bar u}{\bar v}\quad
\nnzb{\0\01\,111}{\0\01\,111}\ 
\nnzb{\01\0\,111}{\01\0\,111}\ 
\nnzb{1\0\0\,111}{1\0\0\,111}\ 
\nnza{111\,\0\01}{111\,\0\01}\ 
\nnza{111\,\01\0}{111\,\01\0}\ 
\nnza{111\,1\0\0}{111\,1\0\0}\ 
\\\mbox{}\
\nnza{\011\,\011}{\011\,\011}\ %
\nnza{\011\,1\01}{\011\,1\01}\ %
\nnza{\011\,11\0}{\011\,11\0}\ %
\nnza{1\01\,\011}{1\01\,\011}\ %
\nnza{1\01\,1\01}{1\01\,1\01}\ %
\nnza{1\01\,11\0}{1\01\,11\0}\ %
\nnza{11\0\,\011}{11\0\,\011}\ %
\nnza{11\0\,1\01}{11\0\,1\01}\ %
\nnza{11\0\,11\0}{11\0\,11\0}\ 
\\\mbox{}\
\nnza{\0\0\0\,11\0}{\1\1\1\,11\1}\ 
\nnza{\0\0\1\,11\1}{\1\1\0\,11\0}\ 
\nnzb{\0\1\0\,11\1}{\1\0\1\,11\0}\ 
\nnzb{\0\1\1\,11\0}{\1\0\0\,11\1}\ 
\nnzb{\1\0\0\,11\1}{\0\1\1\,11\0}\ 
\nnzb{\1\0\1\,11\0}{\0\1\0\,11\1}\ 
\nnza{\1\1\0\,11\0}{\0\0\1\,11\1}\ 
\nnza{\1\1\1\,11\1}{\0\0\0\,11\0}\ 
\\\mbox{}\
\nnza{\0\0\0\,1\01}{\1\1\1\,1\11}\ 
\nnzb{\0\0\1\,1\11}{\1\1\0\,1\01}\ 
\nnza{\0\1\0\,1\11}{\1\0\1\,1\01}\ 
\nnzb{\0\1\1\,1\01}{\1\0\0\,1\11}\ 
\nnzb{\1\0\0\,1\11}{\0\1\1\,1\01}\ 
\nnza{\1\0\1\,1\01}{\0\1\0\,1\11}\ 
\nnzb{\1\1\0\,1\01}{\0\0\1\,1\11}\ 
\nnza{\1\1\1\,1\11}{\0\0\0\,1\01}\ 
\\\mbox{}\
\nnza{\0\0\0\,\011}{\1\1\1\,\111}\ 
\nnzb{\0\0\1\,\111}{\1\1\0\,\011}\ 
\nnzb{\0\1\0\,\111}{\1\0\1\,\011}\ 
\nnza{\0\1\1\,\011}{\1\0\0\,\111}\ 
\nnza{\1\0\0\,\111}{\0\1\1\,\011}\ 
\nnzb{\1\0\1\,\011}{\0\1\0\,\111}\ 
\nnzb{\1\1\0\,\011}{\0\0\1\,\111}\ 
\nnza{\1\1\1\,\111}{\0\0\0\,\011}\ 
\\\mbox{}\
\nnza{11\0\,\0\0\0}{11\1\,\1\1\1}\ 
\nnzb{11\1\,\0\0\1}{11\0\,\1\1\0}\ 
\nnza{11\1\,\0\1\0}{11\0\,\1\0\1}\ 
\nnzb{11\0\,\0\1\1}{11\1\,\1\0\0}\ 
\nnza{11\1\,\1\0\0}{11\0\,\0\1\1}\ 
\nnzb{11\0\,\1\0\1}{11\1\,\0\1\0}\ 
\nnza{11\0\,\1\1\0}{11\1\,\0\0\1}\ 
\nnzb{11\1\,\1\1\1}{11\0\,\0\0\0}\ 
\\\mbox{}\
\nnza{1\01\,\0\0\0}{1\11\,\1\1\1}\ 
\nnza{1\11\,\0\0\1}{1\01\,\1\1\0}\ 
\nnzb{1\11\,\0\1\0}{1\01\,\1\0\1}\ 
\nnzb{1\01\,\0\1\1}{1\11\,\1\0\0}\ 
\nnza{1\11\,\1\0\0}{1\01\,\0\1\1}\ 
\nnza{1\01\,\1\0\1}{1\11\,\0\1\0}\ 
\nnzb{1\01\,\1\1\0}{1\11\,\0\0\1}\ 
\nnzb{1\11\,\1\1\1}{1\01\,\0\0\0}\ 
\\\mbox{}\
\nnza{\011\,\0\0\0}{\111\,\1\1\1}\ 
\nnza{\111\,\0\0\1}{\011\,\1\1\0}\ 
\nnza{\111\,\0\1\0}{\011\,\1\0\1}\ 
\nnza{\011\,\0\1\1}{\111\,\1\0\0}\ 
\nnzb{\111\,\1\0\0}{\011\,\0\1\1}\ 
\nnzb{\011\,\1\0\1}{\111\,\0\1\0}\ 
\nnzb{\011\,\1\1\0}{\111\,\0\0\1}\ 
\nnzb{\111\,\1\1\1}{\011\,\0\0\0}.
}
\\
In each of the last six groups, all the signs can be inversed.
Additionally, in each of the last six groups, one can apply the coordinate permutation
$(4\ 10)(5\ 11)(6\ 12)$ to all $8$ nonzeros. The last transformation, applied to one group, switches between the two equivalence classes of the equitable partitions.






\section
[{[[0,12],[4,8]]} and related structures: classification]
{$[[0,12],[4,8]]$ and related structures: classification}
\label{s:01248}

The equitable partitions of the $12$-cube 
with quotient matrix $[[0,12],[4,8]]$ (or, equivalently, the orthogonal arrays OA$(1024,12,2,7)$, as was mentioned in the introduction) can be classified utilizing rather straightforward approach, a local exhaustive search, using the exact-covering software.
Let $S$ be the quotient matrix $[[0,12],[4,8]]$. We say that the pair of disjoint sets 
$P_0$, $P_1$ of vertices is an $r$-local (equitable) partition if $P_0 \cup P_1$ are the all words of weight at most $r$ and the neighborhood of every vertex of weight less than $r$ satisfy the local condition from the definition of the equitable partition.

So, there are exactly two $0$-local partitions, 
$(\{\overline 0\},\emptyset)$ and
$(\emptyset,\{\overline 0\})$.
For each of them, there is only one 
$1$-local partition, up to isomorphism. 

\begin{proposition}\label{p:2loc}
Up to isomorphism, there are exactly 
$94$ two-local partitions 
$(P_0,P_1)$ with $\overline 0\in P_0$,
and exactly $6$, with $\overline 0\in P_1$.
\end{proposition}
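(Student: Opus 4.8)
The plan is to reduce the classification of $2$-local partitions to the classification of small regular graphs. A $2$-local partition $(P_0,P_1)$ has $P_0\cup P_1$ equal to the set of all words of weight at most $2$ (that is, $1+12+\binom{12}{2}=79$ vertices), and the only requirements imposed by the definition are the local equitable conditions at the vertices of weight $0$ and $1$. Since the quotient matrix is $[[0,12],[4,8]]$, a vertex in cell~$0$ must have all $12$ of its neighbours in cell~$1$, while a vertex in cell~$1$ must have exactly $4$ neighbours in cell~$0$. One preliminary observation: any automorphism of $Q_{12}$ taking one $2$-local partition onto another must preserve their common support, hence fix its centre $\overline 0$ (in the subgraph induced on that support, $\overline 0$ is the unique vertex all of whose neighbours have degree $12$), and therefore it is a coordinate permutation. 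So ``up to isomorphism'' means up to the action of $S_{12}$ on the $12$ coordinates, and the two cases $\overline 0\in P_0$ and $\overline 0\in P_1$ are genuinely separate because the quotient matrix is not symmetric.

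Consider first $\overline 0\in P_0$. The condition at $\overline 0$ forces all $12$ weight-$1$ words into $P_1$. For $\bar e_i\in P_1$, among its neighbours $\overline 0$ and the $11$ weight-$2$ words through coordinate $i$, exactly $4$ lie in $P_0$; since $\overline 0$ is one of these, exactly $3$ of the weight-$2$ words through $i$ lie in $P_0$. Hence the weight-$2$ words in $P_0$, viewed as $2$-subsets of $\{1,\dots,12\}$, form precisely a cubic simple graph on $12$ vertices, and conversely any cubic graph yields a valid $2$-local partition; the correspondence is a bijection, equivariant for $S_{12}$. So the count is the number of isomorphism classes of cubic graphs on $12$ vertices, which is $94$: the connected ones number $85$ (a classical enumeration), and the disconnected ones are $K_4$ together with a connected cubic graph on $8$ vertices ($5$ of them), an unordered pair of connected cubic graphs on $6$ vertices ($3$, since there are two such graphs), and three copies of $K_4$ ($1$), giving $85+5+3+1=94$.

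Now $\overline 0\in P_1$. The condition at $\overline 0$ puts exactly $4$ weight-$1$ words into $P_0$; let $A$ be the corresponding $4$-set of coordinates and $B$ its complement, $|B|=8$. If $\bar e_i\in P_0$ (so $i\in A$) then all $11$ weight-$2$ words through $i$ lie in $P_1$, whence every weight-$2$ word meeting $A$ is in $P_1$. If $\bar e_i\in P_1$ with $i\in B$, then exactly $4$ of the $11$ weight-$2$ words through $i$ are in $P_0$; the $4$ of them having their other coordinate in $A$ are already forced into $P_1$, so exactly $4$ of the $7$ weight-$2$ words through $i$ with both coordinates in $B$ lie in $P_0$. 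Thus the weight-$2$ words in $P_0$ form a $4$-regular simple graph on $B$, and any such graph works. As $S_{12}$ is transitive on $4$-subsets and the stabilizer of $A$ acts on this structure only through its action on $B$, the count equals the number of isomorphism classes of $4$-regular graphs on $8$ vertices; complementation inside $K_8$ turns these into cubic graphs on $8$ vertices, of which there are $6$.

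The conceptual content of the argument is routine, so there is no deep obstacle; the one genuine task is the enumeration of the relevant regular graphs (cubic on $12$, and $4$-regular on $8$ vertices), a small and well-documented computation appearing in the classical tables of cubic graphs. I would also reproduce and cross-check both numbers directly by feeding the $79$-vertex instance to the exact-cover solver of Section~\ref{s:tools} and classifying the outputs with \texttt{nauty} as done elsewhere in the paper; the double-counting test of Section~\ref{s:tools} then independently confirms the orbit numbers $94$ and $6$.
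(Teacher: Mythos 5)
Your proof is correct and takes essentially the same route as the paper: in the case $\overline 0\in P_0$ the weight-$2$ words of $P_0$ are identified with the edges of a cubic graph on the $12$ coordinates (giving $85+5+3+1=94$ classes), and in the case $\overline 0\in P_1$ they form a $4$-regular graph on the $8$ coordinates of $B$, whose complement is cubic ($6$ classes). The only nitpick is your side remark identifying $\overline 0$ inside the induced ball: the weight-$2$ vertices also have all their neighbours of degree $12$, so you should instead say $\overline 0$ is the unique vertex of degree $12$ with that property (or simply that an automorphism preserving a ball of radius $2<n/2$ fixes its centre); this does not affect the main argument.
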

\begin{proof}
Let $\overline 0\in P_0$. In this case,
all weight-$1$ words are in $P_1$.
Consider the graph $\Gamma$ on the $12$ weight-$1$ words, where two vertices are adjacent
if in the $12$-cube they are adjacent 
to a common weight-$2$ word from $P_0$.
So, the weight-$2$ words from $P_0$ are
in one-to-one correspondence with 
the edges of $\Gamma$ (indeed, a weight-$2$ word has exactly $2$ weight-$1$ neighbors).
Next, we see that $\Gamma$ is a cubic graph
(indeed, every weight-$1$ word 
is in $P_1$ and hence has exactly $4$ neighbors 
from $P_0$; one of them is $\overline{0}$, 
the other $3$ correspond to edges of $\Gamma$).
The number of unlabelled connected cubic graphs
on $4$, $6$, $8$, and $12$ vertices
is $1$, $2$, $5$, $85$, respectively, see \url{http://oeis.org/A002851}.
So, the number of connected 
and disconnected cubic graphs on $12$ vertices is $85+5+3+1=94$. 

Let $\overline 0\in P_1$. Without loss of generality, all weight-$1$ words with $1$ in the first $8$ coordinates are assumed to be in $P_1$, the other $4$ in $P_0$. The last four words have no neighbors in $P_0$; so, any weight-$2$ word in $P_0$ has two weight-$1$ neighbors in $P_1$ and can be considered as an edge of some graph $\Gamma'$ on $8$ vertices (weight-$1$ words of $P_1$). From the quotient matrix, we see that  $\Gamma'$ is regular of degree $4$; so, its complement is cubic. There are $1$ disconnected and $5$ connected cubic graphs of order $8$.
\end{proof}

The search of the $3$-local partitions
was done by solving instances of the exact covering problem. We fix some $2$-local partition 
$(P_0,P_1)$ 
and consider the
weight-$2$ words in $P_1$ as the ``points''. 
To each ``point'' $\bar x$, 
we assign the multiplicity $\mu = 4-\lambda$, 
where $\lambda$ is the number of its weight-$1$ 
neighbors from $P_0$.
To each weight-$3$ word $\bar y$ that 
has no neighbors from $P_0$,
we assign a ``set'' $s(\bar y)$ of $3$ ``points'',
namely the $3$ weight-$2$ neighbors of $\bar y$.
With the chosen ``points'', their multiplicities,
and the ``sets'', we have an instance $\mathrm{Cov}(P_0,P_1)$ of the exact-covering problem.
Straightforwardly from  the definitions, we have the following one-to-one correspondence.

\begin{proposition}\label{p:loc-to-cov}
Given a $2$-local partition $(P_0,P_1)$,
the $3$-local partitions $(R_0,R_1)$ such that 
$P_0\subset R_0$ and $P_1\subset R_1$ 
are in one-to-one correspondence with the solutions 
$S$ of $\mathrm{Cov}(P_0,P_1)$. 
Namely, 
$S=\{s(\bar y)\mid \bar y \in R_0\backslash P_0\}.$
\end{proposition}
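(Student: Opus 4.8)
The plan is to show that a $3$-local partition extending $(P_0,P_1)$ amounts to nothing more than a choice of which weight-$3$ words are put into $R_0$, and that the equitable conditions at the weight-$2$ words translate verbatim into the constraints of the exact-covering instance $\mathrm{Cov}(P_0,P_1)$. Throughout one keeps in mind that $(P_0,P_1)$ already colors every word of weight at most $2$, so the only freedom in passing to a $3$-local partition lies in distributing the weight-$3$ words between $R_0\setminus P_0$ and $R_1\setminus P_1$.

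The first step is to record an elementary consequence of $2$-locality: \emph{every weight-$2$ word lying in $P_0$ has both of its weight-$1$ down-neighbors in $P_1$}. Indeed, a weight-$1$ word $\bar e_i\in P_0$ must have no neighbor in cell $0$, so all of its weight-$2$ up-neighbors lie in $P_1$. With this in hand, the equitable condition at a weight-$2$ word $\bar x$, imposed when one passes to a $3$-local extension, unwinds as follows. If $\bar x\in P_0$, the requirement ``$0$ neighbors in cell $0$'' is equivalent to ``all $10$ weight-$3$ up-neighbors of $\bar x$ lie in $R_1$''. If $\bar x\in P_1$, writing $\lambda(\bar x)$ for the number of weight-$1$ down-neighbors of $\bar x$ in $P_0$ (so that $\mu(\bar x)=4-\lambda(\bar x)$ is its multiplicity as a point of $\mathrm{Cov}(P_0,P_1)$), the requirement ``$4$ neighbors in cell $0$'' is equivalent to ``exactly $\mu(\bar x)$ of the weight-$3$ up-neighbors of $\bar x$ lie in $R_0$''.

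The second step uses the first of these equivalences to pin down which weight-$3$ words may enter $R_0$: if $\bar y\in R_0\setminus P_0$ had a weight-$2$ neighbor $\bar x\in P_0$, then $\bar x$ would have a cell-$0$ neighbor, a contradiction; hence every $\bar y\in R_0\setminus P_0$ has all three of its weight-$2$ neighbors in $P_1$, so that $s(\bar y)$ is a genuine set of the instance. Therefore $S:=\{s(\bar y):\bar y\in R_0\setminus P_0\}$ is a subcollection of the sets of $\mathrm{Cov}(P_0,P_1)$, and for a point $\bar x$ (a weight-$2$ word of $P_1$) the sets of $S$ containing $\bar x$ are in bijection with the weight-$3$ up-neighbors of $\bar x$ lying in $R_0$; by the second equivalence these number exactly $\mu(\bar x)$, so $S$ solves the exact-covering problem. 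Conversely, from a solution $S$ one puts $R_0:=P_0\cup\{\bar y:s(\bar y)\in S\}$ and lets $R_1$ be the remaining words of weight at most $3$; reading the two equivalences in the reverse direction --- the cell-$0$ neighbor count of a point $\bar x\in P_1$ coming out as $\lambda(\bar x)+\mu(\bar x)=4$, while a weight-$2$ word of $P_0$ gets no cell-$0$ neighbor because every member of $R_0\setminus P_0$ has only $P_1$-neighbors --- verifies that $(R_0,R_1)$ is a valid $3$-local partition extending $(P_0,P_1)$. Finally, since distinct weight-$3$ words have distinct triples of weight-$2$ neighbors, the map $\bar y\mapsto s(\bar y)$ is injective, so the two assignments $(R_0,R_1)\mapsto S$ and $S\mapsto(R_0,R_1)$ are mutually inverse, which is the asserted one-to-one correspondence.

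The argument is essentially a translation dictionary, and the one spot calling for a little care --- the ``main obstacle'', modest as it is --- is making sure that the weight-$1$ neighbors of a weight-$2$ word never contribute a hidden cell-$0$ incidence; this is exactly what the $2$-locality observation of the second paragraph rules out, both for $\bar x\in P_0$ (where both weight-$1$ neighbors lie in $P_1$) and, through the bookkeeping $\mu(\bar x)=4-\lambda(\bar x)$, for $\bar x\in P_1$. Once that is in place, every remaining step is a direct reading of the definitions.
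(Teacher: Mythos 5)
Your proof is correct and takes the only natural route---unwinding the definitions of $r$-local partitions and of the instance $\mathrm{Cov}(P_0,P_1)$; the paper itself offers no argument, asserting only that the correspondence follows ``straightforwardly from the definitions.'' Your explicit bookkeeping (that a weight-$2$ word of $P_0$ has both weight-$1$ down-neighbors in $P_1$, and that the up-neighbor count at a point $\bar x\in P_1$ must equal $\mu(\bar x)=4-\lambda(\bar x)$) is exactly the content the paper leaves implicit, so the two agree in substance.
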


In such a way, for each of $94+6$ non-isomorphic 
$2$-local partitions, 
using \texttt{libexact}, we found all $3$-local continuations. After the isomorph rejection, we found all
non-isomorphic $3$-local partitions. 
The same approach allows to proceed the next step in finding the $4$-local partitions.
The results are checked using the double-counting approach (see Section~\ref{s:tools}). 

\begin{proposition}[computational results]\label{p:34loc}
The number of non-isomorphic $3$-local partitions
$(P_0,P_1)$ with $\overline 0\in P_0$ and 
$\overline 0\in P_1$ is $34$ and $222$, respectively.
For $4$-local partitions,
the number is $37$ and $81$, respectively.
\end{proposition}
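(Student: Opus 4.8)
Since Proposition~\ref{p:34loc} is a purely computational statement, the plan is an exhaustive shell-by-shell extension, combining an exact-cover solver with isomorph rejection exactly in the style described in Section~\ref{s:tools}. Concretely, I would maintain, for each weight level $r$, a list of representatives of the equivalence classes of $r$-local partitions; for each representative I would enumerate all of its extensions to weight $r+1$, pool all extensions over all representatives, and then canonicalise, keeping one representative per class. Throughout, the group acting is the coordinate-permutation group $S_{12}$: cube translations cannot be used here, because they do not preserve the ball of radius $r$ around $\overline 0$, which is the support of an $r$-local partition. The two ``branches'' $\overline 0\in P_0$ and $\overline 0\in P_1$ are handled independently, since no coordinate permutation fixing $\overline 0$ mixes them (and the matrix $[[0,12],[4,8]]$ is not symmetric, so the cells cannot be swapped either). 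The base of the recursion is Proposition~\ref{p:2loc}: $94$ and $6$ classes of $2$-local partitions.

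\emph{Step $2\to 3$.} For each of the $94$ (resp.\ $6$) representatives I would form the instance $\mathrm{Cov}(P_0,P_1)$ and apply Proposition~\ref{p:loc-to-cov}: the ``points'' are the weight-$2$ words of $P_1$, each with multiplicity $4$ minus the number of its weight-$1$ neighbours already in $P_0$, and the ``sets'' are the triples $s(\bar y)$ indexed by those weight-$3$ words $\bar y$ all three of whose weight-$2$ neighbours lie in $P_1$; each solution yields exactly one $3$-local continuation, with $R_0\setminus P_0$ the chosen $\bar y$'s. These instances are small (at most $12$ points, each of small multiplicity) and solved essentially instantly with \texttt{libexact}. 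I would then encode each resulting $3$-local partition as a vertex-coloured graph so that equivalence becomes graph isomorphism, canonicalise with \texttt{nauty}, and count the surviving representatives; this is what is claimed to give $34$ and $222$.

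\emph{Step $3\to 4$.} The same machinery applies, with the covering instance moved one shell out. Now the ``points'' are the weight-$3$ words that the $3$-local partition placed in $R_1$, each with multiplicity $4$ minus the number of its weight-$2$ neighbours already forced into $R_0$; the admissible ``sets'' are indexed by the weight-$4$ words all four of whose weight-$3$ neighbours lie in $R_1$, each contributing the $4$-element set of its weight-$3$ neighbours. A solution is exactly the collection of weight-$4$ words forming $S_0\setminus R_0$, and again, straight from the definitions, solutions biject with $4$-local continuations (no condition is imposed on weight-$4$ words themselves at this level, only on the weight-$\le 3$ vertices, which is precisely the covering constraint). Solving, canonicalising and counting as before is claimed to yield $37$ and $81$.

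\emph{Validation and the main obstacle.} I would check every level by the double-counting argument of Section~\ref{s:tools}: compute $|\mathrm{Aut}|$ of each representative with \texttt{nauty} and verify that $\sum_{\text{classes}} |S_{12}|/|\mathrm{Aut}|$ equals the total number of labelled $r$-local partitions actually produced by the search, so that a discrepancy would expose an error. The hard part is not any single idea but keeping the search both feasible and provably complete: the $222$ three-local partitions with $\overline 0\in P_1$ in particular fan out into a large pool of weight-$4$ candidates, so the exact-cover instances and the isomorph-rejection pool must stay manageable, and — the subtler point — one must get the multiplicities and admissibility conditions for the weight-$4$ instance exactly right, since there the equitable-partition constraints couple the weight-$2$, weight-$3$ and weight-$4$ shells and an off-by-one in a multiplicity would silently corrupt the count. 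Passing the double-counting check at both the $3$-local and $4$-local stages is what gives confidence that this has been carried out correctly.
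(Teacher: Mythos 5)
Your proposal is correct and follows essentially the same route as the paper: shell-by-shell extension from the $94+6$ classes of $2$-local partitions via the exact-cover instances $\mathrm{Cov}(P_0,P_1)$ of Proposition~\ref{p:loc-to-cov} (with the multiplicities $4-\lambda$ and admissibility conditions you state), isomorph rejection by canonical graph labelling, and validation by the orbit--stabilizer double count. The only content of the paper's argument beyond what you wrote is the explicit statement that the same covering construction is repeated one shell further out, which you have reproduced accurately.
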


The remaining part of the classification is based 
on the fact that the sum of the values of the 
$\{12,-4\}$-valued eigenfunction corresponding to 
a putative equitable partition 
(with considered parameters) over any $5$-face is zero.
Using this condition, one can uniquely reconstruct 
an eigenfunction by its values on the words of weight at most $4$ (actually, it is sufficient to know the values on the weight-$4$ words, see \cite[Theorem~3]{Vas2012en}).
It occurs that every $4$-local partition continues to a complete equitable partition (we have no theoretical proof of this fact). 

\begin{theorem}[computational results]\label{th:0-12-4-8}
There are exactly $16$ equivalence classes of equitable partitions $(P_0,P_1)$ of the $12$-cube with quotient matrix $[[0,12],[4,8]]$. In one of them, $P_0$ is a linear (or affine) subspace of the $12$-cube; two are ``full-rank'', i.e., the affine span of $P_0$ is the whole  $12$-cube; the other $13$ are ``semilinear'', that is, the affine span of $P_0$ consists of a half of the vertices of the $12$-cube. See the appendix for the list of representatives.
\end{theorem}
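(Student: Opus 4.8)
The plan is to carry out an exhaustive computer search organized as an incremental build-up of partial solutions, followed by isomorph rejection and a reconstruction-and-verification phase. First I would work with the notion of an $r$-local partition from the text: a pair $(P_0,P_1)$ partitioning the words of weight at most $r$ so that the neighbourhood condition imposed by the quotient matrix $[[0,12],[4,8]]$ holds at every vertex of weight strictly less than $r$. There are two $0$-local partitions, and each has a unique $1$-local extension; the $2$-, $3$-, and $4$-local levels are enumerated in Propositions~\ref{p:2loc} and~\ref{p:34loc}. The engine for passing from level $r$ to level $r+1$ is Proposition~\ref{p:loc-to-cov}: once an $r$-local partition is fixed, its $(r+1)$-local continuations are precisely the solutions of the exact-covering instance $\mathrm{Cov}(P_0,P_1)$, which I would enumerate with \texttt{libexact}. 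After each round I would apply isomorph rejection (encoding each partial partition as a graph and canonizing it with \texttt{nauty}, as in Section~\ref{s:tools}) and keep one representative per equivalence class.

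The second phase exploits the spectral rigidity of the target partitions. By Lemma~\ref{FourierSystem} the $\{12,-4\}$-valued associated function $f$ is a $\lambda_8(12)$-eigenfunction, so all of its nonzero Fourier coefficients have weight $8=12-4$; hence Proposition~\ref{p:sarkar}(i) shows that the sum of $f$ over every $5$-face vanishes. This yields a recurrence expressing the value of $f$ at any word of weight $k\ge 5$ through its values on words of smaller weight, so a candidate eigenfunction is determined uniquely by its restriction to the words of weight at most $4$ (indeed by the weight-$4$ layer alone, see \cite[Theorem~3]{Vas2012en}). For each of the $37+81$ non-isomorphic $4$-local partitions I would run this reconstruction and then \emph{verify} directly that the resulting function takes only the values $12$ and $-4$ and that the induced $2$-partition is genuinely equitable with the prescribed quotient matrix. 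The (a~priori unexplained) fact that every $4$-local partition completes simply falls out of this check; a concluding isomorph rejection on the completed partitions produces the count~$16$.

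What remains is bookkeeping. For each of the $16$ representatives I would compute the affine span of the smaller cell $P_0$ over $\mathrm{GF}(2)$: this singles out the one solution in which $P_0$ is an affine subspace, the two ``full-rank'' solutions whose span is all of $Q_{12}$, and the thirteen ``semilinear'' ones whose span has dimension $11$. To guard against search errors I would then apply the double-counting validation of Section~\ref{s:tools}: compute, from the automorphism-group orders of the $16$ representatives, the total number of labelled partitions, and check it against the raw count returned by the unreduced search at every level.

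The principal obstacle here is scale and correctness rather than mathematical depth: the exact-cover instances at the $3$- and $4$-local levels, and above all the canonical-form isomorph rejection over the many thousands of partial objects, are where both running time and the danger of a subtle bug concentrate, which is exactly why the independent double-counting check is indispensable. A secondary point needing care is justifying that the reconstruction from the weight-$4$ layer is well defined and self-consistent --- this is where the cited result \cite{Vas2012en} does the work --- together with confirming computationally the curious fact that no $4$-local partition fails to extend.
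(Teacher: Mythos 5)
Your proposal matches the paper's method essentially step for step: the same $r$-local partition hierarchy driven by exact covering and \texttt{nauty}-based isomorph rejection through the $4$-local level, the same reconstruction of the $\{12,-4\}$-valued eigenfunction from the weight-$\le 4$ layer via the vanishing of sums over $5$-faces (with the same appeal to \cite{Vas2012en}), the same computational verification that every $4$-local partition completes, and the same double-counting validation. This is the paper's own proof strategy, correctly reproduced.
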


\begin{remark}
For the classification, 
it is sufficient to consider only the local partitions 
that meet $\overline 0\in P_0$, or only the local partitions that meet $\overline 0\in P_1$.
However, as the both ways were successful, 
we described in Propositions \ref{p:2loc} 
and \ref{p:34loc} the intermediate results 
for each of them. 
\end{remark}
\begin{remark}
The local search algorithm described in this section 
can be applied for finding equitable partitions with
different parameters (in different graphs).
However, we failed in the classification of the equitable
partition of the $12$-cube with quotient matrices 
$[[2,10],[6,6]]$ and $[[3,9],[7,5]]$ using the same approach. The corresponding instances of the exact-covering problem occur to be too large to solve with known tools. 
\end{remark}

The equitable partitions considered in the current section are related 
with several classes of combinatorial configurations.
The following lemma summarizes several known results
about such relations.
\begin{lemma} 
The objects from the following classes 
are in one-to-one correspondence:
\begin{itemize}
\item[\rm (I)]
the equitable partitions 
of the $n$-cube with quotient matrix 
$\left(
\begin{array}{cc}
 0 & n \\ c & n-c
\end{array}
\right),
$ $c<n$\textup;
\item[\rm (II)]
the orthogonal arrays OA$(N,n,2,t)$, 
where $t=\displaystyle\frac{n+c}{2}-1$ 
and $\displaystyle N=2^n\Big(1-\frac{n}{2(t+1)}\Big)$ 
\textup(so, the parameters attain the bound
\eqref{eq:bier}\textup{);}
\item[\rm (III)]
the orthogonal arrays OA$(N/2,n-1,2,t-1)$\textup;
\item[{\rm (IV)\makebox[0mm][l]{, (V)}}]
\mbox{}\phantom{, (V)}
the equitable partitions of the $(n-1)$-cube with quotient matrices \\
$
\left(
\begin{array}{ccc}
 0   & c-1 & n-c    \\  
 c-1 & 0   & n-c    \\ 
 c   & c   & n-2c-1 
\end{array}
\right)
$ and 
$
\left(
\begin{array}{ccc}
 c-1 & n-c    & 0   \\  
 c   & n-2c-1 & c   \\ 
 0   & n-c    & c-1  
\end{array}
\right),
$ respectively\textup;
\item[\rm (VI)]
the completely regular codes in $Q_{n-1}$ with the intersection array $(n-c,c; c,n-c)$.
\end{itemize}
\end{lemma}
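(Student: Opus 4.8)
This statement is a compendium of known correspondences, so the plan is to build a web of explicit bijections among the six classes, each link verified by elementary neighbour counting, with Lemma~\ref{FourierSystem} and the bound~\eqref{eq:bier} (together with its equality case) as the only analytic input. I would centre everything on class~(I) and move outward: (I)$\leftrightarrow$(II), (II)$\leftrightarrow$(III), (III)$\leftrightarrow$(IV), (V)$\leftrightarrow$(VI), and one further link joining these two groups; transitivity then yields all pairwise correspondences, and one checks along the way that graph automorphisms of the cubes and permutations of orthogonal-array coordinates are carried to one another, so the equivalence classes match up as well.

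For (I)$\leftrightarrow$(II): given an equitable partition $(C_0,C_1)$ of $Q_n$ with quotient matrix $[[0,n],[c,n-c]]$, Lemma~\ref{FourierSystem} shows that the associated function $f$ is a $(-c)$-eigenfunction of $Q_n$, so $\widehat f$ is supported in weights $0$ and $\frac{n+c}{2}$; hence the indicator $g=(f+c)/(n+c)$ of $C_0$ has $\widehat g$ vanishing in weights $1,\dots,\frac{n+c}{2}-1$, i.e.\ $g$ is correlation immune of order exactly $t=\frac{n+c}{2}-1$, with $|C_0|=\frac{c}{n+c}2^n=N$ ones; this $N$ equals the right-hand side of \eqref{eq:bier} for $q=2$, so $C_0$ read as a set of rows is a simple OA$(N,n,2,t)$ meeting the bound. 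Conversely, a simple OA$(N,n,2,t)$ meeting the bound has an indicator that is correlation immune of order $t$ with $N<2^{n-1}$ ones, and the structure theory of \cite{FDF:CorrImmBound} gives that it and its complement form an equitable $2$-partition whose Fourier data (nonzero coefficients only in weights $0$ and $\frac{n+c}{2}$) forces the matrix $[[0,n],[c,n-c]]$.

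For (II)$\leftrightarrow$(III)$\leftrightarrow$(IV): shortening $C_0$ on coordinate $n$ at the value $0$ (correlation immunity of order $\ge1$ makes coordinate $n$ balanced on $C_0$, so exactly $N/2$ rows remain, and deleting the coordinate lowers the strength by one) yields an OA$(N/2,n-1,2,t-1)$, namely the slice $D_0=\{\bar x\in\VQ_{n-1}:(\bar x,0)\in C_0\}$. Setting $D_1=\{\bar x:(\bar x,1)\in C_0\}$ and $E=\VQ_{n-1}\setminus(D_0\cup D_1)$ (the $D_i$ are disjoint because $C_0$ is independent), a count of the neighbours in $Q_n$ of $(\bar x,0)$ and $(\bar x,1)$ shows that $(D_0,D_1,E)$ is equitable in $Q_{n-1}$ with matrix~(IV), and, crucially, that $D_1$ is recovered from $D_0$ alone by the rule ``$\bar x\in D_1$ iff $\bar x\notin D_0$ and $|N_{Q_{n-1}}(\bar x)\cap D_0|=c-1$'' (every $\bar x\notin D_0$ satisfies $|N(\bar x)\cap D_0|\in\{c-1,c\}$, since $(\bar x,0)\in C_1$ has exactly $c$ neighbours in $C_0$, at most one of them being $(\bar x,1)$). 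So (III) and (IV) carry the same data, and re-gluing $C_0=(D_0\times\{0\})\cup(D_1\times\{1\})$ and re-verifying the quotient matrix $[[0,n],[c,n-c]]$ closes the loop with (I)/(II). The point that needs care --- and that I expect to be the main obstacle --- is the reversibility of shortening: one must show that an OA$(N/2,n-1,2,t-1)$ arising this way always satisfies the neighbour-count dichotomy above (equivalently, that it is an independent set of the correct ``local type''), so that it extends back, and uniquely. I would obtain this either from the Fourier support of the indicator of $D_0$ or from the reconstruction principle used elsewhere in the paper --- the relevant eigenfunction sums to $0$ over every $\frac{n+c}{2}$-face, cf.\ Proposition~\ref{p:sarkar}(i) and \cite{Vas2012en} --- which pins down the $Q_n$-partition from its $Q_{n-1}$-trace.

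Finally, (V)$\leftrightarrow$(VI) is essentially a definition: a code is completely regular exactly when its distance partition is equitable, and the intersection array $(n-c,c;c,n-c)$ translates termwise into the quotient matrix~(V) via $a_i=(n-1)-b_i-c_i$; conversely a matrix-(V) equitable partition is ``tridiagonal'' with $c>0$, so its cells coincide with the distance classes of the first cell, which is therefore completely regular with exactly that array. The link of (V)/(VI) to (I)--(IV) is then the classical Delsarte-type relation between completely regular codes in the Hamming scheme and orthogonal arrays (here realized one dimension down), which I would verify directly in the present concrete setting by matching the distance distribution of the code against the strength and size conditions on the array. Assembling all the links gives the six-fold correspondence.
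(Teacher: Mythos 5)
Your forward constructions are sound, and your neighbour-count rule recovering $D_1$ from $D_0$ is a correct observation, but two of your links have genuine gaps. First, the surjectivity of shortening, which you yourself flag as the main obstacle: neither of your proposed fixes works, because the shortened array OA$(N/2,n-1,2,t-1)$ does \emph{not} attain bound \eqref{eq:bier} for its own parameters (one checks $\tfrac{c}{n+c}>\tfrac{c-1}{n+c-2}$ whenever $c<n$), so the equality-case/Fourier-support machinery that pins down $C_0$ in $Q_n$ says nothing about an arbitrary $D_0$ in $Q_{n-1}$, and the reconstruction principle of \cite{Vas2012en} reconstructs an eigenfunction of $Q_n$ from low-weight values, not from a trace on a facet. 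The missing ingredient is the Seiden--Zemach correspondence \cite[Proposition~2.3]{SeiZem:OA:66}: for odd $t$ the map $D\mapsto (D\times\{0\})\cup((D+\overline 1)\times\{1\})$ is a bijection from OA$(N/2,n-1,2,t-1)$ onto the \emph{self-complementary} OA$(N,n,2,t)$, and every type-(II) array is self-complementary by distance invariance of equitable partitions. (Note $t$ is automatically odd here, since condition (b) of Section~\ref{s:def} forces $4\mid n+c$; also this route shows $D_1=D_0+\overline 1$, which is much more explicit than your local rule.)

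Second, you do not actually establish (IV)$\leftrightarrow$(V). These are two genuinely different $3$-partitions of $Q_{n-1}$, not a relabelling of cells: in (IV) the first cell is an independent set, while in (V) it induces a $(c-1)$-regular subgraph, so no permutation of cells carries one quotient matrix to the other, and a vague appeal to ``the classical Delsarte-type relation'' does not produce the bijection. This link (together with (III)$\leftrightarrow$(IV)) is precisely the nontrivial content of the cited paper \cite{Kro:OA13}, and the present paper does not reprove it either --- its ``proof'' of the lemma is an assembly of citations (Potapov for (I)$\leftrightarrow$(II), Seiden--Zemach plus self-complementarity for (II)$\leftrightarrow$(III), \cite{Kro:OA13} for (III),(IV),(VI)), with only (V)$\leftrightarrow$(VI) argued from definitions as you do. A smaller point: for the converse of (I)$\leftrightarrow$(II) you invoke the structure theory of \cite{FDF:CorrImmBound}, but that reference treats only $t=2n/3-1$ (i.e.\ $c=n/3$); the statement you need for general $c$ --- that equality in \eqref{eq:bier} forces an equitable $2$-partition with first quotient-matrix entry $0$ --- is the result of \cite{Potapov:2010,Pot:2012:color}.
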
 

By the definition, a \emph{completely regular code} 
with the intersection array 
$(c_1,\ldots,c_r;$
$b_0,\ldots,b_{r-1})$ is a set of vertices
such that the distance partition with respect to it
is equitable with tridiagonal quotient matrix,
$(c_1,\ldots,c_r)$ and $(b_0,\ldots,b_{r-1})$ 
being the subdiagonal and the superdiagonal;
so, the correspondence between (V) and (VI) is straightforward. 
The connection between (I) and (II) is noted in \cite{Potapov:2010,Pot:2012:color}.
(III), (IV), and (VI) are related in \cite{Kro:OA13}.
It is known \cite[Proposition~2.3]{SeiZem:OA:66}
that for odd $t$, the arrays OA$(N/2,n-1,2,t-1)$ 
are in one-to-one correspondence with the self-complementary arrays
OA$(N,n,2,t)$ (a set $C$ of vertices of $Q_n$ is \emph{self-complementary}
if $C=C+\overline 1$);
on the other hand, 
the array of type~(II) 
must be self-complementary because
of the distance invariance 
of equitable partitions, 
see e.g.~\cite{Kro:struct}.


Once, for $n=12$ and $c=4$ 
we have representatives 
of all $16$ equivalence classes of partitions of type (I), 
it is rather straightforward
to find the number of equivalence classes 
of objects of types (II)--(VI). 
\begin{theorem}
 There are exactly $16$ inequivalent orthogonal arrays OA$(1024,12,2,7)$.
There are exactly $37$ inequivalent 
objects from each of the following 
families:
orthogonal arrays OA$(512,11,2,6)$;
completely regular codes in $Q_{11}$
with the intersection array
$(8,4;4,8)$;
equitable partitions of $Q_{11}$
with quotient matrices
$[[0,3,8],[3,0,8],[4,4,3]]$ and
$[[3,8,0],[4,3,4],[0,8,3]]$,
respectively.
\end{theorem}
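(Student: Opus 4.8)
The plan is to read everything off Theorem~\ref{th:0-12-4-8} together with the equivariance of the correspondences (I)--(VI) of the lemma stated just above. First I would note that the bijection between objects of type~(I) and type~(II) is literally the identity on sets of vertices (an equitable partition $(P_0,P_1)$ of $Q_{12}$ with quotient matrix $[[0,12],[4,8]]$ \emph{is} the orthogonal array $P_0$), so it commutes with the action of $\mathrm{Aut}(Q_{12})=S_{12}\ltimes \mathbb{Z}_2^{12}$ on both sides; hence equivalence classes match, automorphism groups are carried over verbatim, and Theorem~\ref{th:0-12-4-8} immediately gives exactly $16$ inequivalent OA$(1024,12,2,7)$. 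Similarly, the correspondences (III)$\leftrightarrow$(IV)$\leftrightarrow$(V)$\leftrightarrow$(VI) are realized by isomorphisms of the ambient structures (reordering cells, passing to distance classes), hence respect equivalence; so all four of the remaining families have the same number of equivalence classes, and it suffices to count OA$(512,11,2,6)$.

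The one point requiring care is the shortening step relating OA$(512,11,2,6)$ to self-complementary OA$(1024,12,2,7)$, via \cite[Proposition~2.3]{SeiZem:OA:66}. Every type-(II) array $C\subseteq \VQ_{12}$ satisfies $C=C+\overline1$ by distance-invariance of equitable partitions. Fixing a coordinate $i$ to the value $0$ yields an OA$(512,11,2,6)$; fixing it to $1$ yields the $\overline1$-translate of that array, hence an equivalent one. Conversely, an OA$(512,11,2,6)$ determines uniquely the self-complementary $12$-coordinate array it shortens from, so two shortenings at coordinates $i,j$ of a single $C$ are equivalent exactly when some automorphism of $C$ (equivalently of the associated partition) sends $i$ to $j$, and shortenings of inequivalent arrays stay inequivalent. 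Therefore the number of inequivalent OA$(512,11,2,6)$ equals $\sum_{k=1}^{16} o_k$, where $o_k$ is the number of orbits of the automorphism group of the $k$-th partition of $Q_{12}$ on the $12$ coordinate positions.

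Finally I would read off the orbit counts $o_k$ from the classification data recorded with the $16$ representatives in the appendix (the $k=1$ linear partition is coordinate-transitive, the others are not), obtaining $\sum_k o_k = 37$; this yields $37$ inequivalent objects in each of the families OA$(512,11,2,6)$, completely regular codes in $Q_{11}$ with intersection array $(8,4;4,8)$, and the two families of equitable $3$-partitions of $Q_{11}$ with the stated quotient matrices. As a consistency check I would apply the double-counting method of Section~\ref{s:tools}: compare $\sum_k |\mathrm{Aut}(Q_{12})|/|\mathrm{Aut}(P^{(k)})|$ for the $16$ type-(II) arrays against the independently obtained absolute counts of the shortened arrays. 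The genuine obstacle here is not any deep argument but the bookkeeping in the previous paragraph — confirming that inequivalent shortenings of the same array biject with coordinate orbits under its automorphism group; once that is pinned down, the numerical statement follows directly from the already-completed classification.
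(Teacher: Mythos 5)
Your proposal is correct and follows essentially the same route as the paper, which likewise derives the theorem from Theorem~\ref{th:0-12-4-8} via the correspondences (I)--(VI) and the Seiden--Zemach shortening/extension bijection; your orbit-counting formulation of the shortening step (number of inequivalent OA$(512,11,2,6)$ equals the sum over the $16$ classes of the number of coordinate orbits of the automorphism group) is just a cleaner restatement of the computation the paper actually performs, namely shortening each of the $16$ arrays at all $12$ coordinates and counting canonical forms. The only small slip is that the appendix records orbit sizes on the vertices of $P_0$ and $P_1$, not on the $12$ coordinate positions, so the orbit counts $o_k$ must still be computed from the listed representatives rather than read off directly.
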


\begin{remark}\label{rem:3to2}
Unifying the first two cells 
of a $3$-partition 
with quotient matrix
$[[0,3,8]$, $[3,0,8]$, $[4,4,3]]$,
we obtain an equitable 
$2$-partition with quotient matrix
$[[3,8],[8,3]]$. However, not all 
$2$-partitions with quotient matrix
$[[3,8],[8,3]]$ can be obtained in such a way.
\end{remark}
\section
[{[[2,10],[6,6]]}: discussion, connection with OA(1536,13,2,7)]
{$[[2,10],[6,6]]$: discussion, connection with OA$(1536,13,2,7)$}
\label{s:21066}

The remaining quotient matrix related 
with equitable partitions of $H(n,2)$
that attain the correlation-immunity bound 
and were not discussed in details above
is $[[2,10],[6,6]]$. The corresponding 
equitable partitions are of some special 
combinatorial interest 
because the first cell 
of such partition
induces a collection of disjoint cycles 
in the Hamming graph.
We failed to make the complete classification
of such partitions using
any approach described here directly.
However, the computational 
algorithm from Section~\ref{s:01248}
can be modified, dividing
the classification into more steps
depending on the value of some coordinate.
This way can be successful, 
but requires relatively 
large amount of computational
resources. 
Hopefully, 
the classification will be finished 
within several months,
and at this moment we can only 
announce that there are
more than $80$ equivalence classes 
of such partitions.
In this section, 
we briefly discuss the length of cycles 
induced by such a partition
and mention 
the relation with 
the quotient matrix $[[0,13],[3,10]]$, 
which corresponds to the orthogonal arrays
OA$(1536,13,2,7)$.

\subsection{Cycle lengths}\label{s:cycles}
As one can see from the first coefficient 
of the quotient matrix $[[2,10],[6,6]]$,
the first cell of a corresponding equitable
partition induces a regular subgraph 
of $Q_{12}$ of degree $2$, i.e.,
the union of disjoint cycles.
Any theoretical information 
on the structure of a partition 
could simplify
the classification; 
so, it is important to understand if the size 
of cycles is a constant or it can vary.
In the following proposition, 
we show that there
are partitions that induce 
both $4$-cycles 
and $8$-cycles. 
Another conclusion 
that can be made
from it is that 
the simple construction
\cite[Prop.~1(c)]{FDF:PerfCol}
that multiplies the quotient
matrix by an integer number 
can produce inequivalent
equitable partitions 
if one varies the addition,
treating the vertex set of the $n$-cube
as different $\mathbb Z_4$ modules.

\begin{proposition}\label{p:cycles}
For every $i$ from $\{0,1,2,3\}$,
 there is an equitable partition of $Q_{12}$ with quotient matrix $[[2,10],[6,6]]$
 such that the first cell induces $128\cdot (3-i)$ cycles of length $4$ and $64\cdot i$
  cycles of length $8$.
\end{proposition}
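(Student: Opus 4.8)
The plan is to build the four partitions explicitly from the $\mathbb Z_4$-module structure on the vertex set of $Q_{12}$, exploiting the observation the authors make just before the proposition: multiplying a quotient matrix by an integer (here, doubling the matrix $[[1,5],[3,3]]$ of Tarannikov's second family in $Q_6$) yields an equitable partition, but the resulting partition depends on how one identifies $\VQ_{12}$ with a sum of copies of a smaller cube, and in particular on which coordinates are "doubled" in the $\mathbb Z_4$ sense. So first I would recall the quotient matrix $[[1,5],[3,3]]$ in $Q_6$, which attains the correlation-immunity bound for $n=6$ and whose first cell $D_0$ induces a regular subgraph of degree $1$ in $Q_6$ (a perfect matching on $2^6/4\cdot 2 = 32$ edges, i.e.\ $16\cdot 2 = $ — more precisely $|D_0| = 2^6\cdot\frac{3}{8} = 24$, inducing $12$ disjoint edges). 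Doubling naively via $f_{12}(\bar u,\bar v) = f_6(\bar u+\bar v)$ (as in the Fon-Der-Flaass construction in Section~\ref{s:3975}) gives a partition of $Q_{12}$ with quotient matrix $[[2,10],[6,6]]$ in which each edge of the matching in $Q_6$ lifts, across the $2^6$ fibers, to a disjoint union of $4$-cycles; counting gives $128\cdot 3 = 384$ four-cycles and no eight-cycles, which is the case $i=0$.

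Next I would introduce the variation. Instead of the addition $\bar u+\bar v$ over $GF(2)^6$, one groups the $12$ coordinates into $6$ pairs and, on some $i$ of these pairs, uses the $\mathbb Z_4$ addition (identifying a pair of bits with an element of $\mathbb Z_4$ via Gray-map-type coordinates), while on the remaining $6-i$ pairs one keeps the ordinary $\mathbb Z_2\times\mathbb Z_2$ addition. The key point is that a single edge of the defining matching in $Q_6$ sits in some coordinate direction $j$; if that direction belongs to a pair carrying $\mathbb Z_4$ structure, the edge lifts to $8$-cycles rather than $4$-cycles, because the fiber over that coordinate is now a $4$-cycle rather than two disjoint edges. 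By choosing the partition into pairs so that exactly $i$ of the matching directions fall on $\mathbb Z_4$-pairs — which is possible for each $i\in\{0,1,2,3\}$ since the matching uses (after a suitable description) enough distinct directions — one gets a partition whose first cell induces $64\cdot i$ eight-cycles and $128(3-i)$ four-cycles. The two checks needed here are: (a) the modified construction still yields an equitable partition with quotient matrix $[[2,10],[6,6]]$ — this follows from the fact that the local neighborhood count at each vertex is unchanged by replacing $\mathbb Z_2^2$-addition by $\mathbb Z_4$-addition on a coordinate pair, since both are regular abelian group structures on the $4$-element fiber; and (b) the cycle-length bookkeeping, which is the elementary computation that a $4$-cycle in the "small" structure times a matching edge gives $4$-cycles, while a $4$-cycle times a $4$-cycle-type fiber gives $8$-cycles.

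The main obstacle I anticipate is verifying claim (a) cleanly — namely that the $\mathbb Z_4$-twisted doubling genuinely produces an equitable partition and not merely something with the right degrees inside the first cell. The safest route is to go through the associated eigenfunction: the $\{5,-3\}$-valued (or here $\{?,?\}$-valued, scaled) function on $Q_6$ is a $\lambda$-eigenfunction of $Q_6$, and under the doubling map $\bar x\mapsto$ (its value on the fiber) the lift is an eigenfunction of $Q_{12}$ of the appropriate eigenvalue provided the fiber-collapsing map is a graph homomorphism — and this holds for both the $\mathbb Z_2^2$ and $\mathbb Z_4$ fiber structures because in each case the $4$-element fiber, together with the "move along a pair of coordinates" adjacency, is exactly a connected $2$-regular graph, i.e.\ a $4$-cycle, which covers the single vertex in the quotient the same way. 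Thus the quotient-matrix computation reduces to the known $n=6$ case multiplied by the constant-degree contribution of the fiber, and equitability is preserved. The remaining details — fixing an explicit Tarannikov partition in $Q_6$, listing the directions of its induced matching, and choosing the pairing of the $12$ coordinates realizing each $i$ — are routine once this homomorphism-lifting principle is in place.
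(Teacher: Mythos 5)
Your overall strategy coincides with the paper's: double an equitable partition $(P_0,P_1)$ of $Q_6$ with quotient matrix $[[1,5],[3,3]]$ via $D_0=\{(\bar x,\bar y)\mid \bar x\oplus\bar y\in P_0\}$ and vary the abelian group operation $\oplus$ between $\mathbb{Z}_2$ and $\mathbb{Z}_4$ (Gray-map) factors; your case $i=0$ is correct. The gap is in where you place the $\mathbb{Z}_4$ factors, and it is not a presentational slip. You group the twelve coordinates of $Q_{12}$ into six pairs and impose $\mathbb{Z}_4$ on $i$ of them. But a $\mathbb{Z}_4$ factor must live inside the group $A$ of order $64$ whose operation defines $\oplus$, so it consumes \emph{two} coordinates of the base $Q_6$ --- hence two coordinates of $\bar x$ and the corresponding two of $\bar y$, four coordinates of $Q_{12}$ in all. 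The correct twisting pairs the six coordinates of $Q_6$ into three pairs and turns $i\in\{0,1,2,3\}$ of them into $\mathbb{Z}_4$'s (which is precisely why $i$ only runs to $3$). Under the only reading in which ``six pairs'' of $Q_{12}$-coordinates makes sense (pairing coordinate $j$ of $\bar x$ with coordinate $j$ of $\bar y$), there is no nontrivial modification available: the mod-$2$ reduction of the Gray value of $(x_j,y_j)$ equals $x_j+y_j$, so your map collapses to the plain doubling and yields only $4$-cycles.

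Your mechanism for the $8$-cycles is also not the right one. In the plain doubling the preimage of an edge of $P_0$ is already a union of $4$-cycles (not ``two disjoint edges''); in the twisted doubling, an edge of $P_0$ whose direction lies in a $\mathbb{Z}_4$-pair lifts to $8$-cycles because closing a lifted walk forces the $\mathbb{Z}_4$-components of $\bar x$ and of $\bar y$ to each traverse a full period: four $+1$ steps on $\bar x$ interleaved with four $-1$ steps on $\bar y$ (the paper writes this $8$-cycle out explicitly). The bookkeeping then needs repair: with $i$ of the three pairs twisted, $2i$ of the six directions are affected; $P_0$ consists of two edges in each direction, and each edge lifts to $128$ vertices forming $32$ four-cycles or $16$ eight-cycles, giving $2i\cdot 2\cdot 16=64i$ eight-cycles and $(6-2i)\cdot 2\cdot 32=128(3-i)$ four-cycles. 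Your count ``exactly $i$ of the matching directions fall on $\mathbb{Z}_4$-pairs'' would produce only $32i$ eight-cycles. Your concern (a) about equitability is resolved the way you suspect: flipping any single bit of $(\bar x,\bar y)$ changes $\bar x\oplus\bar y$ by $\pm1$ in one $\mathbb{Z}_4$ factor (or by the generator of one $\mathbb{Z}_2$ factor), hence moves it to an adjacent vertex of $Q_6$, and the twelve bit-flips cover the six neighbours exactly two-to-one, so the quotient matrix doubles exactly as in the untwisted case.
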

\begin{proof}
(i) We start with $i=0$ and construct a required partition $(D_0,D_1)$ using the doubling construction
\cite[Proposition~1(c)]{FDF:PerfCol}: if $(P_0,P_1)$ is an equitable partition 
 of $Q_{6}$ with  quotient matrix $[[1,5],[3,3]]$,
 then $(D_0,D_1)$ is defined by
 $$ D_i= \{(\bar x, \bar y)\mid \bar x+ \bar y \in P_i\}. $$
 Now consider an arbitrary vertex $(\bar x, \bar y)$ from $D_0$. 
 Let $\bar z =  \bar x+ \bar y\in P_0$, and let 
 $\bar z +\bar e_j$ be the only neighbor of $\bar z$ from $P_0$.
 Then $(\bar x, \bar y)$, 
 $(\bar x+\bar e_j, \bar y)$, 
 $(\bar x+\bar e_j, \bar y+\bar e_j)$, 
 $(\bar x, \bar y+\bar e_j)$ belong to $D_0$ and form a $4$-cycle. 
 So, every element of $D_0$ lies in a cycle of length $4$ with elements from $D_0$.
 
 (ii) Let $i=3$. Again, we  start with the partition $(P_0,P_1)$
 and use the same construction but with different addition:
 $$ D_i= \{(\bar x, \bar y)\mid \bar x\oplus \bar y \in P_i\}. $$
 Here, the sum
 of $(x_1,\ldots,x_6) \oplus(y_1,\ldots,y_6)$ is defined by pairs of coordinates:
 $(x_{2j-1},x_{2j})\oplus(y_{2j-1},y_{2j}) = \phi(\phi^{-1}(x_{2j-1},x_{2j})+\phi^{-1}(y_{2j-1},y_{2j}))$, 
 where $\phi$: $0 \to 00$, $1\to 01$, $2\to 11$, $3\to 10$
is the \emph{Gray map} from $\mathbb Z_4=\mathbb Z/4 \mathbb Z$ to GF$(2)^2 $.
It is straightforward that the construction doubled the parameters of the 
quotients matrices for this modification as well as in the case of usual addition.
Again, consider an arbitrary vertex $(\bar x, \bar y)$ such that 
 $\bar z =  \bar x+ \bar y$  and  
 $\bar z +\bar e_j$ are from from $P_0$.
 We see that $(\bar x, \bar y)$, 
 $(\bar x\oplus\bar e_j, \bar y)$, 
 $(\bar x\oplus\bar e_j, \bar y\ominus\bar e_j)$, 
 $(\bar x\oplus\bar e_j\oplus\bar e_j, \bar y\ominus\bar e_j)$, 
 $(\bar x\oplus\bar e_j\oplus\bar e_j, \bar y\ominus\bar e_j\ominus\bar e_j)$, 
 $(\bar x\ominus\bar e_j, \bar y\ominus\bar e_j\ominus\bar e_j)$, 
 $(\bar x\ominus\bar e_j, \bar y\oplus\bar e_j)$, 
 $(\bar x, \bar y\oplus\bar e_j)$ belong to $D_0$ and form a $8$-cycle. So, every element of $D_0$ lies in a cycle of length $8$ with elements from $D_0$.
 
 (iii) For arbitrary $i$, we use the same construction with the ``mixed'' addition, 
 the $\mathbb Z_4$ addition in the first $2i$ coordinates and the usual addition in the remaining ones.
The first cell $P_0$ of $(P_0,P_1)$ consists of $12$ edges, $2$ edges of each direction. 
The edges of the first $2i$ directions correspond to $8$-cycles in $D_0$, 
while the remaining edges
correspond to $4$-cycles. Counting the number of cycles is straightforward.
 \end{proof}


\subsection
[{[[0,13],[3,10]]} and OA(1536,13,2,7)]
{$[[0,13],[3,10]]$ and OA$(1536,13,2,7)$}
\label{s:013310}

The parameters of orthogonal arrays
OA$(1536,13,2,7)$ 
lie on the Bierbrauer--Friedman 
bound~\eqref{eq:bier}; 
hence, such arrays are simple.
Moreover, as was observed 
in \cite{Potapov:2010,Pot:2012:color},
any array on this bound corresponds 
to an equitable $2$-partition with 
the first coefficient 
of the quotient matrix being $0$. 
It is straightforward to see that 
the quotient matrix corresponding to 
OA$(1536,13,2,7)$ is
$[[0,b],[c,d]]=[[0,13],[3,10]]$ 
(indeed,
$0+b=c+d=13$ and $c:(b+c)=1536:2^{13}$);
equitable partitions with this quotient 
matrix are known to exist
\cite[Proposition~2]{FDF:PerfCol}
and moreover, the recent classification result
\cite{Kro:OA13} says that they are all equivalent.
By the argument similar to Remark~\ref{rem:3to2},
the ``projection'' of such a partition gives 
an equitable partition of $Q_{12}$ 
with quotient matrix $[[2,10],[6,6]]$. 
It occurs that only $3$ (of more than $80$)
inequivalent equitable partitions with quotient 
matrix $[[2,10],[6,6]]$ are related 
with OA$(1536,13,2,7)$ in such a way.
Further studying of the exceptional
properties of these three partitions
can potentially give a tip how to construct
other orthogonal arrays
attaining bound \eqref{eq:bier}
from equitable partitions.
For example, 
putative OA$(7\cdot 2^{20},25,2,15)$ 
are equivalent to putative 
equitable partitions with quotient 
matrix $[[0,25],[7,18]]$
and related 
to equitable partitions with quotient 
matrix $[[6,18],[14,10]]$,
which are known to exist
(this matrix is a multiple of $[[3,9],[7,5]]$, 
considered in the current paper);
so, one can try to construct 
OA$(7\cdot 2^{20},25,2,15)$
starting from $2\times [[3,9],[7,5]]$.

\appendix\section*{Appendix}\label{s:appendix}

\def\Aut{\mathrm{Aut}}
\def\Ker{\mathrm{Ker}}
\def\Rpr{\mathrm{Repr}}
\small
Below we list all $16$ inequivalent equitable partitions $(P_0,P_1)$ with quotient matrix $[[0,12],[4,8]]$. The parameters are listed in the following order: rank, 
i.e., the dimension of the affine span of $P_0$ ($10$, $11$, or $12$); the order of the automorphism group,
i.e., of the stabilizer of $P_0$ in $\Aut(Q_{12})$; 
the orbit sizes, for $P_0$, 
then for $P_1$; 
the subspace ${\Ker}$ (the ``kernel'', given by a basis) and a set ${\Rpr}$ (the set of representatives of cosets of the kernel) such that
$P_0=\{k+r|k\in{\Ker}, r \in{\Rpr}\}$ (the kernel ${\Ker}$ is the maximal subspace for which such decomposition is possible). The binary words of length $12$ are represented by hexadecimal numbers, e.g. 
$\mathrm{0a1}=0000\,1010\,0001$.
\begin{enumerate}
 \item Rank: 10, $|\Aut|=84934656$, orbits: $1024$; $3072$;\\
$\Ker$: $\langle$003, 005, 009, 030, 050, 090, 300, 500, 900, 111$\rangle$, \\
$\Rpr$:  \{000\}.

\item Rank: 11, $|\Aut|=1179648$, orbits: $1024$; $1024,2048$;\\
$\Ker$: $\langle$300, 500, 900, 111, 222, 444, 888, 00f$\rangle$, \\
$\Rpr$:  \{000, 003, 005, 081\}.

\item  Rank: 11, $|\Aut|=393216$, orbits: $1024$; $1024,2048$;\\
$\Ker$: $\langle$300, 500, 900, 111, 222, 444, 888, 003$\rangle$,\\
$\Rpr$:  \{000, 005, 009, 048\}.

\item  Rank: 11, $|\Aut|=147456$, orbits: $2 {\times} 128,768$; $ 2 {\times} 128,768,2 {\times} 1024$;\\
$\Ker$: $\langle$300, 500, 900, 111, 222, 444, 888$\rangle$,\\
$\Rpr$:  \{000, 003, 005, 006, 00a, 00c, 00f, 018\}.

\item  Rank: 11, $|\Aut|=49152$, orbits: $2 {\times} 512$; $ 2 {\times} 6,3 {\times} 512,1024$;\\
$\Ker$: $\langle$900, c00, 300, 444, 222, 099$\rangle$,
\\$\Rpr$:  \{000, 003, 005, 006, 00a, 00c, 017, 018, 030, 03c, 04b, 050, 0a0, 0c0, 188, 809\}.

\item  Rank: 11, $|\Aut|=24576$, orbits: $2 {\times} 512$; $ 2 {\times} 512,2 {\times} 1024$;\\
$\Ker$: $\langle$900, 300, 500, 144, 111, 0aa$\rangle$, 
\\$\Rpr$:  \{000, 003, 005, 006, 00a, 00c, 018, 01e, 027, 030, 060, 081, 096, 0c0, 488, 828\}.

\item  Rank: 11, $|\Aut|=196608$, orbits: $2 {\times}512$; $ 2 {\times}512,2048 $;\\
$\Ker$: $\langle$900, c00, 300, 033, 066, 0cc$\rangle$, 
\\$\Rpr$:  \{000, 003, 006, 00c, 012, 018, 048, 069, 224, 428, 4e1, 805, 809, 80a, 811, 814\}.

\item  Rank: 11, $|\Aut|=9216$,  orbits: $64, 3 {\times} 192, 384$; $ 64, 128, 3 {\times} 192, 6 {\times} 384$;\\
$\Ker$: $\langle$900, 300, 500, 144, 4bb$\rangle$,
\\$\Rpr$:  \{000, 003, 005, 006, 00a, 00c, 017, 018, 02e, 030, 035, 03c, 04b, 050, 059, 05a, 
\\\phantom{$\Rpr$:  \{}060, 069, 072, 081, 09c, 0a0, 0c0, 809, 811, 812, 821, 822, 828, 882, 888, 890\}.

\item  Rank: 11, $|\Aut|=24576$, orbits: $2 {\times} 256,512$; $ 2 {\times} 256,3 {\times} 1024$;\\
$\Ker$: $\langle$900, 300, 500, 0aa, 055$\rangle$,
\\$\Rpr$:  \{000, 003, 005, 006, 00a, 00c, 018, 027, 030, 036, 03c, 060, 06c, 081, 0b1, 0c0, 
\\\phantom{$\Rpr$:  \{}166, 2b4, 40f, 809, 811, 812, 814, 821, 822, 824, 828, 82d, 842, 848, 884, 890\}.

\item  Rank: 11, $|\Aut|=147456$, orbits: $1024$; $256,768,2 {\times} 1024$;\\
$\Ker$: $\langle$900, 300, 500, 847, 1b8$\rangle$, 
\\$\Rpr$:  \{000, 003, 005, 006, 00c, 018, 01b, 022, 02b, 02d, 030, 035, 048, 059, 05a, 060, 
\\\phantom{$\Rpr$:  \{}069, 071, 081, 08b, 090, 0c0, 809, 80a, 811, 812, 814, 821, 824, 850, 882, 884\}.

\item  Rank: 11, $|\Aut|=147456$, orbits: $256,768$; $ 256,768,2048$;\\
$\Ker$: $\langle$900, 300, 500, 0aa, 055$\rangle$,
\\$\Rpr$:  \{000, 003, 006, 00c, 00f, 012, 018, 021, 030, 036, 039, 048, 060, 081, 084, 0c0, 
\\\phantom{$\Rpr$:  \{}1b1, 21e, 2cc, 472, 496, 805, 809, 80a, 811, 814, 822, 824, 828, 82d, 842, 890\}.

\item  Rank: 11, $|\Aut|=18432$, orbits: $256, 2 {\times} 384$; $ 2 {\times} 256, 2 {\times} 384, 2 {\times} 768 $;\\
$\Ker$: $\langle$900, c00, 300, fff$\rangle$,
\\$\Rpr$:  \{000, 003, 006, 00c, 00f, 011, 017, 018, 028, 02b, 02d, 030, 035, 036, 03a, 044, 
\\\phantom{$\Rpr$:  \{}04b, 04d, 053, 056, 059, 05a, 05c, 060, 063, 066, 06a, 071, 081, 082, 09a, 0c0, 
\\\phantom{$\Rpr$:  \{}155, 178, 1b1, 1c6, 247, 2cc, 41b, 46c, 472, 805, 809, 80a, 812, 814, 81d, 81e, 
\\\phantom{$\Rpr$:  \{}821, 822, 824, 82e, 841, 842, 848, 850, 884, 888, 88b, 890, 896, 8a0, 8c3, 8d8\}.

\item  Rank: 11, $|\Aut|=6144$, orbits: $4 {\times} 128,2 {\times} 256$; $ 4 {\times} 64,6 {\times} 128,8 {\times} 256$;\\
$\Ker$: $\langle$900, c00, 300, fff$\rangle$,
\\$\Rpr$:  \{000, 003, 006, 00c, 00f, 011, 017, 018, 028, 02b, 02d, 030, 035, 036, 03a, 044, 
\\\phantom{$\Rpr$:  \{}04b, 04e, 053, 055, 059, 05a, 05c, 060, 063, 069, 072, 081, 082, 099, 09a, 0c0, 
\\\phantom{$\Rpr$:  \{}133, 1e4, 247, 256, 26a, 278, 427, 439, 46c, 4c3, 805, 809, 80a, 812, 814, 81d, 
\\\phantom{$\Rpr$:  \{}81e, 821, 822, 824, 82e, 841, 842, 848, 84d, 850, 871, 874, 884, 888, 890, 8a0\}.

\item  Rank: 11, $|\Aut|=18432$, orbits: $256,768$; $ 3 {\times} 256,3 {\times} 768$;\\
$\Ker$: $\langle$900, c00, 300, fff$\rangle$,
\\$\Rpr$:  \{000, 003, 006, 00c, 00f, 011, 018, 01d, 027, 028, 02d, 030, 033, 036, 03a, 044, 
\\\phantom{$\Rpr$:  \{}04b, 04e, 053, 055, 056, 059, 05a, 060, 063, 069, 06a, 081, 082, 08b, 09a, 0c0, 
\\\phantom{$\Rpr$:  \{}199, 21b, 21e, 22b, 235, 23c, 247, 278, 2a3, 46c, 472, 4b2, 805, 809, 80a, 812, 
\\\phantom{$\Rpr$:  \{}814, 817, 821, 822, 824, 82e, 841, 842, 848, 850, 871, 884, 888, 890, 8a0, 8c6\}.

\item  Rank: 12, $|\Aut|=32768$, orbits: $1024$; $1024,2048$;\\
$\Ker$: 
$\langle$111, 222, 444, 888, 003, 840$\rangle$,
\\$\Rpr$:  \{000, 04c, 009, 005, 054, 090, 030, 060, 051, 066, 06a, 01d, 03a, 036, 02c, 07c\}.

\item  Rank: 12, $|\Aut|=49152$, orbits: $1024$; $3072$;\\
$\Ker$: $\langle$00f, 0f0, f00, 333$\rangle$,\\
$\Rpr$: \{000, 
005, 050, 500,
550, 505, 055,
555,
021, 028, 041, 048,
210, 280, 410, 480,
\\\phantom{$\Rpr$:  \{}%
102, 802, 104, 804,
126, 146, 826, 846,
261, 461, 268, 468,
612, 614, 682, 684,
\\\phantom{$\Rpr$:  \{}%
016, 086, 206, 406,
160, 860, 062, 064,
601, 608, 620, 640,
111, 118, 181, 811,
\\\phantom{$\Rpr$:  \{}%
881, 818, 188, 888,
013, 083, 130, 830, 301, 308,
516, 586, 165, 865, 651, 658\}.
\end{enumerate}

\subsection*{Acknowledgements}
The authors thank Evgeny Bespalov, Vladimir Potapov, and Olli Pottonen for useful discussions and the referees for helpful comments.

 
\providecommand\href[2]{#2} \providecommand\url[1]{\href{#1}{#1}}

\end{document}